\newtheorem{theorem}{Theorem}[section]
\newtheorem{lemma}[theorem]{Lemma}
\newtheorem{corollary}[theorem]{Corollary}
\newtheorem*{conj}{Conjecture}
\theoremstyle{definition}
\newtheorem{definition}[theorem]{Definition}
\theoremstyle{remark}
\newtheorem{remark}[theorem]{Remark}
\numberwithin{equation}{section}
\def\intslash{\rlap{\kern  .32em $\mspace {.5mu}\backslash$ }\int}
\def\qsl{{\rlap{\kern  .32em $\mspace {.5mu}\backslash$ }\int_{Q_x}}}
\def\B{{\text{\rm B}}}
\def\R{{\mathbb R}}
\def\Z{{\mathbb Z}}
\def\C{{\mathbb C}}
\def\M{{\mathcal M}}
\def\N{{\mathcal N}}
\def\F{{\mathcal F}}
\def\A{{\mathcal A}}
\def\tens{\overline{\otimes}}
\def\T{\mathbb T}
\def\wh{\widehat}
\def\S{\mathcal S}
\def\Re{\text{Re}}
\def\Im{\text{Im}}
\def\pari{\partial}
\def\Ga{\Gamma}
\def\supp{{\text{\rm supp}}}
\def\inn#1#2{\langle#1,#2\rangle}
\def\rta{\rightarrow}
\def\lc{\lesssim}
\def\alp{\alpha}
\def\del{\delta}             
\def\eps{\varepsilon}
\def\tet{\theta}
\def\lam{\lambda}            \def\Lam{\Lambda}
\def\si{\sigma}              
\def\vphi{\varphi}
\def\om{\omega}              \def\Om{\Omega}
\def\fr{\frac}
\def\ups{\upsilon}
\newcommand{\Be}{\begin{equation}}
\newcommand{\Ee}{\end{equation}}
\newcommand{\Bes}{\begin{equation*}}
\newcommand{\Ees}{\end{equation*}}
\newcommand{\Bsp}{\begin{split}}
\newcommand{\Esp}{\end{split}}
\newcommand{\Bm}{\begin{multline}}
\newcommand{\Em}{\end{multline}}
\newcommand{\Bea}{\begin{eqnarray}}
\newcommand{\Eea}{\end{eqnarray}}
\newcommand{\Beas}{\begin{eqnarray*}}
\newcommand{\Eeas}{\end{eqnarray*}}
\newcommand{\Benu}{\begin{enumerate}}
\newcommand{\Eenu}{\end{enumerate}}
\newcommand{\Bi}{\begin{itemize}}
\newcommand{\Ei}{\end{itemize}}
\begin{document}

\title[Noncommutative Bochner-Riesz means]{Sharp estimates of noncommutative Bochner-Riesz means on two-dimensional quantum tori}

\author{Xudong Lai}
\address{Xudong Lai: Institute for Advanced Study in Mathematics, Harbin Institute of Technology, Harbin, 150001, People's Republic of China}
\email{xudonglai@hit.edu.cn}
\thanks{This work was supported by National Natural Science Foundation of China (No. 11801118, No. 12071098) and China Postdoctoral Science Foundation (No. 2017M621253, No. 2018T110279).}

\subjclass[2010]{Primary 46L52, 46L51, Secondary 46B15, 42B25}



\keywords{Noncommutative $L_p$ space, Bochner-Riesz means, Quantum tori, Kakeya maximal operator, Fourier series, Square function,  Rotational algebras}

\begin{abstract}
In this paper, we establish the full $L_p$ boundedness of noncommutative Bochner-Riesz means on two-dimensional quantum tori, which completely resolves an open problem raised in \cite{CXY13} in the sense of the $L_p$ convergence for two dimensions. The main ingredients are sharp estimates of noncommutative Kakeya maximal functions and geometric estimates in the plane.
We make the most of noncommutative theories of maximal/square functions, together with microlocal decompositions in both proofs of sharper estimates of Kakeya maximal functions and Bochner-Riesz means.
\end{abstract}

\maketitle

\section{Introduction and main results}
Inspired by operator algebras, harmonic analysis, noncommutative geometry and quantum probability, noncommutative harmonic analysis has rapidly developed recently (see e.g. \cite{Cad18,CXY13,GJP19,GJP17,HLW21,JMP14,JMP18,MSX20,
Mei07,MP09,Par09,XXX16,XXY18}).
The purpose of this paper is to study the noncommutative Bochner-Riesz means.
We start by introducing the classical Bochner-Riesz means in Euclidean spaces.

It is well-known that the boundedness and convergence of Bochner-Riesz means are among the most important problems in harmonic analysis. The study of Bochner-Riesz means can also be regarded as making precise the sense in which the Fourier inversion formula holds.
Recall that the Bochner-Riesz means on the usual torus $\T^d$ are defined by
\Be\label{e:21BRtorus}
\B_R^\lam(f)(x)=\sum_{m\in\Z^d}(1-\fr{|m|^2}{R^2})^\lam_+\wh{f}(m)e^{2\pi i\inn{m}{x}}
\Ee
where $\lam\geq0$, $R>0$, $(x)_+=\max\{x,0\}$ and $\wh{f}(m)=\int_{\T^d}f(x)e^{-2\pi i\inn{m}{x}}dx$.
The central topic of Bochner-Riesz means is to  seek the optimal range of $\lam$ such that $\B_R^\lam(f)$ converges to $f$ in some sense.
In particular, the problem of the $L_p$ convergence  turns out to  show  $(1-\fr{|\cdot|^2}{R^2})_+^\lam:\Z^d\rta\R$ is a uniform $L_p$  Fourier multiplier in $R>0$, which can be formulated as the so called Bochner-Riesz conjecture as follows (see e.g. \cite{Ste93}).
\begin{conj}
Suppose $\lam>0$ and $\fr{2d}{d+1+2\lam}<p<\fr{2d}{d-1-2\lam}$. Then we have
\Bes
\sup_{R>0}\|\B_R^\lam(f)\|_{L_p(\T^d)}\lc\|f\|_{L_p(\T^d)}.
\Ees
\end{conj}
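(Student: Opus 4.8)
\noi\textbf{Proof proposal (aiming at $d=2$).} The conjecture is open for $d\geq3$, so I would aim at the case $d=2$ that is relevant here: the plan is to transfer the problem to $\R^d$ and then run the Córdoba--Fefferman square function and Kakeya maximal function argument.

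\emph{Step 1 (transference; reduction to a critical exponent).} First, since $(1-|\xi|^2)^\lam_+$ is continuous on $\R^d$ for $\lam>0$, I would invoke de Leeuw's transference theorem (applied to the dilates $(1-|\xi|^2/R^2)^\lam_+$ and their restrictions to the lattice) to get
\Bes
\sup_{R>0}\Norm{\B^\lam_R f}_{L_p(\T^d)}\lc\Norm{S^\lam}_{L_p(\R^d)\to L_p(\R^d)}\Norm{f}_{L_p(\T^d)},
\Ees
where $S^\lam$ is the Euclidean Bochner--Riesz operator with symbol $(1-|\xi|^2)^\lam_+$; it then suffices to bound $S^\lam$ on $L_p(\R^d)$ in the conjectured range. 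Since $S^\lam$ is self-adjoint with real symbol, is bounded on $L_2$ for all $\lam\geq0$, and has an integrable convolution kernel (hence is bounded on $L_\infty$) for $\lam>\fr{d-1}{2}$, interpolating these with a bound on $L_{2d/(d-1)}$ together with duality recovers the full range; so for $d=2$ I would reduce everything to proving $S^\lam\colon L_4(\R^2)\to L_4(\R^2)$ for every $\lam>0$. Via a Littlewood--Paley decomposition $(1-|\xi|^2)^\lam_+=\sum_{j\geq0}m_j$, with $m_j$ an $O(2^{-j\lam})$ bump adapted to the annulus of thickness $2^{-j}$ about the unit circle, this reduces to the single-scale estimate $\Norm{R_\delta}_{L_4\to L_4}\lc_\eps\delta^{-\eps}$ for the multiplier $R_\delta$ whose symbol is a smooth bump on the $\delta$-neighborhood of the unit circle.

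\emph{Step 2 (cap decomposition; square function; Kakeya).} Next I would partition the $\delta$-annulus into $\sim\delta^{-(d-1)/2}$ essentially disjoint caps $\kappa$ of dimensions $\delta\times(\delta^{1/2})^{d-1}$, with associated pieces $R_{\delta,\kappa}$; each $R_{\delta,\kappa}f$ is, up to rapidly decaying tails, constant on the dual plates $P_\kappa$, boxes of dimensions $\delta^{-1}\times(\delta^{-1/2})^{d-1}$ with long axis normal to $\kappa$. Two ingredients would then be combined: (i) Córdoba's square-function inequality, which for $p\geq2$ gives
\Bes
\Norm{R_\delta f}_{L_p}\lc\big(\log\tfrac1\delta\big)^{1/2}\Big\|\Big(\sum_\kappa|R_{\delta,\kappa}f|^2\Big)^{1/2}\Big\|_{L_p};
\Ees
and (ii) the domination of $\sum_\kappa|R_{\delta,\kappa}f|^2$ by the Kakeya maximal function $\mathcal K^*_\delta$ over the directions of the $P_\kappa$, giving $\big\|(\sum_\kappa|R_{\delta,\kappa}f|^2)^{1/2}\big\|_{L_p}\lc\Norm{\mathcal K^*_\delta}_{L_{p/2}\to L_{p/2}}^{1/2}\Norm{f}_{L_p}$ up to lower-order factors.

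\emph{Step 3 (planar input; conclusion).} For $d=2$ and $p=4$ only the $L_2$ bound for $\mathcal K^*_\delta$ enters, and here I would use the elementary fact that two $\delta^{-1}\times\delta^{-1/2}$ rectangles whose axes are $\theta$-separated overlap in area $\lc|P|/(\delta^{-1/2}\theta)$, which gives Córdoba's bound $\Norm{\mathcal K^*_\delta}_{L_2\to L_2}\lc(\log\tfrac1\delta)^{1/2}$ over $\sim\delta^{-1/2}$ separated directions. Inserting this into Step 2 yields $\Norm{R_\delta}_{L_4\to L_4}\lc\log\tfrac1\delta$, hence $S^\lam$ on $L_4(\R^2)$ for every $\lam>0$; combined with Step 1 this gives $\sup_{R>0}\Norm{\B^\lam_R f}_{L_p(\T^2)}\lc\Norm{f}_{L_p(\T^2)}$ throughout $\fr{4}{3+2\lam}<p<\fr{4}{1-2\lam}$.

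\emph{Main obstacle.} I expect the hard part to be exactly the two genuinely two-dimensional inputs: the planar Kakeya maximal estimate in Step 3 and the square-function inequality (i) at $p=4$. In $d\geq3$ these become the open Kakeya maximal and square-function conjectures, and even granting them the scheme would deliver only a partial range of $\lam$ --- which is why the statement is still only a conjecture for $d\geq3$. In the operator-valued framework this paper is heading toward, the further difficulty will be that the pointwise domination in (ii) and the almost-orthogonality in (i) must be recast as noncommutative maximal and square-function inequalities, which is the point where the noncommutative machinery becomes essential.
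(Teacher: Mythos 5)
Your outline is essentially the commutative skeleton of exactly the argument this paper runs: transference to $\R^2$ (Theorem~\ref{t:21transft}), dyadic and then microlocal decomposition of the symbol, a square-function bound at $p=4$, and a Kakeya maximal function estimate with a logarithmic constant. You also correctly note that the full conjecture is open for $d\geq3$ and that only the $d=2$ commutative case is accessible, and you correctly identify that the real work in this paper is transplanting (i) and (ii) of your Step~2 into the operator-valued setting. Where your proposal stops short is precisely there, and the gaps are not cosmetic. First, in the noncommutative $L_4$ norm one has $\|a\|_4^4=\|a^*a\|_2^2$ rather than $\|aa\|_2^2$, and this forces the paper to split the index pairs $(l,l')$ into close and far, giving \emph{both} a column and a row square function (see \eqref{e:21Tklsqu}); moreover the finite-overlap geometric fact one needs is for $\tilde\Gamma_{k,l}+\Gamma_{k,l'}$ (Lemma~\ref{l:21Geo}, only with $|l-l'|>10^3$), not the classical $\Gamma_{k,l}+\Gamma_{k,l'}$; without this asymmetry the overlap count blows up near the origin. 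Second, your pointwise domination ``$\sum_\kappa|R_{\delta,\kappa}f|^2\lc\mathcal K^*_\delta(\cdot)\cdot(\cdots)$'' has no direct operator analogue; the paper instead routes through the $L_{p'}(\M;\ell_1)$--$L_p(\M;\ell_\infty)$ duality of Lemma~\ref{l:21dual} and the convexity inequality \eqref{e:21conv}, and separately needs the noncommutative Littlewood--Paley--Rubio de Francia inequality (Lemma~\ref{t:21Little}) for \eqref{e:21l4}. Third, and most substantively, the Kakeya maximal $L_2$ bound with constant $\log N$ that you cite as ``elementary'' (Córdoba's rectangle-overlap computation, or Strömberg's distribution-function method) does not transfer: distribution functions and pointwise maxima are unavailable, so Theorem~\ref{t:21Kak} has to be proved by a Fourier-side microlocal decomposition combined with noncommutative directional maximal and square-function estimates, following the Nagel--Stein--Wainger/Wainger route rather than the Strömberg one. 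Finally, the endpoint $p=\fr{4}{1-2\lam}$ in the operator range is reached via the noncommutative analytic interpolation theorem of the appendix, which is also not a one-line borrowing from the scalar case.
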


One can also define the Bochner-Riesz means on $\R^d$ by
\Be\label{e:21BRR}
B_R^\lam(f)(x)=\int_{\R^d}\big(1-\tfrac{|\xi|^2}{R^2}\big)^\lam_+\wh{f}(\xi)e^{2\pi i\inn{\xi}{x}}d\xi,
\Ee
where $\wh{f}$ is the Fourier transform of $f$ on $\R^d$.
By the standard transference technique (see e.g. \cite{Gra14C}), the uniform $L_p$ boundedness of $\B_R^\lam$ in $R>0$ on $\T^d$  is equivalent to that of $B_R^\lam$ on $\R^d$. Because of this equivalence, in modern literature, researchers prefer to study the Bochner-Riesz means on $\R^d$.

The study of Bochner-Riesz means originated from S. Bochner \cite{Boc36}. The necessity of the conditions of $\lam$ and $p$ in the Bochner-Riesz conjecture was given by C.  Herz \cite{Her54}. In dimension two, this conjecture  has been completely resolved  by L. Carleson and P. Sj\"olin \cite{CS72}, independently later by C. Fefferman \cite{Fef73}, L. H\"ormander \cite{Hor73} and  A. C\'ordoba \cite{Cor79}.
When dimension $d\geq 3$, the Bochner-Riesz conjecture is still open. We refer to some substantial progress in \cite{BG11,GHI19,Lee04,TV00,TVV98} and the references therein.

Concerning the pointwise convergence of $B_R^\lam(f)$, it is natural to investigate the maximal Bochner-Riesz means defined by $B_*^\lam(f)(x)=\sup_{R>0}|B_R^\lam(f)(x)|$.
It is conjectured that $B_*^\lam$ is bounded on $L_p(\R^d)$ for $\lam>0$ and $\fr{2d-1}{d+2\lam}<p<\fr{2d}{d-1-2\lam}$ (see \cite{Tao02}), where the range of $p\leq 2$ is different from that of the Bochner-Riesz conjecture.
It is clear that the study of $B_*^\lam$ is hander than that of $B^\lam$. Up to now, this maximal Bochner-Riesz conjecture is even open for two dimensions. For some important progress, we refer the reader to \cite{Car83,LW20,Tao02} in the two-dimensional case and \cite{Lee04,Lee18,Ste58} for higher dimensions.

It should be pointed out that the study of Bochner-Riesz means is also quite related to several conjectures in harmonic analysis: Fourier restriction conjecture, local smoothing conjecture, maximal Kakeya function conjecture and Kakeya set conjecture (see e.g. \cite{Tao99}).
To investigate the Bochner-Riesz conjecture, except some fundamental theories of maximal operators, Calder\'on-Zygmund operators, oscillatory integral operators, etc,  researchers in harmonic analysis have invented many new and deep tools: bilinear or multilinear Fourier restriction (see \cite{BG11,Lee04,TVV98}), incident geometry (see  \cite{Bou99,Wol95}), decoupling and polynomial partitioning (see \cite{GHI19,LW20}) in the last two decades.
These new methods not only greatly improve the ranges of $\lam$ and $p$ in the study of Bochner-Riesz means, but also open new promising research directions in harmonic analysis.

On the other hand, many useful theories in harmonic analysis, such as Littlewood-Paley-Stein square functions, Hardy-Littlewood maximal operators, duality of $H_1$-BMO, Calder\'on-Zygmund operators and multiplier operators, have been successfully transferred to the noncommutative setting (see e.g. \cite{Cad18,HLX20,HWW19,Mei07,Md17,Par09,Xu07}).
Motivated by the development of this noncommutative harmonic analysis, the noncommutative Bochner-Riesz means on quantum tori have been investigated partially by Z. Chen, Q. Xu and Z. Yin \cite{CXY13} with limited indexes of $\lam$ and $p$.
Due to the lack of commutativity, the study of noncommutative Bochner-Riesz means seems to be more challenging.
For example, Z. Chen, Q. Xu and Z. Yin \cite{CXY13} established the boundedness of the maximal Bochner-Riesz means on the $L_p$ space over quantum tori for $\lam>(d-1)|\fr12-\fr1p|$, which is an analogue of a classical result by E. M. Stein \cite{Ste58}, but with a much more technical proof.
Compared with the fruitful theories of commutative Bochner-Riesz means, the noncommutative Bochner-Riesz means deserve to be investigated further.
This leads to a natural question that whether we can transfer  the modern and powerful tools mentioned before  into the noncommutative setting and apply them to studying noncommutative Bochner-Riesz means.
Since the study of two-dimensional Bochner-Riesz means is relatively simple (note the Bochner-Riesz conjecture is  resolved in this case), in this paper we focus on two dimensions and our main purpose is to  obtain the full boundedness of noncommutative Bochner-Riesz means on quantum tori by developing a new tool---the noncommutative Kakeya maximal function.

Quantum tori are also known as noncommutative tori or rotational algebras (see \cite{Rie90}).
One can regard quantum tori as analogues of usual tori.
Quantum tori are basic examples in operator algebras (see \cite{Dav96}) and  are interesting objects in noncommutative geometry which have been extensively studied (see e.g. \cite{Con94,Rie90,Var06}).
The research of analysis on quantum tori was started in \cite{Spe92,Wea96,Wea01} and the first systematic work of harmonic analysis on quantum tori was given later in \cite{CXY13}.
For recent work related to quantum tori in the direction of noncommutative analysis, we refer to see \cite{JMP14,JRZ18,MSX19,Ric16,XXX16,XXY18} and the references therein.

To illustrate our main results, we should give the definition of quantum torus.
Suppose that $d\geq 2$, $\tet=(\tet_{k,j})_{1\leq k,j\leq d}$ is a real skew symmetric $d\times d$ matrix.
The $d$-dimensional noncommutative torus $\A_\tet$ is a universal $C^*$-algebra generated by $d$ unitary operators $U_1,\cdots,U_d$ satisfying the following commutation relation:
\Bes
U_kU_j=e^{2\pi i\tet_{k,j}}U_jU_k,\quad 1\leq k,j\leq d.
\Ees
A well-known fact is that $\A_\tet$ admits a faithful tracial state $\tau$.
Define $\T_\tet^d$ the weak $*$-closure of $\A_\tet$ in the GNS representation of $\tau$.
We call $\T_\tet^d$ the $d$-dimensional quantum torus. The state $\tau$ also extends to a normal faithful state on $\T_\tet^d$, which will be denoted again by $\tau$.
Notice that when $\tet=0$, $\A_\tet=C(\T^d)$ and $\T_\tet^d=L_\infty(\T^d)$. Thus quantum torus $\T_\tet^d$ is a deformation of classical torus $\T^d$.
Let $L_p(\T_\tet^d)$ be the noncommutative space associated to pairs $(\T_\tet^d,\tau)$ with the $L_p$ norm given by $\|x\|_{L_p(\T_\tet^d)}=(\tau(|x|^p))^{1/p}$.

In the following, we consider the Bochner-Riesz means on quantum tori which are defined by
\Be\label{e:21BRqtorusi}
\B^\lam_R(f)=\sum_{m\in\Z^d}\big(1-|\tfrac{m}{R}|^2\big)^\lam_+\hat{f}(m)U^m,\quad f\in L_p(\T^d_\tet),
\Ee
where $U=(U_1,\cdots,U_d)$, $U^m=U_1^{m_1}\cdots U_d^{m_d}$ and $\wh{f}(m)=\tau((U^m)^*f)$.
A fundamental problem raised in \cite[Page 762]{CXY13} is that in which sense the Bochner-Riesz means converge back to $f$. In this paper we consider this problem in two-dimensional case and state our main results as follows.

\begin{theorem}\label{t:21BRqtorusi}
Suppose $0<\lam<\infty$ and $\fr{4}{3+2\lam}<p<\fr{4}{1-2\lam}$. Let $\B_R^\lam$ be the Bochner-Riesz means  defined in \eqref{e:21BRqtorusi} for $d=2$. Then we have
\Bes
\sup_{R>0}\|\B_R^\lam(f)\|_{L_p(\T_\tet^2)}\lc\|f\|_{L_p(\T^2_\tet)}.
\Ees
Consequently for $f\in L_p(\T_\tet^2)$, $\B_R^\lam(f)$ converges to $f$ in $L_p(\T_\tet^2)$ as $R\rta\infty$.
\end{theorem}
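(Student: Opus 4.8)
The plan is to reduce the theorem, via transference and interpolation, to a single square-function bound at the exponent $4$ for operator-valued Bochner--Riesz means on $\R^2$, and then to run C\'ordoba's planar scheme in the noncommutative category, the noncommutative Kakeya maximal function playing the role that the Nikodym maximal operator plays classically. By the transference principle of \cite{CXY13} (which, combined with de Leeuw's theorem and a scaling in $R$, moves the problem to $\R^2$), the asserted uniform bound for $\B^\lam_R$ on $L_p(\T^2_\tet)$ follows once one proves that the Euclidean means $B^\lam_1$ are \emph{completely} bounded on $L_p(\R^2)$, that is,
\Bes
\sup_{\M}\big\|B^\lam_1\otimes\mathrm{id}_{\M}\big\|_{L_p(\N)\to L_p(\N)}<\infty,\qquad\N:=L_\infty(\R^2)\ot\M,
\Ees
with the supremum over all von Neumann algebras $\M$ carrying a normal semifinite faithful trace. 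Since $(1-|\xi|^2)^\lam_+$ is real and even, $B^\lam_1$ is formally self-adjoint, so by duality only $2\le p<\fr{4}{1-2\lam}$ must be treated. For $\lam>\fr{1}{2}$ the convolution kernel of $B^\lam_1$ lies in $L_1(\R^2)$, hence $B^\lam_1\otimes\mathrm{id}_{\M}$ is bounded on every $L_p(\N)$; and $\big|(1-|\xi|^2)^z_+\big|\le1$ gives the trivial $L_2$ bound on $\Re z=0$. Interpolating the analytic family $z\mapsto B^z_1$ between these two facts and the $L_4$ bound below (Stein's interpolation theorem for analytic families, in its noncommutative form) recovers the whole range $\lam>0$, $2\le p<\fr{4}{1-2\lam}$. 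Everything is thus reduced to showing
\Bes
\big\|B^\lam_1\otimes\mathrm{id}_{\M}\big\|_{L_4(\N)\to L_4(\N)}\lc_\lam1\qquad\text{for every }\lam>0.
\Ees

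For this $L_4$ bound I would follow C\'ordoba's argument. Write $(1-|\xi|^2)^\lam_+=\sum_{k\ge0}2^{-k\lam}\phi_k(\xi)$ (up to a trivial smooth term), with $\phi_k$ a smooth bump of height $\sim1$ adapted to the annulus $\{1-|\xi|\sim2^{-k}\}$; with $\delta=2^{-k}$ it suffices to prove $\|T_{\phi_k}\otimes\mathrm{id}_{\M}\|_{L_4(\N)\to L_4(\N)}\lc_\eps\delta^{-\eps}$, the resulting series converging for $0<\eps<\lam$. Fix $k$, split the $\delta$-annulus into $\sim\delta^{-1/2}$ caps $\kappa_\nu$ of dimensions $\sim\delta^{1/2}\times\delta$, take a smooth partition $\phi_k=\sum_\nu\phi_{k,\nu}$ subordinate to them, and set $f_\nu=(T_{\phi_{k,\nu}}\otimes\mathrm{id}_{\M})f$. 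Using $\|F\|^2_{L_4(\N)}=\|F^*F\|_{L_2(\N)}$, the fact that $f^*_\nu f_\mu$ has $\R^2$-Fourier support in $\kappa_\mu-\kappa_\nu$, the planar geometric estimate (going back to Fefferman and C\'ordoba) that the sets $\{\kappa_\mu-\kappa_\nu\}$ overlap at most $O(\log\tfrac{1}{\delta})$ times, and Plancherel in the Hilbert space $L_2(\N)$, one obtains
\Bes
\Big\|\sum_\nu f_\nu\Big\|_{L_4(\N)}\lc\big(\log\tfrac{1}{\delta}\big)^{1/4}\Big\|\Big(\sum_\nu f_\nu f^*_\nu\Big)^{1/2}\Big\|_{L_4(\N)}.
\Ees
The matter therefore reduces to the square-function estimate $\big\|(\sum_\nu f_\nu f^*_\nu)^{1/2}\big\|_{L_4(\N)}\lc_\eps\delta^{-\eps}\|f\|_{L_4(\N)}$ for the cap decomposition.

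This last estimate is exactly where the noncommutative Kakeya maximal function comes in, just as C\'ordoba's classical proof uses the $L_2$ bound for the Nikodym maximal operator through the duality between $\delta^{1/2}\times\delta$ caps and $\delta^{-1/2}\times\delta^{-1}$ tubes. Since each $f_\nu$ is a smooth cap-multiple of $f$, the operator-valued Cauchy--Schwarz inequality $\big(\int k\,h\big)\big(\int k\,h\big)^*\le\|k\|_1\int|k|\,hh^*$ dominates $f_\nu f^*_\nu$ pointwise (up to Schwartz tails) by averages of $(\Delta_\nu f)(\Delta_\nu f)^*$ over tubes dual to $\kappa_\nu$, and the sum over $\nu$ of such averages is governed by the noncommutative Kakeya maximal operator $\mathcal K_\delta$. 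I would establish the sharp bound for $\mathcal K_\delta$---acting on positive elements of $L_p(\N)$ and measured, in the $\R^2$-direction, in the Junge--Xu space $L_p(\M;\ell_\infty)$---of the form $\|\mathcal K_\delta\|\lc_\eps\delta^{-\eps}$ (indeed $\lc\log\tfrac{1}{\delta}$ at $p=2$), by transplanting C\'ordoba's bush/microlocal argument to the noncommutative setting, using the noncommutative Hardy--Littlewood maximal inequalities and noncommutative square-function theory to sum the tube contributions and the same planar covering lemmas to control the geometry. Feeding this back gives the cap square-function estimate, hence the $L_4$ bound, hence---via the first step---the uniform boundedness of $\B^\lam_R$. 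The $L_p$-convergence $\B^\lam_R(f)\to f$ then follows from this uniform boundedness together with the obvious convergence on the dense set of finite linear combinations of the $U^m$.

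The genuinely new difficulty is the Kakeya step: formulating a workable Kakeya maximal function over a von Neumann algebra (pointwise suprema being meaningless, $L_p(\M;\ell_\infty)$ of Junge and Xu is the right framework), proving it with the \emph{sharp} dependence on $\delta$, and tracking the column/row dichotomy carefully throughout, so that the bilinear expansion $\sum_{\nu,\mu}f^*_\nu f_\mu$ and the square functions it produces fit together. The planar geometry (covering lemmas for sums and differences of caps) is classical and commutative, but interfacing it cleanly with the noncommutative maximal and square-function machinery is where the labor lies.
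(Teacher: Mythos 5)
Your overall strategy matches the paper's (transference to operator-valued Bochner-Riesz on $\R^2$, reduction to $p=4$ by duality and noncommutative Stein interpolation, dyadic annular decomposition followed by $\delta^{1/2}\times\delta$ cap decomposition, and ultimately a sharp noncommutative Kakeya bound $\|\mathcal K_N\|_{L_2\to L_2}\lesssim\log N$). But there is a genuine gap in the $L_4$ step that the paper's proof was designed precisely to circumvent, and it invalidates the way you reach the square-function estimate.

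You write $\|F\|_{L_4(\N)}^2=\|F^*F\|_{L_2(\N)}$, expand $F^*F=\sum_{\nu,\mu}f_\nu^*f_\mu$, note that $\widehat{f_\nu^*f_\mu}$ is supported in $\kappa_\mu-\kappa_\nu$, and then invoke ``the planar geometric estimate that the sets $\{\kappa_\mu-\kappa_\nu\}$ overlap at most $O(\log\frac1\delta)$ times.'' That claim is false. The Fefferman--C\'ordoba overlap estimate concerns the sums $\kappa_\mu+\kappa_\nu$; the differences $\kappa_\mu-\kappa_\nu$ all cluster near the origin when $\mu\approx\nu$ (in particular $0\in\kappa_\nu-\kappa_\nu$ for every $\nu$, so the overlap at $\xi=0$ is $\sim\delta^{-1/2}$, not $\log\frac1\delta$). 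In the scalar case one may freely replace $\bar FF$ by $F\cdot F$, since $|F\cdot F|=|F|^2$, and that is why the classical scheme works with $\kappa_\mu+\kappa_\nu$. In the von Neumann algebra setting this substitution is unavailable: one must work with $F^*F$, and the accumulation near $0$ is real. The paper handles exactly this point (see Remark 4.6): the bilinear sum is split into the near-diagonal part $|l-l'|\le 10^3$, which is treated via the vector-valued H\"older inequality and yields a \emph{column} square function $\|(\sum_l|T_{k,l}f|^2)^{1/2}\|_{L_4}$, and the off-diagonal part $|l-l'|>10^3$, where the sets $\tilde\Gamma_{k,l}+\Gamma_{k,l'}$ (i.e.\ your $\kappa_{l'}-\kappa_l$) are genuinely finitely overlapped (Lemma 4.5, with bound $O(1)$, not $O(\log\frac1\delta)$), and which produces a \emph{row} square function $\|(\sum_l|T_{k,l}f^*|^2)^{1/2}\|_{L_4}$. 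You end up with only the row square function and no mechanism to control the diagonal contribution.

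This omission is not cosmetic: needing \emph{both} row and column square functions is precisely what noncommutative square-function theory predicts for $p\geq 2$ and is one of the structural novelties of the argument. Once you patch this by inserting the near-diagonal/off-diagonal split and proving the finite-overlap lemma for $\tilde\Gamma_{k,l}+\Gamma_{k,l'}$ with $|l-l'|$ large, the remainder of your sketch (Cauchy--Schwarz to dominate $|T_{k,l}f|^2$ by Kakeya averages, duality between $L_{p'}(\M;\ell_\infty)$ and $L_p(\M;\ell_1)$, the Kakeya bound, and a Littlewood--Paley--Rubio de Francia inequality for the associated strips) lines up with the paper.
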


This theorem is in fact a noncommutative version of the two-dimensional Bochner-Riesz conjecture. Thus we completely resolve an open problem raised in \cite{CXY13} in the sense of the $L_p$ convergence for two dimensions.
In the following, we briefly introduce the strategy used in the proof of Theorem \ref{t:21BRqtorusi}.

Notice that $\B_R^\lam(f)$ is fully noncommutative and its analysis seems to be rather difficult. Nevertheless there is a clever trick that transfers the problem of multiplier operator on quantum tori to the operator-valued setting on usual tori (see \cite{CXY13}). Hence using this method, the $L_p$ boundedness of Bochner-Riesz means on quantum tori can be reduced to that of the operator-valued Bochner-Riesz means on usual tori $\T^d$ (see Theorem \ref{t:21BRtorus}).
Next we use the noncommutative transference of multiplier (see Theorem \ref{t:21transft}), we can transfer the study of the operator-valued Bochner-Riesz means on $\T^d$ to that on $\R^d$, in which case we can do analysis based on some known noncommutative theories of harmonic analysis.
However to establish the full boundedness of two-dimensional operator-valued Bochner-Riesz means on $\R^2$, the previous noncommutative theories may  not be sufficient and some new tools in harmonic analysis related to the geometry of Euclidean spaces should be brought in.

Our main new tool is  the noncommutative Kakeya maximal function.  Define the  Kakeya average operator by
\Bes
\mathcal{K}_R f(x)=|R|^{-1}\int_{R}f(x-y)dy,
\Ees
where $R$ is a rectangle centered at the origin with arbitrary orientation and eccentricity $N$ (see Section \ref{s:21Kak} for its definition).
As aforementioned, the study of the Kakeya maximal function $\sup_{R}|\mathcal{K}_R f(x)|$ is another important problem in harmonic analysis related to Bochner-Riesz means (see e.g. \cite{Wol99}).
Notice that the study of noncommutative Kakeya maximal functions is more difficult since it can not be defined directly.
It is easy to see that $\mathcal{K}_R$ could be dominated by the Hardy-Littlewood average operator with bound $N$. This implies that the noncommutative Kakeya maximal operator is $L_2$ bounded with norm $N^{\fr12}$.
In this paper, we shall establish its sharp $L_2$ norm---$\log N$ (see Theorem \ref{t:21Kak}), which is crucial to our study of noncommutative Bochner-Riesz means. To the best knowledge of the author, it is the first time that a sharp estimate of the noncommutative Kakeya maximal function is obtained in noncommutative analysis.
The proof here is quite technical and our strategy is the microlocal decomposition, together with theories of the Fourier transform and noncommutative square/maximal functions.

Below we sketch out the proof of the $L_p$ boundedness of the operator-valued Bochner-Riesz means on  $\R^2$ (i.e. Theorem \ref{t:21BR}).
To get the full boundedness of Bochner-Riesz means, by the duality and the noncommutative analytic interpolation theorem (see the appendix), it suffices to show the result for the case $p=4$.
We first make a dyadic decomposition: $B^\lam=\sum_{k}T_k$ and matters are reduced to proving the $L_4$ norm of $T_k$ has enough decay in $k$.
We next make a microlocal decomposition:  $T_k=\sum_lT_{k,l}$ where the support of each $T_{k,l}$ lies in a small piece (denoted by $\Ga_{k,l}$) of annulus with the major direction $e^{2\pi il2^{-\fr k2}}$. Notice that the $L_4$ norm of $T_k(f)$ has an expression
\Bes
\Big\|\sum_lT_{k,l}(f)\Big\|_{L_4(\N)}=\Big\|\sum_l\sum_{l'}T_{k,l}(f)^*T_{k,l'}(f)\Big\|_{L_2(\N)}^{\fr12}.
\Ees
If the major directions of these pieces are closed to each other (i.e. $|l-l'|\leq C$), then the above term is bounded by a column square function norm $\big\|\big(\sum_{l}|T_{k,l}(f)|^2\big)^{\fr12}\big\|_{L_4(\N)}$.
If $|l-l'|\geq C$, then we need a very important geometric observation: $\{\Ga_{k,l}-\Ga_{k,l'}\}_{l,l'}$ is finite overlapped. With this geometric estimate, the $L_4$ norm of $T_kf$ is bounded by a row square function norm $\big\|\big(\sum_{l}|T_{k,l}(f)^*|^2\big)^{\fr12}\big\|_{L_4(\N)}$.
Consequently we  use both column and row square function norms to control the $L_4$ norm of $T_k(f)$, which is consistent with the theory of noncommutative square functions.
To estimate this column/row square functions, we should do more analysis for the kernel of $T_{k,l}$. Roughly speaking, a key fact in our proof is that $T_{k,l}$ can be  bounded by  a sum of  Kakeya average operators $\mathcal{K}_R$s where orientations of $R$s are just in a fixed direction---the major direction of $\Ga_{k,l}$.
By the dual theory between $L_{p'}(\M;\ell_\infty)$ and $L_p(\M;\ell_1)$, our estimates for noncommutative square functions can be reduced to that of Kakeya maximal functions, which will be systematically studied in Section \ref{s:21Kak}.

The methods above heavily rely on the geometry of the plane. For the higher dimensional case, to get some nontrivial boundedness of Bochner-Riesz means, some more new tools in harmonic analysis should be transferred to the noncommutative setting. We hope to work this problem in the future.

This paper is organized as follows. First we give some preliminaries of noncommutative  $L_p$ spaces, noncommutative maximal/square functions and related lemmas in Section \ref{s:212}.
In Section \ref{s:21Kak}, we investigate the noncommutative Kakeya maximal function and establish its sharper estimate there.
In Section \ref{s:21BR}, we obtain the full $L_p$ boundedness of the operator-valued Bochner-Riesz means on $\R^2$. The proof is based on  sharper estimates of noncommutative Kakeya maximal functions and a square function inequality studied in the previous sections.
Section \ref{s:21appli} is devoted to the study of Bochner-Riesz means on quantum tori.
In this section, we first establish the full estimates of the operator-valued Bochner-Riesz means on usual tori $\T^2$ and then transfer this result to that on two-dimensional quantum tori (i.e. Theorem \ref{t:21BRqtorusi}).
Finally for the reader's convenience, we give a proof of the noncommutative analytic interpolation theorem in the appendix which may be known to experts.

\textbf{Notation}. Throughout this paper, the letter $C$ stands for a positive finite constant which is independent of the essential variables, not necessarily the same one in each occurrence. $A\lc B$ means $A\leq CB$ for some constant $C$. By the notation $C_\eps$  we mean that the constant depends on the parameter $\eps$. $A\approx B$ means that $A\lc B$ and $B\lc A$.
For any measurable set $A\subset\R^d$, we denote the Lebesgue measure by $|A|$.
$\Z_+$ denotes the set of all nonnegative integers and $\Z_+^d={\Z_+\times \cdots\times \Z_+}$ with $d$-tuples product. For $\alp\in\Z_+^d$ and $x\in\R^d$, $x^\alp=x_1^{\alp_1}\cdots x_d^{\alp_d}$. Set $\R_+=(0,\infty)$. $\forall s\in\R_+$, $\lfloor s \rfloor$ denotes the integer part of $s$.
We use LHS to represent left hand side of an expression.
Given a function $f$ on $\T^d$, the Fourier transform of $f$ is defined by $\wh{f}(k)=\int_{\T^d}f(x)e^{-2\pi i x\cdot k}dx$.
For a function $f$ on $\R^d$, define $\mathcal{F}f$ (or $\hat{f}$) and $\mathcal{F}^{-1}f$ (or $\check{f}$) the Fourier transform and the inversion Fourier transform of $f$  by
$$\mathcal{F}f(\xi)=\int_{\R^d} e^{-2\pi i\inn{x}{\xi}}f(x)dx,\ \ \ \ \mathcal{F}^{-1}f(\xi)=\int_{\R^d}e^{2\pi i\inn{x}{\xi}}{f(x)dx}.$$
\vskip0.24cm

\section{Preliminaries and some lemmas}\label{s:212}
In this section, we introduce some basic knowledge of noncommutative harmonic analysis including noncommutative $L_p$ spaces, maximal functions, square functions and many operator-valued inequalities which are useful in this paper.
\subsection{Noncommutative $L_{p}$-spaces}\label{s:212lp}
Let $\M$ be a semifinite von Neumann algebra equipped with a normal semifinite faithful (\emph{n.s.f.} in short) trace $\tau$.
Denote by $\M_{+}$ the positive part of $\M$ and let $\mathcal{S_{+}}$  be the set of all $x\in\M_{+}$ whose support projections have finite trace. Let $\mathcal{S}$ be the linear span of $\mathcal{S_{+}}$, then $\mathcal{S}$ is a weak $*$ dense $\ast$-subalgebra of $\M$.
Consider $0< p<\infty$. For any $x\in\mathcal{S}$, $|x|^{p}\in\mathcal{S}$ and we set
$$\|x\|_{L_p(\M)}=\big(\tau(|x|^p)\big)^{1/p},\ \ x\in\mathcal{S},$$
where $|x|=(x^{\ast}x)^{\frac{1}{2}}$ is the modulus of $x$. Define the noncommutative $L_{p}$ space associated with $(\M,\tau)$ by the completion of $(\mathcal{S},\|\cdot \|_{L_p(\M)})$ and set it as $L_{p}(\M)$. For convenience, if $p=\infty$, we define $L_{\infty}(\M) = \M$ equipped with the operator norm $\|\cdot \|_{\M}$. Let $L_{p}^+(\M)$ denote the positive part of $L_{p}(\M)$.
A lot of basic properties of classical $L_p$ spaces, such as Minkowski's inequality, H\"older's inequality, dual property, real and complex interpolation,  have been transferred to this noncommutative setting.
In particular, the following monotone properties are frequently used in this paper: for $a,b\in\M$ and $\alp\in\R_+$,
\Be\label{e:21mono}
\|a\|_{L_p(\M)}\leq\|b\|_{L_p(\M)},\text{ if } 0\leq a\leq b;
\Ee
\Be\label{e:21mono01}
a^\alp\leq b^\alp, \text{ if $0\leq a\leq b$ and $0<\alp<1$.}
\Ee
For more about noncommutative $L_p$ spaces, we refer to the very detailed introduction in the survey article \cite{PX03} or the book \cite{Xu07}.

In this paper, we are interested in the noncommutative $L_p$ space on the tensor von Neumann algebra $\N=L_\infty(\R^d)\tens\M$. Set the tensor trace $\vphi=\int_{\R^d}dx\otimes\tau$. Define the noncommutative space $L_p(\N)$ associated with pairs $(\N,\vphi)$. Notice that $L_p(\N)$ is isometric to $L_p(\R^d; L_p(\M))$ the Bochner $L_p$ space on $\R^d$ with values in $L_p(\M)$.

\subsection{Noncommutative maximal functions}\label{s:212max}
It is difficult to define a noncommutative maximal function straightforwardly since two general elements in a von Neumann algebra may not be comparable. This obstacle can be overcome by defining the maximal norm directly.
We adopt the definition of the noncommutative maximal norm
introduced by G. Pisier \cite{Pis98} and M. Junge \cite{Jun02}.
\begin{definition}[$L_p(\M;\ell_\infty)$]
We define
$L_p(\M;\ell_\infty)$ the space of all sequences
$x=\{x_n\}_{n\in\Z}$ in $L_p(\M)$ which admits a factorization of the
following form: there exist $a, b\in L_{2p}(\M)$ and a bounded
sequence $y=\{y_n\}_{n\in\Z}$ in $L_\infty(\M)$ such that
 $x_n=ay_nb$, $\forall\; n\in\Z$.
The norm of  $x$ in $L_p(\M;\ell_\infty)$ is given by
 $$\|x\|_{L_p(\M;\ell_\infty)} =\inf\big\{\|a\|_{L_{2p}(\M)}\,
 \sup_{n\in\Z}\|y_n\|_{L_\infty(\M)}\,\|b\|_{L_{2p}(\M)}\big\} ,$$
where the infimum is taken over all factorizations of $x$ as above.
\end{definition}

If $x=\{x_n\}_{n\in\Z}$ is a sequence of positive elements, then $x\in L_p(\M;\ell_\infty)$ if and only if there exists a positive element $a\in L_p(\M)$ such that $0<x_n\leq a$, and
\Be\label{e:21positivem}
\|x\|_{L_p(\M;\ell_\infty)}=\inf\{\|a\|_{L_p(\M)}:\ 0<x_n\leq a, \forall n\in\Z\}.
\Ee

Similarly if $x=\{x_n\}_{n\in\Z}$ is a sequence of self-adjoint elements, then $x\in L_p(\M;\ell_\infty)$ if and only if there exists a positive element $a\in L_p(\M)$ such that $-a\leq x_n\leq a$, and
\Be\label{e:21selfadjointm}
\|x\|_{L_p(\M;\ell_\infty)}=\inf\{\|a\|_{L_p(\M)}:\  -a\leq x_n\leq a, \forall n\in\Z\}.
\Ee

More generally, if $\Lambda$ is an index set, we define  $L_p (\M;
\ell_{\infty}(\Lambda))$ as the space of all $x =
\{x_{\lambda}\}_{\lambda \in \Lambda}$ in $L_p (\M)$ that can be
factorized as
 $$x_{\lambda}=ay_{\lambda} b\quad\mbox{with}\quad a, b\in L_{2p}(\M),\; y_{\lambda}\in L_\infty(\M),\; \sup_{\lambda}\|y_{\lambda}\|_{L_\infty(\M)}<\infty.$$
Then the norm of $L_p (\M; \ell_{\infty}(\Lambda))$ is defined by
$$\|x\|_{L_p(\M;\ell_\infty(\Lam))}=\inf_{x_{\lambda}=ay_{\lambda} b}\big\{\|a\|_{L_{2p}(\M)}\,
\sup_{{\lambda}\in\Lambda}\|y_{\lambda}\|_{L_\infty(\M)}\,\|b\|_{L_{2p}(\M)}\big\}.$$
It was shown in \cite{JX07} that $x\in L_p (\M;
\ell_{\infty}(\Lambda))$ if and only if $\sup\big\{\|x\|_{L_p(\M;\ell_\infty(J))}\;:\; J\subset\Lambda,\; J\textrm{ is finite}\big\}<\infty$ and moreover in this case, the norm $\|x\|_{L_p(\M;\ell_\infty(\Lam))}$ is equal to the above supremum.

If $x=\{x_\lam\}_{\lam\in\Lambda}$ is positive (resp. self-adjoint),  $\|x\|_{L_p(\M;\ell(\Lambda))}$ has the similar property of \eqref{e:21positivem} (resp. \eqref{e:21selfadjointm}).

We will often use $\|\sup\limits_{\lam\in\Lambda}x_\lam\|_{L_p(\M)}$ to represent $\|x\|_{L_p(\M;\ell(\Lambda))}$.
However we point out that $\|\sup\limits_{\lam\in\Lambda}x_\lam\|_{L_p(\M)}$ is just a notation since $\sup\limits_{\lam\in\Lambda}x_\lam$ makes no sense in the noncommutative setting.

To study the dual property of the above spaces $L_p(\M;\ell_\infty)$, we need to introduce another space.

\begin{definition}[$L_p(\M;\ell_1)$]
Define $L_p(\M;\ell_1)$ as the space of all sequences $\{y_n\}$ in $L_p(\M)$ which could be factorized as
\Bes
y_n=\sum_ku^*_{k,n}v_{k,n},\quad \forall n\in\Z,
\Ees
for two families $\{u_{k,n}\}_{k,n\in\Z}$ and $\{v_{k,n}\}_{k,n\in\Z}$ in $L_{2p}(\M)$ such that
$\sum_{k,n\in\Z}u^*_{k,n}u_{k,n}\in L_p(\M)$ and $\sum_{k,n\in\Z}v^*_{k,n}v_{k,n}\in L_p(\M)$.
$L_p(\M;\ell_1)$ is equipped with the norm
\Bes
\|\{y_n\}\|_{L_p(\M;\ell_1)}=\inf\Big\|\sum_{k,n\in\Z}u^*_{k,n}u_{k,n}\Big\|_{L_p(\M)}^{\fr12}\Big\|\sum_{k,n\in\Z}v^*_{k,n}v_{k,n}\Big\|_{L_p(\M)}^{\fr12},
\Ees
where the infimun is taken over all decompositions of $\{y_n\}$ as above.
\end{definition}
It is not difficult to see that  if $y_n\geq0$ for all $n\in\Z$, $\{y_n\}\in L_p(\M;\ell_1)$ if and only if $\sum_{n\in\Z}y_n\in L_p(\M)$ (see e.g. \cite{Xu07}). In such a case, we have the following equality
\Bes
\|\{y_n\}\|_{L_p(\M;\ell_1)}=\Big\|\sum_{n\in\Z}y_n\Big\|_{L_p(\M)}.
\Ees

We introduce the following basic duality theorem of $L_p(\M;\ell_1)$, which has been established by M. Junge and Q. Xu in \cite{JX07}.
\begin{lemma}\label{l:21dual}
\rm{(i).} Suppose $1\leq p<\infty$. Let $p'$ be the conjugate index: $\fr 1p+\fr{1}{p'}=1$. Then the dual space of $L_p(\M;\ell_1)$ is $L_{p'}(\M;\ell_\infty)$. The element $x=\{x_n\}\in L_{p'}(\M;\ell_\infty)$ acts on $L_p(\M;\ell_1)$ as follows
\Bes
\inn{x}{y}=\sum_{n\in\Z}\tau(x_ny_n),\quad \forall y=\{y_n\}\in L_p(\M;\ell_1).
\Ees
\rm{(ii).} Suppose $1\leq p\leq\infty$. For any $x\in L_p(\M;\ell_\infty)$, we have
\Bes
\|\{x_n\}\|_{L_p(\M;\ell_\infty)}=\sup\Big\{\sum_{n}\tau(x_ny_n): y=\{y_n\}\in L_{p'}(\M;\ell_1)\ \text{and}\ \|y\|_{L_{p'}(\M;\ell_1)}\leq1\Big\}.
\Ees
Moreover if $x$ is positive, then
\Bes
\|\{x_n\}\|_{L_p(\M;\ell_\infty)}=\sup\Big\{\sum_{n}\tau(x_ny_n): y_n\in L^+_{p'}(\M)\ \text{and}\ \|\sum_ny_n\|_{L_{p'}(\M)}\leq1\Big\}.
\Ees
\end{lemma}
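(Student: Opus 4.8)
The plan is to reproduce the argument of Junge and Xu from \cite{JX07}. Granting the ordinary noncommutative duality $L_r(\M)^*=L_{r'}(\M)$, the tracial H\"older inequality, and the monotonicity properties \eqref{e:21mono}--\eqref{e:21mono01}, the statement splits into three pieces: (a) reduction to finite index sets; (b) the bound $|\inn{x}{y}|\le\|x\|_{L_{p'}(\M;\ell_\infty)}\,\|y\|_{L_p(\M;\ell_1)}$, which at the same time shows the pairing is well defined; and (c) surjectivity, i.e.\ every $\phi\in L_p(\M;\ell_1)^*$ is of this form. Step (a) is immediate on the $\ell_1$ side, since finitely supported sequences are norm dense: truncating the defining families $\{u_{k,n}\},\{v_{k,n}\}$ to $n\le N$ makes $\big\|\sum_{n>N,\,k}u_{k,n}^*u_{k,n}\big\|_{L_p(\M)}\to 0$ by monotone convergence; on the $\ell_\infty$ side it is exactly the exhaustion property already quoted from \cite{JX07}. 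So one may assume the index set is $\{1,\dots,N\}$ throughout.

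For (b), fix factorizations $x_n=ay_n'b$ with $a,b\in L_{2p'}(\M)$ and $\sup_n\|y_n'\|_{L_\infty(\M)}\le1$, and $y_n=\sum_k u_{k,n}^*v_{k,n}$ with $u_{k,n},v_{k,n}\in L_{2p}(\M)$. Since $\tfrac1{2p'}+\tfrac1{2p}=\tfrac12$, the elements $bu_{k,n}^*$ and $v_{k,n}a$ lie in $L_2(\M)$, so the tracial H\"older inequality gives $|\tau(x_ny_n)|=\big|\sum_k\tau\big(y_n'\,(bu_{k,n}^*)\,(v_{k,n}a)\big)\big|\le\sum_k\|bu_{k,n}^*\|_{L_2(\M)}\,\|v_{k,n}a\|_{L_2(\M)}$. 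Summing over $n,k$ and using Cauchy--Schwarz, matters reduce to the bound $\sum_{k,n}\|bu_{k,n}^*\|_{L_2(\M)}^2=\tau\big(b\,\big(\sum_{k,n}u_{k,n}^*u_{k,n}\big)\,b^*\big)\le\big\|\sum_{k,n}u_{k,n}^*u_{k,n}\big\|_{L_p(\M)}\,\|b\|_{L_{2p'}(\M)}^2$, obtained by one more H\"older, together with its symmetric counterpart in $v$ and $a$. Taking the infimum over all factorizations of $x$ and of $y$ then yields the claimed estimate, and in particular independence of the chosen representatives.

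The content is in (c). Given $\phi\in L_p(\M;\ell_1)^*$, the map $z\mapsto(0,\dots,z,\dots,0)$ ($z$ in the $n$-th slot) embeds $L_p(\M)$ isometrically into $L_p(\M;\ell_1)$: the $n$-th coordinate projection is a contraction on $L_p(\M;\ell_1)$, and writing $z=w|z|$ in polar form and factoring $z=(|z|^{1/2})^*(w|z|^{1/2})$ shows the norm of the single-entry sequence is exactly $\|z\|_{L_p(\M)}$. Hence there are unique $x_n\in L_{p'}(\M)$ with $\tau(x_nz)=\phi(0,\dots,z,\dots,0)$ and $\|x_n\|_{L_{p'}(\M)}\le\|\phi\|$, and by (b) it then suffices to produce a single factorization $x_n=ay_nb$ with $a,b\in L_{2p'}(\M)$ and $\sup_n\|y_n\|_{L_\infty(\M)}\le\|\phi\|$. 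For the finite sequence $(x_n)_{n\le N}$ this is carried out using the concrete column/row realization of $L_p(\M;\ell_1^N)$ inside a matrix-amplified noncommutative $L_p$ space (rewriting $y_n=\sum_k u_{k,n}^*v_{k,n}$ through a column $(u_{k,n})_k$ and a row $(v_{k,n})_k$): composing $\phi$ with the corresponding quotient map represents it by an $L_{p'}$ element of the amplified algebra, and testing against rank-one inputs together with a polar/support-projection argument extracts $a$ and $b$ so that the residual parts $y_n$ are bounded in $\M$ by $\|\phi\|$. The finiteness restriction is then removed by a weak-$*$ compactness (or ultraproduct) limit, using the exhaustion property once more.

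Part (ii) is then formal: the inequality $\sup\big\{\sum_n\tau(x_ny_n):\|y\|_{L_{p'}(\M;\ell_1)}\le1\big\}\le\|x\|_{L_p(\M;\ell_\infty)}$ is (b) with the roles of the two spaces interchanged, and the reverse inequality is the Hahn--Banach/bipolar dual of (i) for $1<p\le\infty$, while the case $p=1$ follows by a direct limiting argument; when $x$ is positive one restricts the test sequences $y$ to positive ones, for which $\|y\|_{L_{p'}(\M;\ell_1)}=\big\|\sum_n y_n\big\|_{L_{p'}(\M)}$, by the order form of the separation theorem, as in \eqref{e:21positivem}. The step I expect to be the genuine obstacle is the scalar factorization $x_n=ay_nb$ in (c): starting from a single $L_{p'}$-representative on the amplified algebra, one must peel off the ``$a(\cdot)b$'' factor \emph{uniformly in $n$} so that what remains is bounded in $\M$; this is the non-formal input, and it is precisely where the hyperfiniteness of $\ell_\infty^N$ --- equivalently, the availability of a trace and conditional expectation on the matrix blocks --- enters.
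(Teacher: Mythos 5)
The paper does not prove this lemma; it is quoted from Junge and Xu \cite{JX07}, so there is no in-paper argument to compare against. Your sketch follows the standard route, and the two pieces you actually carry out are sound: the reduction to finite index sets in (a) works as stated, and the pairing bound (b) is a complete and correct proof --- the rearrangement $\tau(x_ny_n)=\sum_k\tau\bigl(y_n'(bu_{k,n}^*)(v_{k,n}a)\bigr)$, followed by tracial H\"older, Cauchy--Schwarz in $(k,n)$, and a final H\"older with $\tfrac1p+\tfrac1{p'}=1$, is precisely the argument that shows $L_{p'}(\M;\ell_\infty)\hookrightarrow L_p(\M;\ell_1)^*$ contractively.

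The gap is in step (c), which you yourself identify as the obstacle. You correctly reduce surjectivity to producing a uniform factorization $x_n=ay_nb$ with $a,b\in L_{2p'}(\M)$ and $\sup_n\|y_n\|_{L_\infty(\M)}\leq\|\phi\|$, starting from the family $x_n\in L_{p'}(\M)$ that represents $\phi$ on the coordinate slices. But ``a polar/support-projection argument extracts $a$ and $b$'' is a placeholder for the entire technical content of the Junge--Xu theorem: extracting elements $a,b$ that work \emph{simultaneously} for all $n$ is not a formal consequence of representing $\phi$ by a single element of the amplified $L_{p'}$-space, and is what occupies most of the work in \cite{JX07}. As written, your sketch names the right intermediate object but asserts rather than proves the factorization. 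A secondary, smaller point: in part (ii) the endpoint $p=1$ needs its own argument, since $L_\infty(\M;\ell_1)$ is not a predual of $L_1(\M;\ell_\infty)$, and ``a direct limiting argument'' is likewise a gesture rather than a proof. The plan is the correct one, but the nontrivial mathematics is exactly at the step you defer.
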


\subsection{Noncommutative square functions}\label{s:212squa}\quad
To define the noncommutative square function, we should first introduce the so-called column and row function spaces. Let  $f=\{f_{j}\}$ be a finite sequence in $L_{p}(\M)$ where $1\leq p\leq\infty$.
Define
$$\|\{f_{j}\}\|_{L_{p}(\M; \ell_{2}^{r})}=\big\|(\sum|f^{\ast}_{j}|^{2})^{\frac{1}{2}}\big\|_{L_p(\M)},
\
\|\{f_{j}\}\|_{L_{p}(\M; \ell_{2}^{c})}=\big\|(\sum|f_{j}|^{2})^{\frac{1}{2}}\big\|_{L_p(\M)}.$$

\begin{definition}[$L_p(\M;\ell_2^{rc})$]
We define the spaces $L_p(\M;
\ell_{2}^{rc})$ as follows:
\begin{enumerate}[\rm (i).]
\item If $p\geq2$, $L_p(\M;
\ell_{2}^{rc})=L_{p}(\M; \ell_{2}^{c})\cap L_{p}(\M; \ell_{2}^{r})$
equipped with the intersection norm:
$$\|\{f_{j}\}\|_{L_p(\M;
\ell_{2}^{rc})}=\max\{\|\{f_{j}\}\|_{L_{p}(\M; \ell_{2}^{c})},
\|\{f_{j}\}\|_{L_{p}(\M; \ell_{2}^{r})}\}.$$
\item If $p<2$, $L_p(\M;
\ell_{2}^{rc})=L_{p}(\M; \ell_{2}^{c})+ L_{p}(\M; \ell_{2}^{r})$
equipped with the sum norm:
$$\|\{f_{j}\}\|_{L_p(\M;
\ell_{2}^{rc})}=\inf\{\|\{g_{j}\}\|_{L_{p}(\M; \ell_{2}^{c})}+
\|\{h_{j}\}\|_{L_{p}(\M; \ell_{2}^{r})}\},$$
where the infimun is taken over all decompositions $f_{j}=g_{j}+h_{j}$ with $g_{j}$ and $h_{j}$ in $L_{p}(\M)$.
\end{enumerate}
\end{definition}

It is easy to see that
$L_2(\M; \ell_{2}^{r}) = L_2(\M;
\ell_{2}^{c})=L_2(\M;
\ell_{2}^{rc})$.
Next we introduce some inequalities for $L_p(\M;\ell_2^{rc})$. The first one is H\"older type inequality whose proof can be found in  \cite{Xu07}.

\begin{lemma}\label{l:21vecth}
Let $0<p,q,r\leq\infty$ be such that $1/r=1/p+1/q$. Then for any $f\in L_p(\M;\ell^c_2)$ and $g\in L_q(\M;\ell^c_{2})$,
$$\Big\|\sum_if_i^*g_i\Big\|_{L_r(\M)}\leq \Big\|\Big(\sum_i |f_i|^2\Big)^{\frac12}\Big\|_{L_p(\M)}
\Big\|\Big(\sum_i|g_i|^2\Big)^{\frac12}\Big\|_{L_q(\M)}.$$
\end{lemma}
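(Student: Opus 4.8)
The plan is to recognize the column norms as honest noncommutative $L_p$ norms on a larger (still semifinite) von Neumann algebra and then reduce the inequality to the noncommutative H\"older inequality. By a routine truncation — restrict to the index set $\{1,\dots,N\}$ and let $N\to\infty$, controlling the tails with the monotone property \eqref{e:21mono} — it suffices to treat finite sequences $\{f_i\}_{i=1}^N$ and $\{g_i\}_{i=1}^N$. Work in $\M_N=\M\tens M_N$ with the tensor trace $\tau\otimes\mathrm{tr}$ ($\mathrm{tr}$ the standard trace on $M_N$), let $\{e_{i,j}\}$ be the matrix units, and form the column elements
$$F=\sum_{i=1}^N f_i\otimes e_{i,1}\in L_p(\M_N),\qquad G=\sum_{i=1}^N g_i\otimes e_{i,1}\in L_q(\M_N).$$

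First I would compute $F^*F=\bigl(\sum_i f_i^*f_i\bigr)\otimes e_{1,1}$ and $G^*G=\bigl(\sum_i g_i^*g_i\bigr)\otimes e_{1,1}$, which identifies $\|F\|_{L_p(\M_N)}=\bigl\|(\sum_i|f_i|^2)^{1/2}\bigr\|_{L_p(\M)}$ and $\|G\|_{L_q(\M_N)}=\bigl\|(\sum_i|g_i|^2)^{1/2}\bigr\|_{L_q(\M)}$. Likewise $F^*G=\bigl(\sum_i f_i^*g_i\bigr)\otimes e_{1,1}$, so $\|F^*G\|_{L_r(\M_N)}=\bigl\|\sum_i f_i^*g_i\bigr\|_{L_r(\M)}$. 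Now apply the noncommutative H\"older inequality $\|ab\|_{L_r}\le\|a\|_{L_p}\|b\|_{L_q}$ (valid for all $0<p,q\le\infty$ with $1/r=1/p+1/q$, including the quasi-Banach range $r<1$) with $a=F^*$, $b=G$, together with the isometry $\|F^*\|_{L_p(\M_N)}=\|F\|_{L_p(\M_N)}$ coming from traciality of $\tau\otimes\mathrm{tr}$. This gives
$$\Big\|\sum_i f_i^*g_i\Big\|_{L_r(\M)}=\|F^*G\|_{L_r(\M_N)}\le\|F^*\|_{L_p(\M_N)}\|G\|_{L_q(\M_N)}=\Big\|\Big(\sum_i|f_i|^2\Big)^{1/2}\Big\|_{L_p(\M)}\Big\|\Big(\sum_i|g_i|^2\Big)^{1/2}\Big\|_{L_q(\M)},$$
which is exactly the asserted estimate.

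For the general countably-infinite case I would replace $M_N$ by $B(\ell_2)$ and $F,G$ by the corresponding column elements of $L_p(\M\tens B(\ell_2))$ and $L_q(\M\tens B(\ell_2))$. I do not expect a genuine obstacle; the only points that require a word of justification are that the defining series of $F$ and $G$ converge in the $L_p$, resp.\ $L_q$, norm and that $\sum_i f_i^*g_i$ converges in $L_r(\M)$ — both of which follow by applying the already-established finite estimate to the tails of the sequences and invoking completeness. The entire content of the lemma is thus packaged into two standard facts: the column embedding $\{f_i\}\mapsto\sum_i f_i\otimes e_{i,1}$ is an $L_p$-isometry, and noncommutative H\"older holds in the full quasi-Banach range; cf. \cite{PX03,Xu07}.
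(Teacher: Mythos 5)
Your argument is correct, and it is the standard proof of this lemma — the paper itself gives no proof but defers to \cite{Xu07}, where precisely this column-embedding-plus-H\"older argument is used. The key identification $\|F\|_{L_p(\M\tens B(\ell_2))}=\|(\sum_i|f_i|^2)^{1/2}\|_{L_p(\M)}$ via $F^*F=(\sum_i f_i^*f_i)\otimes e_{1,1}$, the identity $F^*G=(\sum_i f_i^*g_i)\otimes e_{1,1}$, and the use of noncommutative H\"older (valid in the full quasi-Banach range $0<r\le\infty$ on semifinite algebras) together with $\|F^*\|_p=\|F\|_p$ are all correctly executed; the truncation-and-limit step is routine.
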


The second one is the noncommutative Khintchine inequality for the Rademacher sequence as follows.
\begin{lemma}[see \cite{Xu07}]\label{l:21Kin}
Let $1\leq p<\infty$ and $\{x_n\}$ be a finite sequence in $L_p(\M)$. Then
\Bes
\big\|\sum_nx_n\eps_n\big\|_{L_p(\Om;L_p(\M))}\approx\|\{x_n\}\|_{L_p(\M;\ell_2^{rc})},
\Ees
where $\{\eps_n\}$ is a Rademacher sequence on a probability space $(\Om,P)$.
\end{lemma}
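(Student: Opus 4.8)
The plan is to reduce matters to one-sided ``column'' and ``row'' estimates, to settle the range $p\ge2$ first, and then to descend to $1\le p<2$ by trace duality. Write $\mathcal N_\Om=L_\infty(\Om,P)\tens\M$ with its trace $\varphi$, so that $L_p(\Om;L_p(\M))=L_p(\mathcal N_\Om)$; put $F=\sum_n x_n\eps_n\in L_p(\mathcal N_\Om)$ (we may assume the sum finite) and let $\mathcal E\colon\mathcal N_\Om\to 1\tens\M\cong\M$ be the canonical trace-preserving conditional expectation, which is a contraction on $L_q(\mathcal N_\Om)$ for every $1\le q\le\infty$. The case $p=2$ is just the orthogonality identity $\|F\|_{L_2(\mathcal N_\Om)}^2=\tau\big(\sum_n|x_n|^2\big)=\|\{x_n\}\|_{L_2(\M;\ell_2^{rc})}^2$.

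For $p\ge2$ the lower bound is immediate: since $\mathcal E(F^*F)=\sum_n|x_n|^2$ and $\mathcal E(FF^*)=\sum_n|x_n^*|^2$, contractivity of $\mathcal E$ on $L_{p/2}(\mathcal N_\Om)$ (here $p/2\ge1$) gives
$$\|F\|_{L_p(\mathcal N_\Om)}^2=\big\|F^*F\big\|_{L_{p/2}(\mathcal N_\Om)}\ge\Big\|\sum_n|x_n|^2\Big\|_{L_{p/2}(\M)}=\Big\|\big(\sum_n|x_n|^2\big)^{\fr12}\Big\|_{L_p(\M)}^2,$$
and symmetrically with $x_n^*$, so $\|F\|_{L_p(\mathcal N_\Om)}\gc\|\{x_n\}\|_{L_p(\M;\ell_2^{rc})}$. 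For the upper bound I would first take $p=2k$ an even integer and expand
$$\|F\|_{L_{2k}(\mathcal N_\Om)}^{2k}=\varphi\big((F^*F)^k\big)=\sum \Big(\int_\Om\eps_{i_1}\eps_{j_1}\cdots\eps_{i_k}\eps_{j_k}\,dP\Big)\,\tau\big(x_{i_1}^*x_{j_1}\cdots x_{i_k}^*x_{j_k}\big).$$
The scalar factor lies in $\{0,1\}$ and vanishes unless every index occurs with even multiplicity, so the surviving terms split into $O_k(1)$ families according to the pattern of coincidences among $i_1,\dots,j_k$; for each family I would collapse the iterated sum by repeatedly applying the noncommutative Cauchy--Schwarz inequality (Lemma \ref{l:21vecth}) and H\"older's inequality, each step peeling off a factor $\big\|\big(\sum_n|x_n|^2\big)^{\fr12}\big\|_{L_{2k}(\M)}$ or $\big\|\big(\sum_n|x_n^*|^2\big)^{\fr12}\big\|_{L_{2k}(\M)}$, until $2k$ factors have been extracted; this gives $\|F\|_{L_{2k}(\mathcal N_\Om)}\lc\|\{x_n\}\|_{L_{2k}(\M;\ell_2^{rc})}$ with constant depending only on $k$. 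Complex interpolation of the scales $\big(L_p(\mathcal N_\Om)\big)_p$ and $\big(L_p(\M;\ell_2^{rc})\big)_p$ between $p=2$ and $p=2k$, letting $k\to\infty$, then yields the upper bound for all $2\le p<\infty$.

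For $1<p<2$ I would argue by trace duality. For such $p$ the sum space $L_p(\M;\ell_2^{rc})$ is reflexive and, under $\langle\{x_n\},\{y_n\}\rangle=\sum_n\tau(x_n^*y_n)$, its dual is the intersection space $L_{p'}(\M;\ell_2^{rc})$ with $p'>2$ (see \cite{Xu07}). If $\|\{y_n\}\|_{L_{p'}(\M;\ell_2^{rc})}\le1$, then $\sum_n\tau(x_n^*y_n)=\varphi\big(F^*\sum_m y_m\eps_m\big)$, so H\"older together with the already-proved case $p'>2$ gives $\big|\sum_n\tau(x_n^*y_n)\big|\le\|F\|_{L_p(\mathcal N_\Om)}\big\|\sum_m y_m\eps_m\big\|_{L_{p'}(\mathcal N_\Om)}\lc\|F\|_{L_p(\mathcal N_\Om)}$; a supremum over $\{y_n\}$ then yields $\|\{x_n\}\|_{L_p(\M;\ell_2^{rc})}\lc\|F\|_{L_p(\mathcal N_\Om)}$. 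For the reverse inequality I would use that the orthogonal (Rademacher) projection $\mathcal P$ of $L_{p'}(\mathcal N_\Om)$ onto $\overline{\operatorname{span}}\{\eps_n\otimes x:x\in L_2(\M)\}$ is bounded when $p'>2$: the coefficient map $G\mapsto\{\int_\Om G\eps_n\,dP\}_n$ is bounded from $L_{p'}(\mathcal N_\Om)$ into both $L_{p'}(\M;\ell_2^c)$ and $L_{p'}(\M;\ell_2^r)$ (interpolate its trivial $L_2$ Bessel bound and its $L_\infty$ bound), and composing it with $\{y_n\}\mapsto\sum_n y_n\eps_n$ and invoking the upper bound at $p'>2$ shows $\mathcal P$ is bounded; writing $\|F\|_{L_p(\mathcal N_\Om)}=\sup\{|\varphi(F^*G)|:\|G\|_{L_{p'}(\mathcal N_\Om)}\le1\}$, replacing $G$ by $\mathcal PG$ (permissible since $\mathcal PF=F$ and $\mathcal P^*=\mathcal P$) and pairing against $\{x_n\}$ gives $\|F\|_{L_p(\mathcal N_\Om)}\lc\|\{x_n\}\|_{L_p(\M;\ell_2^{rc})}$.

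The endpoint $p=1$ is genuinely more delicate: the last reduction would require the upper estimate at $p'=\infty$, which is false (e.g.\ $x_1=x_2$ a projection), and I would instead invoke the classical argument of Lust--Piquard and Pisier in that case (see \cite{Xu07}). \textbf{The main obstacle is the upper bound for $p\ge2$} --- equivalently, by duality, the lower bound for $p<2$ --- and within it the combinatorics of the even-integer moments while keeping the constant independent of the number of terms; the endpoint $p=1$ is the other genuinely hard point.
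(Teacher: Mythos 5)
The paper does not prove this lemma; it is invoked as a known result with the citation to \cite{Xu07}, so there is no in-paper argument to compare against. Your plan does match the standard route in the literature (Lust--Piquard's even-moment computation for $p\ge 2$, duality for $1<p<2$, Lust--Piquard--Pisier at $p=1$), but several steps are glossed over or slightly off. \textbf{(i)} The passage from $p=2k$ to general $p\in[2,\infty)$ by ``complex interpolation of the scales $L_p(\M;\ell_2^{rc})$'' is a genuine gap: you need the nontrivial fact that $(L_{p_0}(\M;\ell_2^{rc}),L_{p_1}(\M;\ell_2^{rc}))_\theta=L_p(\M;\ell_2^{rc})$ for $p_0,p_1\ge2$, i.e.\ that intersection spaces interpolate coordinate-wise here; this has to be quoted or proved separately. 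The even-moment combinatorics is also substantially more work than ``repeatedly applying Cauchy--Schwarz and H\"older'' --- the coincidence patterns among $(i_1,j_1,\dots,i_k,j_k)$ have to be organized carefully --- though the idea is correct. \textbf{(ii)} In the reverse inequality for $1<p<2$, the parenthetical ``interpolate its trivial $L_2$ Bessel bound and its $L_\infty$ bound'' is not a valid justification as written: the $L_\infty\to L_\infty$ bound of $G\mapsto\int G\eps_n\,dP$ is only coordinatewise and does not land in $L_\infty(\M;\ell_2^c)$, so the interpolation claim does not go through in that form. The correct (and much shorter) ingredient is the operator inequality $\sum_n|\mathcal E(G\eps_n)|^2\le\mathcal E(G^*G)$ for \emph{arbitrary} $G$, valid because $\mathcal E((G-\sum_n\mathcal E(G\eps_n)\eps_n)^*(\,\cdot\,))\ge0$; combined with contractivity of $\mathcal E$ on $L_{q}$ for all $1\le q\le\infty$ this gives $\|\{\mathcal E(G\eps_n)\}\|_{L_{p'}(\M;\ell_2^{rc})}\le\|G\|_{L_{p'}}$ for every $p'\ge 2$ \emph{including} $p'=\infty$. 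With this in hand the whole Rademacher-projection machinery disappears --- plain trace duality against $L_{p'}(\M;\ell_2^{rc})$ gives the upper bound $\|\sum x_n\eps_n\|_p\lesssim\|\{x_n\}\|_{L_p(\M;\ell_2^{rc})}$ for all $1\le p<2$, including $p=1$. \textbf{(iii)} Consequently your closing remark slightly misdescribes $p=1$: the upper estimate there is not the obstruction (it follows as just described); what is genuinely hard at $p=1$ is only the \emph{lower} bound, i.e.\ $\|\{x_n\}\|_{L_1(\M;\ell_2^{rc})}\lesssim\|\sum x_n\eps_n\|_{L_1}$, which is the Lust--Piquard--Pisier theorem you cite.
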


We also require some convexity inequalities for the operator-valued function in this paper.
The following one is Cauchy-Schwarz type inequality which can found in \cite[Page 9]{Mei07}).
\begin{lemma}
Let $(\Sigma,\mu)$ be a measure space. Suppose that $f:\ \Sigma\rta\M$ is a weak-$*$ integrable function and $g:\Sigma\rta\mathbb C$ is an integrable function. Then
\Be\label{e:21conv}
\Big|\int_{\Sigma}f(x)g(x)d\mu(x)\Big|^2\leq\int_{\Sigma}|f(x)|^2d\mu(x)\int_{\Sigma}|g(x)|^2d\mu(x),
\Ee
where $\leq$ is understood as the partial order in the positive cone of $\M$.
\end{lemma}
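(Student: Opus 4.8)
The plan is to reduce the operator inequality \eqref{e:21conv} to the ordinary scalar Cauchy--Schwarz inequality by testing against vectors. Recall that $\M$ acts on a Hilbert space $H$, so $\M\subseteq B(H)$. Both sides of \eqref{e:21conv} are positive elements of $\M$, and a bounded self-adjoint operator $A$ satisfies $A\le B$ precisely when $\langle A\xi,\xi\rangle\le\langle B\xi,\xi\rangle$ for every $\xi\in H$; so it is enough to prove, for each fixed $\xi\in H$,
\[
\Big\langle\Big|\int_\Sigma f(x)g(x)\,d\mu(x)\Big|^2\xi,\xi\Big\rangle\le\Big(\int_\Sigma|g(x)|^2\,d\mu(x)\Big)\Big\langle\Big(\int_\Sigma|f(x)|^2\,d\mu(x)\Big)\xi,\xi\Big\rangle .
\]

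I would first rewrite the left-hand side. Writing $T=\int_\Sigma fg\,d\mu\in\M$ and using $|T|^2=T^*T$, it equals $\|T\xi\|_H^2$. Since $fg$ is weak-$*$ integrable one may move $\xi$ inside the operator-valued integral, i.e. $T\xi=\int_\Sigma g(x)\,f(x)\xi\,d\mu(x)$, now an $H$-valued integral. Applying the triangle inequality for vector-valued integrals and then the scalar Cauchy--Schwarz inequality on $(\Sigma,\mu)$ to the functions $|g(x)|$ and $\|f(x)\xi\|_H$ gives
\[
\|T\xi\|_H\le\int_\Sigma|g(x)|\,\|f(x)\xi\|_H\,d\mu(x)\le\Big(\int_\Sigma|g(x)|^2\,d\mu(x)\Big)^{1/2}\Big(\int_\Sigma\|f(x)\xi\|_H^2\,d\mu(x)\Big)^{1/2}.
\]
Squaring, and using $\|f(x)\xi\|_H^2=\langle f(x)^*f(x)\xi,\xi\rangle=\langle|f(x)|^2\xi,\xi\rangle$ together with $\int_\Sigma\|f(x)\xi\|_H^2\,d\mu=\langle(\int_\Sigma|f|^2\,d\mu)\xi,\xi\rangle$, produces exactly the scalar inequality displayed above. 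Since $\int_\Sigma|g|^2\,d\mu$ is a nonnegative scalar it commutes with $\int_\Sigma|f|^2\,d\mu$, so the order of the two factors on the right-hand side of \eqref{e:21conv} is immaterial, and the lemma follows.

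The content of the proof is this short reduction to scalar Cauchy--Schwarz; what remains is routine measure-theoretic bookkeeping rather than a genuine obstacle. One must check that $gf$ is weak-$*$ integrable (from $g\in L_1(\Sigma,\mu)$ and the standing hypotheses on $f$, which also make $\int_\Sigma|f|^2\,d\mu$ a well-defined element of $\M$), that $\xi$ may legitimately be pulled through the operator-valued integral, and that $x\mapsto\|f(x)\xi\|_H^2$ is $\mu$-integrable; if this last point fails, the right-hand side of \eqref{e:21conv} already dominates everything, and the degenerate case $\int_\Sigma|g|^2\,d\mu=0$ (so $g=0$ a.e. and $T=0$) is trivial. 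All of these are handled by the standard theory of weak-$*$ (Pettis) integration for von Neumann algebras used elsewhere in the paper. A purely operator-theoretic variant avoiding test vectors is also available: expand the pointwise positive integrand $0\le|f(x)-t\,\overline{g(x)}\,T|^2$ for $t\in\R$ with $T=\int_\Sigma fg\,d\mu$, integrate to obtain $(2t-t^2\int_\Sigma|g|^2\,d\mu)\,|T|^2\le\int_\Sigma|f|^2\,d\mu$, and optimize over $t$ (the optimum occurring at $t=(\int_\Sigma|g|^2\,d\mu)^{-1}$), which yields the same estimate.
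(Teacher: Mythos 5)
The paper does not prove this lemma itself; it cites the memoir of Mei (reference \cite{Mei07}, page 9) for it, so there is no internal proof to compare against line by line. Your argument is correct and self-contained. Testing both sides of \eqref{e:21conv} against vectors $\xi$ in the Hilbert space on which $\M$ acts, unfolding $\langle |T|^2\xi,\xi\rangle=\|T\xi\|_H^2$ and $\langle |f(x)|^2\xi,\xi\rangle=\|f(x)\xi\|_H^2$, and then invoking the scalar Cauchy--Schwarz inequality on $(\Sigma,\mu)$ for the pair $|g(\cdot)|$, $\|f(\cdot)\xi\|_H$ does give exactly the asserted order relation in $\M$; the Pettis-integration and degenerate-case bookkeeping you flag is routine and does not hide anything. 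Your alternative, test-vector-free variant---integrating the pointwise positivity $0\le |f(x)-t\,\overline{g(x)}\,T|^2$ with $T=\int_\Sigma fg\,d\mu$ and optimizing over $t\in\R$---is equally valid and is in fact the form in which this sort of operator-valued Cauchy--Schwarz inequality is typically established in the cited literature. Both proofs exploit the same positivity-of-a-quadratic-form mechanism: the first is the more geometric reduction, the second is purely algebraic and closer in spirit to the cited source. Either is acceptable.
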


Finally we introduce the following vector-valued Plancherel theorem which will be mostly used in  square function estimates: Let $\N=L_\infty(\R^d)\tens\M$, then we have
\Be\label{e:21planch}
\|\mathcal{F} f\|_{L_2(\N)}=\|f\|_{L_2(\N)},
\Ee
which is a consequence of the fact $L_2(\M)$ is a Hilbert space. Such vector-valued Plancherel theorem is sufficient in most part of our proof though sometimes we need a more general operator-valued Parseval's relation:
for $f,g\in L_2(L_\infty(\R^d)\tens\M)$, we have
\Be\label{e:21pars}
\int_{\R^d}g^*(x)f(x)dx=\int_{\R^d}(\mathcal{F}g)^*(\xi)\mathcal{F}(f)(\xi)d\xi.
\Ee
\vskip0.24cm

\section{Noncommutative Kakeya maximal functions}\label{s:21Kak}

In this section, we study the boundedness of noncommutative Kakeya maximal functions. Before that we give several definitions and lemmas.
We only consider $d=2$. Set $\N=L_\infty(\R^2)\tens\M$ and $\vphi=\int_{\R^2}dx\otimes\tau$. Denote $L_p(\N)$ the noncommutative $L_p$ space associated with pairs $(\N,\vphi)$.

The main preliminaries of noncommutative maximal functions have been given in Subsection \ref{s:212max}.
We introduce the noncommutative Kakeya maximal function as follows.
Define the eccentricity of a rectangle by the ratio of the length of its long side to that of its short side.
Let $N$ be a positive integer.
Define the set $\mathcal{R}_N$ as rectangles in the plane of arbitrary orientation whose center is the origin and eccentricity  is $N$.
For $f\in L_p(\N)$, we define the Kakeya average operator as follows
\Be\label{d:21Kak}
\mathcal{K}_R f(x)=|R|^{-1}\int_{R}f(x-y)dy,
\Ee
where $R$ is a rectangle belonging to $\mathcal{R}_N$.
We are mainly interested in the maximal $L_2$ norm of noncommutative Kakeya average operator, since it is a crucial estimate in the study of noncommutative Bochner-Riesz means.

Let us first give a trivial bound of its $L_2$ norm.
Recall that the Hardy-Littlewood average operator is defined by
\Bes
M_Q f(x)=\fr{1}{|Q|}\int_Qf(x-y)dy,
\Ees
where $Q$ is a cube in $\R^2$ with center zero and arbitrary orientation.
For any rectangle $R\in \mathcal{R}_N$, there exists a cube $Q$ such that $R\subset Q$ and $l(Q)$ equals to the  length of long side of $R$.
Consider $f$ as a positive function in $\N$.
Then $\mathcal{K}_Rf(x)\leq NM_Q f(x)$. Using the noncommutative Hardy-Littlewood maximal operator is of weak type $(1,1)$ (see \cite{Mei07}), we get the Kakeya maximal operator is of weak type $(1,1)$ with bound $N$.
Applying the noncommutative Marcinkiewicz interpolation theorem in \cite{JX07}, together with the fact that the maximal operator of $\mathcal{K}_R$ is of $(\infty,\infty)$, we get that the maximal operator of $\mathcal{K}_R$ is of strong $(2,2)$ type with bound $N^{\fr12}$. However this bound is quite rough and not sufficient for our later application.
The following improved bound is our main result in this section.
\begin{theorem}\label{t:21Kak}
Let $\mathcal{K}_R$ be the Kakeya average operator defined in \eqref{d:21Kak}. Then for $f\in L_2(\N)$, we have
\Bes
\|\sup_{R\in \mathcal{R}_N} \mathcal{K}_R f\|_{L_2(\N)}\lc(\log N)\|f\|_{L_2(\N)}.
\Ees
\end{theorem}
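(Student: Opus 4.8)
The plan is to mimic the classical proof that the Kakeya maximal operator (with eccentricity $N$) has $L_2(\R^2)$ operator norm $O(\log N)$, due in essence to Córdoba, while keeping everything in the column/row framework so that it survives passage to the von Neumann algebra $\N=L_\infty(\R^2)\tens\M$. First I would reduce to a finite, well-separated family of directions: by a standard pigeonhole/covering argument it suffices to bound $\sup_{R}\mathcal{K}_R$ where the long axes of the rectangles $R$ point in one of $\sim N$ directions $\omega_j=e^{2\pi i j/N}$, $0\le j<N$, and for each fixed direction one takes the largest rectangle (translations and dilations of the long side only cost the Hardy--Littlewood maximal operator in that direction, which is $L_2$-bounded with a \emph{dimension-free} constant in the noncommutative setting by the one-dimensional argument of \cite{Mei07}). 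So matters reduce to estimating $\|\sup_{j}\mathcal{K}_{R_j} f\|_{L_2(\N)}$ for the fixed dyadic-width rectangles $R_j$.

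Next, since the family is now a \emph{finite} index set of positive operators $\{\mathcal{K}_{R_j}f\}$ (for $f\ge 0$; the general case follows by splitting into real/imaginary and positive/negative parts), I would use the duality of Lemma~\ref{l:21dual}:
\Bes
\big\|\sup_j \mathcal{K}_{R_j}f\big\|_{L_2(\N)}=\sup\Big\{\sum_j \vphi\big((\mathcal{K}_{R_j}f)\, g_j\big):\ g_j\in L_2^+(\N),\ \big\|\sum_j g_j\big\|_{L_2(\N)}\le 1\Big\}.
\Ees
Moving $\mathcal{K}_{R_j}$ onto $g_j$ by self-adjointness of the average (Parseval \eqref{e:21pars}, or just Fubini), the right side becomes $\sum_j\vphi\big(f\,(\mathcal{K}_{R_j}g_j)\big)=\vphi\big(f\cdot\sum_j \mathcal{K}_{R_j}g_j\big)\le \|f\|_{L_2(\N)}\,\big\|\sum_j\mathcal{K}_{R_j}g_j\big\|_{L_2(\N)}$ by Cauchy--Schwarz (here using positivity so the trace pairing is genuinely of two positive operators and $\|\sum_j\mathcal{K}_{R_j}g_j\|_2$ controls it). Thus the whole theorem reduces to the \emph{dual/linearized} estimate
\Be\label{e:dualKak}
\Big\|\sum_{j} \mathcal{K}_{R_j} g_j\Big\|_{L_2(\N)}\lc (\log N)\Big\|\sum_j g_j\Big\|_{L_2(\N)},\qquad g_j\ge 0.
\Ee

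To prove \eqref{e:dualKak} I would expand the square: $\big\|\sum_j \mathcal{K}_{R_j}g_j\big\|_{L_2(\N)}^2=\sum_{j,k}\vphi\big((\mathcal{K}_{R_j}g_j)^*(\mathcal{K}_{R_k}g_k)\big)$. Using Plancherel \eqref{e:21planch}, $\mathcal{K}_R$ has Fourier multiplier $\widehat{1_R}/|R|$, which is a bounded smooth function concentrated on the dual slab; the geometric heart of the classical argument is the elementary estimate that for $R_j,R_k$ of common width $1/N$ and making angle $\theta_{jk}\approx |j-k|/N$, the "overlap" satisfies
\Bes
\big| \langle \mathcal K_{R_j}h, \mathcal K_{R_k}h\rangle\big|\ \lc\ \frac{1}{1+N\,\theta_{jk}}\,\|h\|_2^2\ \approx\ \frac{1}{1+|j-k|}\,\|h\|_2^2 .
\Bes
I would prove the operator-valued analogue of this pointwise/Fourier estimate: $\|\mathcal{K}_{R_j}\mathcal{K}_{R_k}\|_{L_2(\N)\to L_2(\N)}\lc (1+|j-k|)^{-1}$, which is the same computation as in the scalar case since it only involves the (scalar) multiplier symbols and \eqref{e:21planch}. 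Combining with Cauchy--Schwarz in the form $\vphi\big((\mathcal{K}_{R_j}g_j)^*(\mathcal{K}_{R_k}g_k)\big)\le \|\mathcal{K}_{R_j}g_j\|_2\|\mathcal{K}_{R_k}g_k\|_2$ and the Schur test with weights $(1+|j-k|)^{-1}$ (whose row sums are $O(\log N)$) gives $\sum_{j,k}\lc (\log N)\sum_j\|g_j\|_2^2$; and $\sum_j\|g_j\|_2^2=\sum_j\vphi(g_j^2)\le \vphi\big((\sum_j g_j)^2\big)=\|\sum_j g_j\|_2^2$ because the $g_j$ are positive. This yields \eqref{e:dualKak} and hence the theorem.

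The main obstacle I anticipate is \emph{not} the Fourier/Schur bookkeeping (which is essentially scalar) but making the initial reduction to $\sim N$ directions genuinely cost only $O(1)$ in the noncommutative $L_2(\M;\ell_\infty)$-norm: one must absorb arbitrary translations and dilations of each rectangle along its long axis into a one-dimensional maximal operator, and invoke the fact that the noncommutative one-dimensional Hardy--Littlewood maximal function is $L_2$-bounded with constant independent of everything (so that stacking $N$ directions does not reintroduce an $N$-loss). A careful treatment would fix a Schwartz majorant of $1_R/|R|$ adapted to the slab, dominate $\mathcal{K}_R$ by a directional maximal average against that majorant, and then apply the vector-valued $L_2$ theory of Subsection~\ref{s:212max}; this is where one should be most careful to keep all constants dimension-free and $N$-free before the $\log N$ is collected from the angular sum.
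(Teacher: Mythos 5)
The key almost-orthogonality estimate on which your whole computation rests is false, and the rest of the argument collapses because of it. You claim $\|\mathcal{K}_{R_j}\mathcal{K}_{R_k}\|_{L_2(\N)\to L_2(\N)}\lesssim(1+|j-k|)^{-1}$, equivalently $|\langle\mathcal{K}_{R_j}h,\mathcal{K}_{R_k}h\rangle|\lesssim(1+N\theta_{jk})^{-1}\|h\|_2^2$. But $\mathcal{K}_R$ is convolution with $\mathbf 1_R/|R|$ and its Fourier multiplier $m_R(\xi)=\widehat{\mathbf 1_R}(\xi)/|R|$ satisfies $m_R(0)=1$; in fact $m_{R_j}$ is $\approx 1$ on an entire dual slab of unit thickness transverse to the long axis of $R_j$. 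Hence the product $m_{R_j}m_{R_k}$ also equals $1$ at the origin, and $\|\mathcal{K}_{R_j}\mathcal{K}_{R_k}\|_{L_2\to L_2}=\|m_{R_j}m_{R_k}\|_{L_\infty}=1$ for every pair $(j,k)$. (Concretely, take $h$ with $\widehat h$ concentrated near $\xi=0$: then $\mathcal{K}_{R_j}h\approx h$ for every $j$, so $\langle\mathcal{K}_{R_j}h,\mathcal{K}_{R_k}h\rangle\approx\|h\|_2^2$.) With the correct bound $1$, your Schur test only yields $\sum_{j,k}\|g_j\|_2\|g_k\|_2=(\sum_j\|g_j\|_2)^2\le N\sum_j\|g_j\|_2^2$, i.e.\ a loss of $N^{1/2}$ rather than $\log N$, which is no better than the trivial bound. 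This is the conceptual obstruction: differently oriented Kakeya averages are not almost orthogonal as operators, because they all reproduce low frequencies identically. Your dual reduction $\|\sum_j\mathcal{K}_{R_j}g_j\|_2\lesssim(\log N)\|\sum_j g_j\|_2$ is itself a correct and equivalent target, and the reduction to finitely many directions via the noncommutative directional Hardy--Littlewood maximal function is sound, but no naive overlap-of-averages argument can reach it.

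The paper's proof is built precisely to avoid this. It reduces to $N=2^m$ and runs an induction over the eccentricity scale: one shows
\begin{equation*}
\Big\|\sup_{0\le k<2^m,\,j\in\Z}M_{2^j}^{k,2^m}(f)\Big\|_{L_2(\N)}\le C\|f\|_{L_2(\N)}+\Big\|\sup_{0\le k<2^{m-1},\,j\in\Z}M_{2^j}^{k,2^{m-1}}(f)\Big\|_{L_2(\N)},
\end{equation*}
with $C$ independent of $m$, and iterates $m\approx\log N$ times. To prove the increment bound, the paper makes a microlocal decomposition $f=f_l+r_l$ into a "near" frequency cone and its complement adapted to the $l$-th direction; the $f_l$-contribution is absorbed by the directional maximal operator and a column square function, while the $r_l$-contribution is expressed through the genuine \emph{difference} $M_{2^j}^{2l+1,2^m}-M_{2^j}^{2l,2^m}$, whose multiplier vanishes at first order in the angular variable. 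Only after this subtraction does Plancherel plus a geometric estimate of equidistributed directions produce bona fide orthogonality and a uniform constant. That cancellation step is the missing idea in your proposal: the orthogonality can be extracted from differences of averages (or equivalently from the high-frequency pieces), never from the raw Kakeya averages themselves.
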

Combining the noncommutative Marcinkiewicz interpolation theorem (see \cite{JX07}), together with a trivial weak type $(1,1)$ bound  and a strong $(\infty,\infty)$ bound of $\mathcal{K}_R$, we immediately get the following corollary.
\begin{corollary}
Let $\mathcal{K}_R$ be defined in \eqref{d:21Kak}. Then for any $1<p<\infty$, we have
\Bes
\|\sup_{R\in \mathcal{R}_N} \mathcal{K}_R\|_{L_p(\N)\rta L_p(\N)}\lc
\begin{cases}
N^{\fr 2p-1}(\log N)^{\fr{2}{p'}},\,&\hbox{if}\,\,\ 1<p<2;\\
(\log N)^{\fr 2p},\,&\hbox{if}\,\,\ 2\leq p<\infty.
\end{cases}
\Ees
\end{corollary}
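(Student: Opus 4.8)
The plan is to deduce both ranges of the corollary from Theorem~\ref{t:21Kak} by interpolating it against two elementary endpoint estimates for the noncommutative maximal operator attached to the family $\{\mathcal{K}_R\}_{R\in\mathcal{R}_N}$, using the noncommutative Marcinkiewicz interpolation theorem of \cite{JX07}. First I would record three inputs. Since $\mathcal{K}_R$ is an average, for every positive $f\in L_\infty(\N)$ and every $R\in\mathcal{R}_N$ one has $\mathcal{K}_Rf\leq\|f\|_{L_\infty(\N)}$, hence $\|\sup_{R\in\mathcal{R}_N}\mathcal{K}_Rf\|_{L_\infty(\N)}\leq\|f\|_{L_\infty(\N)}$; that is, the maximal operator is of strong type $(\infty,\infty)$ with constant $\leq 1$. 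Next, as noted in the discussion preceding Theorem~\ref{t:21Kak}, each $R\in\mathcal{R}_N$ sits inside a cube $Q$ with $l(Q)$ equal to the long side of $R$, so $\mathcal{K}_Rf\leq N\,M_Qf$ for positive $f$; combined with the weak type $(1,1)$ of the noncommutative Hardy--Littlewood maximal operator from \cite{Mei07}, this shows $\sup_{R\in\mathcal{R}_N}\mathcal{K}_R$ is of weak type $(1,1)$ with constant $\lesssim N$. Finally, Theorem~\ref{t:21Kak} is exactly the strong type $(2,2)$ bound with constant $\lesssim\log N$.

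For $2\leq p<\infty$ I would interpolate the $(2,2)$ bound against the $(\infty,\infty)$ bound: writing $\tfrac1p=\tfrac\theta2$, so that $\theta=\tfrac2p\in(0,1]$, the noncommutative Marcinkiewicz theorem yields $\|\sup_{R\in\mathcal{R}_N}\mathcal{K}_R\|_{L_p(\N)\to L_p(\N)}\lesssim(\log N)^{\theta}=(\log N)^{2/p}$, which is the asserted bound in that range (at $p=2$ this just recovers Theorem~\ref{t:21Kak}). For $1<p<2$ I would instead interpolate the weak type $(1,1)$ bound against the strong type $(2,2)$ bound: writing $\tfrac1p=(1-\theta)+\tfrac\theta2=1-\tfrac\theta2$, so that $\theta=\tfrac2{p'}\in(0,1)$, the same theorem gives $\|\sup_{R\in\mathcal{R}_N}\mathcal{K}_R\|_{L_p(\N)\to L_p(\N)}\lesssim N^{1-\theta}(\log N)^{\theta}=N^{1-2/p'}(\log N)^{2/p'}$, and since $1-\tfrac2{p'}=1-2(1-\tfrac1p)=\tfrac2p-1$, this is precisely $N^{2/p-1}(\log N)^{2/p'}$.

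The remaining points are routine: tracking the interpolation exponents as above, and checking that the Marcinkiewicz theorem of \cite{JX07} applies in the form needed, namely to the maximal map $f\mapsto(\mathcal{K}_Rf)_{R\in\mathcal{R}_N}$ regarded as an operator into the spaces $L_p(\N;\ell_\infty(\mathcal{R}_N))$, with weak/strong type interpreted through the norms of those spaces (equivalently, through the supremum over finite subsets $J\subset\mathcal{R}_N$, cf. Subsection~\ref{s:212max}). The only item requiring a little care is that the endpoint at $1$ is a genuine weak type estimate rather than a strong one, so one must invoke the Marcinkiewicz (not the complex) interpolation theorem for the range $1<p<2$; this is exactly the situation covered in \cite{JX07}. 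With all three endpoint inputs in hand, I do not anticipate any substantive obstacle.
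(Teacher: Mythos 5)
Your proposal is correct and follows essentially the same route as the paper: the paper deduces the corollary by combining the $L_2$ bound of Theorem \ref{t:21Kak} with the trivial weak type $(1,1)$ bound (constant $\lesssim N$, via domination by the Hardy--Littlewood average) and the strong $(\infty,\infty)$ bound, through the noncommutative Marcinkiewicz interpolation theorem of \cite{JX07}. Your exponent bookkeeping ($\theta=2/p'$ for $1<p<2$, $\theta=2/p$ for $p\geq 2$) matches the stated bounds, and applying the interpolation to the family $\{\mathcal{K}_R\}$ with maximal norms in $L_p(\N;\ell_\infty(\mathcal{R}_N))$ is exactly how the paper uses \cite{JX07}.
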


It should be pointed out that the bound $\log N$ in Theorem \ref{t:21Kak} is sharp even in the commutative case,
see \cite[Proposition 5.3.4]{Gra14M}.
A. C\'ordoba \cite{Cor77} first obtained the boundedness of the Kakeya maximal function on $L_2(\R^2)$ with norm $(\log N)^2$.
The sharp bound $\log N$ was later established by J. O. Str\"omberg \cite{Str78} where he used several estimates of distribution functions and some geometric constructions.
S. Wainger \cite{Wai79} also obtained the sharp bound $\log N$ without a proof but he mentioned that the idea given by A. Nagel, E. M. Stein, S. Wainger \cite{NSW78} can be modified to his setting.
It is well-known that the distribution function is difficult to deal with in the noncommutative setting (for example the weak $(1,1)$ boundedness problem is a challenge problem).
Hence the method from J. O. Str\"omberg \cite{Str78} may be difficult to be applied in the noncommutative setting. The strategies used in our proof below are the Fourier transform, square and maximal function theories and the microlocal decomposition, which are mainly motivated by  \cite{NSW78} and  \cite{Wai79}.

Before giving the proof of Theorem \ref{t:21Kak}, we introduce the noncommutative   directional Hardy-Littlewood average operator defined by
\Be\label{e:21Mdirec}
M^e_{h}(f)(x)=\fr{1}{2h}\int_{-h}^{h}f(x-ey)dy,
\Ee
where $e$ is a unit vector in $\R^2$. By using the standard method of rotation, the definition of maximal norm in \eqref{e:21positivem} and the fact one dimensional noncommutative Hardy-Littlewood maximal operator is of strong type $(p,p)$ for $1<p\leq\infty$ (see \cite{Mei07}), the author and his collaborators recently established  the following result in \cite[Lemma 6.3]{HLX20}.

\begin{lemma}\label{l:21Mdirec}
Let $e$ be a unit vector. Define $M_h^e$ in \eqref{e:21Mdirec}.
Let $1<p\leq\infty$. Then we have
\Bes
\|\sup_{h>0}M_h^e(f)\|_{L_p(\N)}\lc\|f\|_{L_p(\N)}.
\Ees
\end{lemma}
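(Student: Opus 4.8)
The plan is to reduce the estimate, via a rotation of the plane together with a splitting of the two Euclidean variables, to the one-dimensional noncommutative Hardy--Littlewood maximal inequality of \cite{Mei07} over a suitably enlarged coefficient von Neumann algebra; this is the ``method of rotation'' alluded to above. First I would eliminate the direction $e$. Each $\rho\in SO(2)$ induces a normal $*$-automorphism $\alp_\rho$ of $\N=L_\infty(\R^2)\tens\M$ acting on the first tensor leg by $(\alp_\rho g)(x)=g(\rho^{-1}x)$ and by the identity on $\M$; since $\rho$ preserves Lebesgue measure, $\alp_\rho$ preserves the trace $\vphi$, hence is isometric on every $L_p(\N)$ and, more generally, on every $L_p(\N;\ell_\infty(\Lam))$. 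A change of variables in the defining integral gives the intertwining relation $\alp_\rho\circ M_h^e=M_h^{\rho e}\circ\alp_\rho$ for all $h>0$, so choosing $\rho$ with $\rho e=e_1:=(1,0)$ reduces everything to the single direction $e_1$.

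Next I would peel off the variable on which $M_h^{e_1}$ does not act. Write $\R^2=\R_s\times\R_t$, set $\widetilde{\M}:=L_\infty(\R_t)\tens\M$ with trace $\widetilde{\tau}:=\int_\R dt\otimes\tau$, and identify $\N=L_\infty(\R_s)\tens\widetilde{\M}$ isometrically. Then
\[
M_h^{e_1}(f)(s,t)=\fr{1}{2h}\int_{-h}^{h}f(s-y,t)\,dy=\fr{1}{2h}\int_{s-h}^{s+h}f(u,t)\,du
\]
is exactly the one-dimensional \emph{centered} Hardy--Littlewood averaging operator acting in the $s$ variable, amplified by the identity of $\widetilde{\M}$. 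The one-dimensional noncommutative Hardy--Littlewood maximal theorem (see \cite{Mei07}), which is valid with an arbitrary semifinite von Neumann algebra as coefficient algebra, then yields, for $1<p\le\infty$,
\[
\big\|\sup_{h>0}M_h^{e_1}(f)\big\|_{L_p(L_\infty(\R_s)\tens\widetilde{\M})}\lc\|f\|_{L_p(L_\infty(\R_s)\tens\widetilde{\M})}=\|f\|_{L_p(\N)},
\]
which is precisely the claimed bound for $e=e_1$; combined with the rotation reduction this proves the lemma. (For $p=\infty$ the claim is trivial, since $\|M_h^e(f)\|_\N\le\|f\|_\N$ for each $h$ by translation invariance of the norm.) To make the passage to the maximal norm transparent one may first reduce to $f\ge0$, using that $M_h^e$ is a positive linear map and that a general $f$ is a linear combination of four positive elements; for positive $f$ the family $\{M_h^e(f)\}_{h>0}$ is positive, and the characterization \eqref{e:21positivem} of the maximal norm (extended to a continuous index set as noted after its definition) converts the single positive dominating element supplied by the one-dimensional theorem into the desired maximal-norm estimate.

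There is essentially no analytic difficulty here, so the main point requiring care is bookkeeping: checking that $\alp_\rho$ is genuinely trace-preserving, hence isometric on the relevant $L_p(\N;\ell_\infty)$ spaces, and that it intertwines the $M_h^e$ exactly as stated; and verifying that the one-dimensional result of \cite{Mei07} is being invoked with the \emph{centered} averaging family indexed by $h>0$ and with the coefficient algebra taken to be $\widetilde{\M}=L_\infty(\R_t)\tens\M$ rather than $\M$ itself, which is legitimate precisely because that theory is insensitive to the choice of the semifinite coefficient algebra.
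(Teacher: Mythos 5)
Your argument is correct and is essentially the proof the paper has in mind: the lemma is quoted from \cite[Lemma 6.3]{HLX20}, which the author describes as being obtained exactly by the method of rotation, the positive-element characterization \eqref{e:21positivem} of the maximal norm, and the one-dimensional noncommutative Hardy--Littlewood maximal inequality of \cite{Mei07} with a general semifinite coefficient algebra. Your rotation intertwining, the splitting $\N=L_\infty(\R_s)\tens(L_\infty(\R_t)\tens\M)$, and the reduction to positive $f$ supply precisely the bookkeeping that citation suppresses, so nothing further is needed.
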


Now we are in a position to prove Theorem \ref{t:21Kak}.
\begin{proof}[Proof of Theorem \ref{t:21Kak}]
Let us start with several reductions.
Without loss of generality, we suppose that $f$ is positive since the general case just follows by decomposing $f$ as linear combination of four positive functions.

To prove our estimate, it suffices to consider the case $N=2^m$. In fact, assume that we show this theorem for $N=2^m$, we can prove the general case for arbitrary $N$ as follows.
For any positive integer $N$, there exists a positive integer $m$ such that $2^{m-1}<N\leq 2^m$, i.e. $m-1<\log N\leq m$.
Then for any $R\in \mathcal{R}_N$, by enlarging the long side of $R$ such that its eccentricity increases to $2^m$, we get a new rectangle $\tilde{R}$. Then
$
\mathcal{K}_Rf(x)\leq \fr{2^m}{N}\mathcal{K}_{\tilde{R}}f(x).
$
Therefore we get
\Bes
\|\sup_{R\in \mathcal{R}_N}\mathcal{K}_Rf\|_{L_2(\N)}\leq \fr{2^m}{N}\|\sup_{R\in \mathcal{R}_{2^m}}\mathcal{K}_Rf\|_{L_2(\N)}\lc m\|f\|_{L_2(\N)}\approx(\log N)\|f\|_{L_2(\N)}.
\Ees

Similar to that in the commutative case, by symmetry and rotation, we only need to control the average over those rectangles that have eccentricity $N$, but whose major axes make angles $\tet_k$ with the $x$-axis such that $\tan\tet_k=k/N, k=0,\cdots,N-1$ (see e.g. Section 3.11, Chapter X in \cite{Ste93}).
We abuse notation and still define $\mathcal{R}_N$ as those preceding rectangles.

After these reductions, below we smooth the average operator. Set $u^N_k=(N,k),k=0,\cdots,N-1$ and let $e_2=(0,1)\in\R^2$.
Choose a nonnegative, radially decreasing and smooth function $\psi$ such that $\psi(x)=1$ if $|x|<\fr12$ and $\supp\psi\subset\{|x|<2\}$.
Define $\psi_h(t)=h^{-1}\psi(t/h)$.
To prove our theorem, it is sufficient to consider the average operator
\Bes
A_{h}^{k,N}(f)(x)=\int_{\R}\int_{\R}f(x-u^N_kt-e_2s)\psi_{h}(t)\psi_{h}(s)dsdt,\ h>0.
\Ees
Indeed, since $f\geq0$, by some elementary geometric observation, for any $R\in \mathcal{R}_N$ with the major direction $e^{i\tet}$ where $\tan\tet=\fr kN$ for some $k=0,\cdots,N-1$, there exists $h$ such that
\Bes
\mathcal{K}_Rf(x)\leq CA_{h}^{k,N}(f)(x)
\Ees
where the constant $C$ is independent of $k,N,f$.
On the other hand, for $0\leq k\leq N-1$, $h\in\R_+$, there  exists $R\in \mathcal{R}_N$ with the major direction $e^{i\tet}$ such that $\tan\tet=\fr kN$,
\Bes
A_h^{k,N}(f)(x)\leq C \mathcal{K}_R f(x).
\Ees
Therefore by the definition of positive maximal norm in \eqref{e:21positivem}, we get
\Bes
\Big\|\sup_{R\in \mathcal{R}_N} \mathcal{K}_R f\Big\|_{L_2(\N)}{\approx}\Big\|\sup_{0\leq k<N \atop h>0}A_h^{k,N}(f)\Big\|_{L_2(\N)}.
\Ees

Notice that there are two averages for two different directions in the operator $A_h^{k,N}$.
Our next goal is to reduce it to one average with the help of the directional Hardy-Littlewood maximal operator.
By our choice of $\psi$, it is easy to see that
\Bes
A_h^{k,N}(f)(x)\lc M^{e_2}_{2h}[M_{h}^{k,N}(f)](x)
\Ees
where $M_{2h}^{e_2}$ is the directional Hardy-Littlewood average operator defined in \eqref{e:21Mdirec} and $M_{h}^{k,N}(f)$ is defined as
\Bes
M_h^{k,N}(f)(x)=\int_{\R}f(x-u^N_kt)\psi_{h}(t)dt.
\Ees
Therefore by the $L_2$ boundedness of maximal operator of $M^e_h$ in Lemma \ref{l:21Mdirec}, to prove our theorem, it suffices to show
\Be\label{e:21MhkN}
\Big\|\sup_{0\leq k<N\atop h>0}M_h^{k,N}(f)\Big\|_{L_2(\N)}\lc (\log N)\|f\|_{L_2(\N)}.
\Ee

Next we reduce the study of $M_{h}^{k,N}(f)(x)$ to its lacunary case $M_{2^j}^{k,N}(f)(x)$. For any $h>0$, there exists $j\in\Z$ such $2^{j-1}\leq h<2^{j}$. Then we get
\Bes
M_{h}^{k,N}(f)(x)=\fr1h\int_{\R}f(x-u^N_kt)\psi(\fr th)dt\lc\fr{1}{2^j}\int_{\R}f(x-u^N_kt)\psi(\fr{t}{2^j})dt=M_{2^j}^{k,N}(f)(x),
\Ees
where the second inequality just follows from the radially decreasing property of $\psi$.
Hence we only need to consider the lacunary operator $M_{2^j}^{k,N}(f)(x)$.
At present time, we conclude that the proof of our main theorem is reduced to show
\Be\label{e:21mdyad}
\Big\|\sup_{0\leq k<N\atop j\in\Z}M_{2^j} ^{k,N}(f)\Big\|_{L_2(\N)}\lc(\log N)\|f\|_{L_2(\N)}.
\Ee

Let $N=2^m$. In the following, we will establish a key inequality
\Be\label{e:21Kakeya}
\Big\|\sup_{0\leq k<2^m\atop j\in\Z} M_{2^j}^{k,2^m}(f)\Big\|_{L_2(\N)}\leq C\|f\|_{L_2(\N)}+\Big\|\sup_{0\leq k<2^{m-1}\atop j\in\Z} M_{2^j}^{k,2^{m-1}}(f)\Big\|_{L_2(\N)}
\Ee
with the constant $C$ independent of $m$. Notice that when $m=1$, $M_{2^j}^{0,1}(f)\leq M^{e_1}_{2^{j+1}}(f)$, where $e_1=(1,0)$ and $M_h^e$ is the noncommutative directional Hardy-Littlewood average operator in \eqref{e:21Mdirec}.
Using the $L_2$ boundedness of maximal $M_h^e$ in Lemma \ref{l:21Mdirec}, we get
$$\Big\|\sup_{j\in\Z} M_{2^j}^{0,1}(f)\Big\|_{L_2(\N)}\lc\|f\|_{L_2(\N)}.$$
Then it is easy to see that the required estimate \eqref{e:21mdyad} just follows from \eqref{e:21Kakeya} with an induction argument.

The rest of this section is devoted to the proof of \eqref{e:21Kakeya}. For convenience, set $e_{m}^l=(1,\fr{l}{2^{m}})$.  Define $\Ga_l=\{\xi:|\inn{\fr{\xi}{|\xi|}}{e_m^l}|\leq \fr{c}{2^m}\}$ where $c$ is a constant independent of $m$ such that $\Ga_1, \Ga_2, \cdots, \Ga_{2^{m-1}-1}$ are disjoint from each other.
Notice that $\Ga_l$s are equally distributed in $\{(x,y): |x|\leq |y|,x\leq0,y\geq0\}$ or $\{(x,y): |x|\leq |y|, x\geq0, y\leq0\}$ (see Figure \ref{f:21max} later). We split $f$ as $f_l$ and $r_l$ which are defined by
\Bes
\widehat{f_l}(\xi)=\chi_{\Ga_{2l+1}\cup\Ga_{2l}}(\xi)\widehat{f}(\xi),\quad
\widehat{r_l}(\xi)=\chi_{\Ga_{2l+1}^c\cap\Ga^c_{2l}}(\xi)\widehat{f}(\xi).
\Ees

Recall that $\vphi=\int_{\R^2}dx\otimes\tau$. Since $M_{2^j}^{k,N}(f)(x)$ is positive in $L_2(\N)$, by the duality (see (ii) in Lemma \ref{l:21dual}), there exists a positive sequence $\{h_{2^j}^k\}\in L_2(\N;\ell_1)$ with norm $\|\sum\limits_{0\leq k<2^m\atop j\in\Z}h_{2^j}^k\|_{L_2(\N)}\leq 1$ such that
\Bes
\begin{split}
\Big\|\sup_{0\leq k<2^m\atop j\in\Z} M_{2^j}^{k,2^m}(f)\Big\|_{L_2(\N)}&=\vphi\Big(\sum_{0\leq k<2^m\atop j\in\Z}M_{2^j}^{k,2^m}(f)h_{2^j}^k\Big)\leq I+II
\end{split}
\Ees
where
$$I=\Big|\vphi\Big(\sum_{0\leq k<2^m\atop j\in\Z}[M_{2^{j+1}}^{\lfloor\fr k2\rfloor,2^{m-1}}(f)]h_{2^j}^k\Big)\Big|,$$
$$II=\Big|\vphi\Big(\sum_{0\leq k<2^m \atop j\in\Z}[M_{2^j}^{k,2^m}(f)-M_{2^{j+1}}^{\lfloor \fr k2\rfloor,2^{m-1}}(f)]h_{2^j}^k\Big)\Big|.$$

We consider the first term $I$. Notice that for every $0\leq k<2^{m-1}$, we have
\Be\label{e:21evenodd}
M_{2^{j+1}}^{\lfloor\fr{2k}{2}\rfloor,2^{m-1}}(f)=M_{2^{j+1}}^{\lfloor\fr{2k+1}{2}\rfloor,2^{m-1}}(f).
\Ee
Recall that $M_{2^{j+1}}^{k,2^{m-1}}(f)$ is positive in $\N$. Then by the duality (see (ii) in Lemma \ref{l:21dual}), the definition of $L_2(\N;\ell_\infty)$  in \eqref{e:21positivem} and the preceding equality \eqref{e:21evenodd}, we get
\Bes
\begin{split}
I&\leq\Big\|\sup_{0\leq k<2^m\atop j\in\Z}M_{2^{j+1}}^{\lfloor\fr k2\rfloor,2^{m-1}}(f)\Big\|_{L_2(\N)}=\Big\|\sup_{0\leq k<2^{m-1}\atop j\in\Z}M_{2^{j+1}}^{k,2^{m-1}}(f)\Big\|_{L_2(\N)}\\
&=\Big\|\sup_{0\leq k<2^{m-1}\atop j\in\Z}M_{2^{j}}^{k,2^{m-1}}(f)\Big\|_{L_2(\N)},
\end{split}
\Ees
which is exact the second term in right side of \eqref{e:21Kakeya}.

Now we turn to $II$. To finish the proof of \eqref{e:21Kakeya}, we only need to show that $II$ is controlled by $\|f\|_{L_2(\N)}$.
By making a dilation, it is easy to check that for $0\leq l<2^{m-1}$,
\Be\label{e:21dia}
M_{2^{j+1}}^{l,2^{m-1}}(f)(x)=M_{2^j}^{2l,2^m}(f)(x).
\Ee
Therefore we see that the terms related to even $k$ in the sum of $II$ equal to zero. Applying \eqref{e:21evenodd} and \eqref{e:21dia} again, we rewrite  the odd terms in $II$  as follows
\Bes
\begin{split}
II&=\Big|\vphi\Big(\sum_{0\leq l<2^{m-1}\atop j\in\Z}[M_{2^j}^{2l+1,2^m}(f)-M_{2^j}^{2l,2^m}(f)]h_{2^j}^{2l+1}\Big)\Big|\\
&\leq \Big|\vphi\Big(\sum_{0\leq l<2^{m-1}\atop j\in\Z}[M_{2^j}^{2l+1,2^m}(f_l)h_{2^j}^{2l+1}]\Big)\Big|+\Big|\vphi\Big(\sum_{0\leq l<2^{m-1}\atop j\in\Z}[M_{2^j}^{2l,2^m}(f_l)h_{2^j}^{2l+1}]\Big)\Big|\\
&\quad+\Big|\vphi\Big(\sum_{0\leq l<2^{m-1}\atop j\in\Z}[M_{2^j}^{2l+1,2^m}(r_l)-M_{2^j}^{2l,2^m}(r_l)]h_{2^j}^{2l+1}\Big)\Big|
=:II_1+II_2+II_3.
\end{split}\Ees

Let us consider $II_1$ firstly. Using the duality in (ii) of Lemma \ref{l:21dual}, we get
\Bes
II_1\leq
\big\|\sup_{0\leq l<2^{m-1}\atop j\in\Z}M_{2^j}^{2l+1,2^m}(f_l)\big\|_{L_2(\N)}.
\Ees
Recall $e_m^l=(1,\fr{l}{2^m})$. Let $\tilde{e}_{m}^{2l+1}$ be the unit vector in the direction along $e_{m}^{2l+1}$, i.e.  $\tilde{e}_{m}^{2l+1}={e}_{m}^{2l+1}/|{e}_{m}^{2l+1}|$.
Then it is straightforward to verify that for a positive function $g$,
\Bes
M_{2^j}^{2l+1,2^m}(g)(x)\lc M^{\tilde{e}_{m}^{2l+1}}_{2^{j+m}}(g)(x),
\Ees
where the right side of the above inequality is the directional Hardy-Littlewood average operator defined in \eqref{e:21Mdirec}.
Now using the  $L_2$ boundedness of maximal operator of $M^{e}_{h}$ in Lemma \ref{l:21Mdirec}, we get for any positive function $g\in L_2(\N)$,
\Be\label{e:21B2jl}
\big\|\sup_{j\in\Z}M_{2^j}^{2l+1,2^m}(g)\big\|_{L_2(\N)}\lc\|g\|_{L_2(\N)}.
\Ee
Consequently \eqref{e:21B2jl} holds for any $g\in L_2(\N)$ by decomposing it as linear combination of four positive functions.

Observe that $M_{2^j}^{2l+1,2^m}(f_l)$ may not be positive. So we can not apply the maximal norm in \eqref{e:21positivem}. Recall that for any $a\in\N$, we can decompose it as linear combination of two self-adjoint elements:
\Bes
a=\Re(a)+i\Im(a),\quad\text{where}\quad \Re(a)=\fr12{(a+a^*)},\ \Im(a)=\fr{1}{2i}(a-a^*).
\Ees
Hence we can write $M_{2^j}^{2l+1,2^m}(f_l)=\Re[M_{2^j}^{2l+1,2^m}(f_l)]+i\Im[M_{2^j}^{2l+1,2^m}(f_l)]$. Then utilizing Minkowski's inequality, we get
\Bes
II_1\leq
\big\|\sup_{0\leq l<2^{m-1}\atop j\in\Z}\Re[M_{2^j}^{2l+1,2^m}(f_l)]\big\|_{L_2(\N)}+
\big\|\sup_{0\leq l<2^{m-1}\atop j\in\Z}\Im[M_{2^j}^{2l+1,2^m}(f_l)]\big\|_{L_2(\N)}.
\Ees

We first consider the real part. Notice that $M_{2^j}^{2l+1,2^m}(f_l)^*=M_{2^j}^{2l+1,2^m}(f_l^*)$. Then by Minkowski's inequality and \eqref{e:21B2jl}, we get
\Bes
\begin{split}
\big\|\sup_{ j\in\Z}\Re[M_{2^j}^{2l+1,2^m}(f_l)]\big\|_{L_2(\N)}\leq&
\fr12\big\|\sup_{j\in\Z} M_{2^j}^{2l+1,2^m}(f_l)\big\|_{L_2(\N)}\\
&+\fr12
\big\|\sup_{ j\in\Z}M_{2^j}^{2l+1,2^m}(f_l^*)\big\|_{L_2(\N)}\lc\|f_l\|_{L_2(\N)}.
\end{split}
\Ees
Rewrite this estimate via the equivalent definition of $L_p(\M;\ell_\infty)$ in \eqref{e:21selfadjointm}, we obtain that there exists $F_l>0$ such that for each $j\in\Z$,
$$-F_l\leq \Re[M_{2^j}^{2l+1,2^m}(f_l)]\leq F_l \quad\text{and}\quad \|F_l\|_{L_2(\N)}\lc \|f_l\|_{L_2(\N)}.$$
Then by setting $F=(\sum_lF_l^2)^{\fr12}$, we see that for each $0\leq l<2^{m-1},j\in\Z$, we have $-F\leq \Re[M_{2^j}^{2l+1,2^m}(f_l)]\leq F$ because of $F_l\leq(\sum_lF_l^2)^{\fr12}$ which is just by an elementary inequality \eqref{e:21mono01}.
Moreover we get
\Bes
\begin{split}
\|F\|_{L_2(\N)}&=(\sum_{l}\|F_l\|_{L_2(\N)}^2)^{\fr12}\lc(\sum_{l}\|f_l\|_{L_2(\N)}^2)^{\fr12}\\
&=\Big(\tau\int_{\R^2}\sum_{l}\chi_{\Ga_{2l+1}\cup\Ga_{2l}}(\xi)|\wh{f}(\xi)|^2d\xi\Big)^{1/2}\lc\|f\|_{L_2(\N)}
\end{split}
\Ees
where in the third equality we use vector-valued Plancherel's theorem \eqref{e:21planch}, the last inequality just follows from $\Ga_{1},\cdots,\Ga_{2^{m-1}-1}$ are disjoint from each other and vector-valued Plancherel's theorem \eqref{e:21planch} again. Thus we prove that
\Bes
\big\|\sup_{0\leq l<2^{m-1}\atop j\in\Z}\Re[M_{2^j}^{2l+1,2^m}(f_l)]\big\|_{L_2(\N)}\lc\|f\|_{L_2(\N)}.
\Ees
By applying the similar argument to the imaginary part, we could also get
\Bes
\big\|\sup_{0\leq l<2^{m-1}\atop j\in\Z}\Im[M_{2^j}^{2l+1,2^m}(f_l)]\big\|_{L_2(\N)}\lc\|f\|_{L_2(\N)}.
\Ees
Combining these estimates of real and imaginary parts, we obtain the desired estimate of $II_1$.

For the term $II_2$, using the similar argument as we have done in the proof of $II_1$, we also get that $II_2$ is bounded by $\|f\|_{L_2(\N)}$.

At last we turn to the term $II_3$. We first introduce an inequality as follows:
\Be\label{e:21Cach}
|\vphi(ab)|^2\leq\vphi(|a|b)\vphi(|a^*|b),\quad \forall\ a,b\in\N\  \text{with}\ b\geq0.
\Ee
This inequality could be verified by writing $a$ as the polar decomposition $a=u|a|$ and using Cauchy-Schwarz's inequality,
\Bes
\begin{split}
|\vphi(ab)|^2&=|\vphi(b^{1/2}u|a|b^{1/2})|^2\leq\vphi(b^{1/2}u|a|u^*b^{1/2})\vphi(b^{1/2}|a|b^{1/2})\\
&=\vphi(b^{1/2}|a^*|b^{1/2})\vphi(b^{1/2}|a|b^{1/2})=\vphi(|a^*|b)\vphi(|a|b).
\end{split}
\Ees

For simplicity, we define $B_{2^j}^{l,2^m}(g)=M_{2^j}^{2l+1,2^m}(g)-M_{2^j}^{2l,2^m}(g)$. Then by the above inequality \eqref{e:21Cach} and Cauchy-Schwarz's inequality, we have
\Bes
\begin{split}
II_3&\leq\sum_{0\leq l<2^{m-1}\atop j\in\Z}\vphi(|B_{2^j}^{l,2^m}(r_l)|h_{2^j}^{2l+1})^{\fr12}\vphi(|B_{2^j}^{l,2^m}(r_l)^*|h_{2^j}^{2l+1})^{\fr12}\\
&\leq\vphi\Big(\sum_{0\leq l<2^{m-1}\atop j\in\Z}|B_{2^j}^{l,2^m}(r_l)|h_{2^j}^{2l+1}
\Big)^{\fr12}\cdot\vphi\Big(\sum_{0\leq l<2^{m-1}\atop j\in\Z}|B_{2^j}^{l,2^m}(r_l)^*|h_{2^j}^{2l+1}
\Big)^{\fr12}\\
&\leq\Big\|\sup_{0\leq l<2^{m-1}\atop j\in\Z}|B_{2^j}^{l,2^m}(r_l)|\Big\|_{L_2(\N)}^{\fr12}
\cdot\Big\|\sup_{0\leq l<2^{m-1}\atop j\in\Z}|B_{2^j}^{l,2^m}(r_l)^*|\Big\|_{L_2(\N)}^{\fr12},
\end{split}
\Ees
where in the last inequality we apply the dual property (ii) in Lemma \ref{l:21dual}.

We first consider the part $|B_{2^j}^{l,2^m}(r_l)|$. Our goal is to show that
\Be\label{e:21Bl}
\Big\|\sup_{0\leq l<2^{m-1}\atop j\in\Z}|B_{2^j}^{l,2^m}(r_l)|\Big\|_{L_2(\N)}\lc\|f\|_{L_2(\N)}.
\Ee
The strategy here is to use a square function to control the maximal function, which has been appeared in the proof of $II_1$. In fact applying an equivalent norm of $L_p(\M;\ell_\infty)$ in \eqref{e:21positivem},   monotone properties in \eqref{e:21mono01} and \eqref{e:21mono}, we have
\Bes
\Big\|\sup_{0\leq l<2^{m-1}\atop j\in\Z}|B_{2^j}^{l,2^m}(r_l)|\Big\|_{L_2(\N)}
\leq\Big\|\Big(\sum_{0\leq l<2^{m-1}\atop j\in\Z}|B_{2^j}^{l,2^m}(r_l)|^2\Big)^{\fr12}\Big\|_{L_2(\N)}.
\Ees
It is straightforward to show that $\mathcal{F}[M_{2^j}^{2l,2^m}(g)](\xi)=\wh\psi(2^{j+m}\inn{e^{2l}_m}{\xi})\widehat{g}(\xi)$.
Then utilizing vector-valued Plancherel's theorem \eqref{e:21planch} and the definitions of $B_{2^j}^{l,2^m}$, $r_l$, we get that
\Bes
\begin{split}
&\Big\|\Big(\sum_{0\leq l<2^{m-1}\atop j\in\Z}|B_{2^j}^{l,2^m}(r_l)|^2\Big)^{\fr12}\Big\|_{L_2(\N)}^2
\\
&=\tau\int_{\R^2}\sum_{j\in\Z}\sum_{0\leq l<2^{m-1}}\Big|\wh{\psi}\Big(2^{j+m}\inn{e_{m}^{2l+1}}{\xi}\Big)-\wh{\psi}\Big(2^{j+m}\inn{e_{m}^{2l}}{\xi}\Big)\Big|^2
\chi_{\Ga_{2l+1}^c\cap\Ga_{2l}^c}(\xi)|\wh{f}(\xi)|^2d\xi.
\end{split}
\Ees

To prove \eqref{e:21Bl}, again by vector-valued Plancherel's theorem \eqref{e:21planch}, we only need to show the multiplier above is bounded, i.e.
\Be\label{e:21multi}
\sum_{j\in\Z}\sum_{0\leq l<2^{m-1}}\Big|\wh{\psi}\Big(2^{j+m}\inn{e_m^{2l+1}}{\xi}\Big)-\wh{\psi}\Big(2^{j+m}\inn{e_m^{2l}}{\xi}\Big)\Big|^2\chi_{\Ga_{2l+1}^c\cap\Ga_{2l}^c}(\xi)\leq C
\Ee
holds uniformly for $\xi\neq0$.
Fix $\xi\neq0$. It suffices to consider the sum of $l$ such that $\xi\in\Ga_{2l+1}^c\cap \Ga_{2l}^{c}$ which means that
\Bes
|\inn{e_m^{2l+1}}{\xi'}|>c2^{-m},\quad |\inn{e_m^{2l}}{\xi'}|>c2^{-m}
\Ees
where $\xi'=\xi/|\xi|$. Such lower estimates may be not enough to prove \eqref{e:21multi}.
In the following, we obtain some better lower estimates via some geometric observations of the plane. Denote $L=\{l:\ 0\leq l<2^{m-1}, \xi\in\Ga_{2l+1}^c\cap\Ga_{2l}^c\}$.
Then by using the mean value formula and Cauchy-Schwarz's inequality, we see that
\Bes
\begin{split}
\text{LHS}\eqref{e:21multi}&
\leq\sum_{j\in\Z}\sum_{l\in L}\Big[\int_0^1 2^{j}|\xi|\cdot|\nabla \wh{\psi}(2^{j+m}\inn{e_{m}^{2l+s}}{\xi})|ds\Big]^2\\
&\leq\int_0^1\sum_{j\in\Z}\sum_{l\in L}\big[ 2^{j}|\xi|\cdot|\nabla \wh{\psi}(2^{j+m}\inn{e_m^{2l+s}}{\xi})|\big]^2ds.
\end{split}
\Ees

Let $e^\bot$ denote the orthogonal unit vector of $e$ in $\R^2$.
Then we observe that  $e_{m}^{l,\bot}$s, where $l=0,\cdots,2^{m-1}-1$, are equally distributed with distance $2^{-m}$ in the plane $\{(x,y): |x|\leq|y|, x\leq0,y\geq0\}$ or $\{(x,y): |x|\leq|y|, x\geq0,y\leq0\}$(see Figure \ref{f:21max}  below).

\begin{figure}[!h]
\centering
\includegraphics[height=0.6\textwidth]{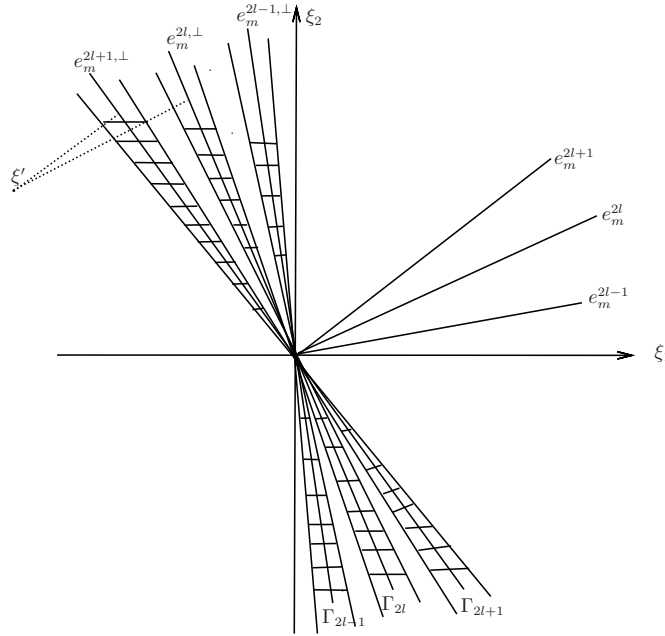}
\caption{\small $\{e_m^l\}_l$ is equally distributed. $\Ga_l$s are disjoint from each other. $|\inn{e_m^{2l}}{\xi'}|$ is the distance between the point $\xi'$ and the line along the direction $e^{2l,\bot}_m$.}\label{f:21max}
\end{figure}

Notice that the geometric illustration of $|\inn{e}{\xi'}|$ is the distance between the point $\xi'$ and the line along the direction $e^\bot$.
Recall that $\xi'$ is a fixed unit vector. By the equally distributed property of $\{e_{m}^{l,\bot}\}_{l\in\{0,\cdots,2^{m-1}-1\}}$, we can separate the set $L$ into at most $2^{m-1}$ subsets $L_1,\cdots,L_{2^{m-1}}$ such that
each $L_i$ has cardinality less than some absolute constant and for each $i$,
\Bes
|\inn{e_m^{2l+1}}{\xi'}|>ci2^{-m},\quad |\inn{e_m^{2l}}{\xi'}|>ci2^{-m},\quad\forall l\in L_i.
\Ees
This fact can be easily seen from the geometry in Figure \ref{f:21max}.
Therefore by the above inequality, for any $s\in [0,1]$, $l\in L_i$, we get $|\inn{e_m^{2l+s}}{\xi'}|>ci2^{-m}$. Since $\wh{\psi}$ is a Schwartz function, so $\nabla \wh{\psi}$ is smooth and rapidly decays at infinity. We finally conclude that
\Bes
\begin{split}
\text{LHS}\eqref{e:21multi}
&\leq\sum_{j\in\Z}\sum_{1\leq i\leq2^{m-1}} 2^{2j}|\xi|^2\min\{1,(2^{j}|\xi|i)^{-100}\}\\
&\lc\sum_{1\leq i\leq 2^{m-1}}i^{-2}\sum_{j\in\Z}[i2^{j}|\xi|]^2[1+(2^{j}|\xi|i)]^{-100}\leq C
\end{split}
\Ees
which proves \eqref{e:21Bl}. For the term $|B_{2^j}^{l,2^m}(r_l)^*|$, we can also obtain
\Bes
\Big\|\sup_{0\leq l<2^{m-1}\atop j\in\Z}|B_{2^j}^{l,2^m}(r_l)^*|\Big\|_{L_2(\N)}\lc\|f\|_{L_2(\N)}
\Ees
by using the same argument if observing that
\Bes
\F[B_{2^j}^{l,2^m}(r_l)^*](\xi)=\Big[\wh{\psi}\Big(-2^{j+m}\inn{e_{m}^{2l+1}}{\xi}\Big)-\wh{\psi}\Big(-2^{j+m}\inn{e_{m}^{2l}}{\xi}\Big)\Big]
\chi_{\Ga_{2l+1}^c\cap\Ga_{2l}^c}(-\xi)\wh{f}(\xi).
\Ees
Combining these estimates of $|B_{2^j}^{l,2^m}(r_l)|$ and $|B_{2^j}^{l,2^m}(r_l)^*|$, we get $II_3$ is majorized by $\|f\|_{L_2(\N)}$ which ends the proof this theorem.
\end{proof}
\vskip0.24cm

\section{Bochner-Riesz means on $L_\infty(\R^2)\tens\M$}\label{s:21BR}

In this section, we study the operator-valued Bochner-Riesz means on $L_p(\N)$, where $\N=L_\infty(\R^2)\tens\M$ throughout this section.
The main tools  are noncommutative Kakeya maximal functions which have been investigated in the previous section. Our main result can be stated as follows.

\begin{theorem}\label{t:21BR}
Let $0<\lam\leq\fr12$ and $\fr{4}{3+2\lam}<p<\fr{4}{1-2\lam}$. Then for the Bochner-Riesz means  $B^\lam_R$ given in \eqref{e:21BRR}, set $B^\lam=B_1^\lam$, we have
\Be\label{e:21BR}
\|B^\lam(f)\|_{L_p(\N)}\lc\|f\|_{L_p(\N)},\quad
\sup_{R>0}\|B_R^\lam(f)\|_{L_p(\N)}\lc\|f\|_{L_p(\N)}.
\Ee
Consequently for $f\in L_p(\N)$, $B_R^\lam(f)$ converges to $f$ in $L_p(\N)$ as $R\rta\infty$.
\end{theorem}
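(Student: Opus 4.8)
The plan is to reduce the estimate to the endpoint case $p=4$ and then exploit a combination of column and row square-function estimates together with the Kakeya bound of Theorem~\ref{t:21Kak}. More precisely, by the duality between $L_p(\N)$ and $L_{p'}(\N)$ one sees that the multiplier $(1-|\xi|^2)_+^\lam$ is self-dual, so it suffices to treat $\frac{4}{3+2\lam}<p\le 2$; and by the noncommutative analytic interpolation theorem (see the appendix) applied to the analytic family $\lam\mapsto B^\lam$, together with the trivial $L_2$ bound for $\lam=0$ and beyond, matters reduce to establishing $\|B^\lam(f)\|_{L_4(\N)}\lc\|f\|_{L_4(\N)}$ for each $0<\lam\le\frac12$. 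Finally, the passage from $B^\lam=B_1^\lam$ to the uniform bound $\sup_{R>0}\|B_R^\lam(f)\|_{L_p(\N)}$ is the standard dilation argument; and the $L_p$-convergence $B_R^\lam(f)\to f$ follows from the uniform bound together with convergence on a dense subclass (e.g. trigonometric-type elements), for which it is elementary.

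The core is therefore the $L_4$ bound for $B^\lam$. First I would make a dyadic decomposition $B^\lam=\sum_{k\ge0}T_k$ in the distance $1-|\xi|$ to the unit circle, where $T_k$ has Fourier support in the annulus $\{|\,|\xi|-1\,|\sim 2^{-k}\}$ and symbol of size $\sim 2^{-k\lam}$; it suffices to prove $\|T_k(f)\|_{L_4(\N)}\lc 2^{-k\eps}\|f\|_{L_4(\N)}$ for some $\eps>0$ when $\lam=\frac12$ (the worst case), so that the series converges. Next I would perform a microlocal angular decomposition $T_k=\sum_l T_{k,l}$, where $T_{k,l}$ localizes $\xi/|\xi|$ to an arc of length $\sim 2^{-k/2}$ around the direction $e^{2\pi i l 2^{-k/2}}$, so that $\Ga_{k,l}=\supp\widehat{T_{k,l}f}$ is a $2^{-k}\times 2^{-k/2}$ "plate." Using
\[
\|T_k(f)\|_{L_4(\N)}^2=\Big\|\sum_{l,l'}T_{k,l}(f)^*T_{k,l'}(f)\Big\|_{L_2(\N)},
\]
I would split the double sum into the near-diagonal part $|l-l'|\le C$ and the far part $|l-l'|>C$. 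The near-diagonal part is controlled, via Cauchy--Schwarz in $l$ and almost-orthogonality, by the column square function $\big\|\big(\sum_l|T_{k,l}(f)|^2\big)^{1/2}\big\|_{L_4(\N)}$; the far part, using the geometric fact that $\{\Ga_{k,l}-\Ga_{k,l'}\}_{l,l'}$ has bounded overlap in the plane (the two-dimensional incidence input of Córdoba/Fefferman type), is controlled by the row square function $\big\|\big(\sum_l|T_{k,l}(f)^*|^2\big)^{1/2}\big\|_{L_4(\N)}$. Both are instances of the noncommutative square-function theory of Section~\ref{s:212squa}. To estimate these square functions I would dualize: $\big\|\big(\sum_l|T_{k,l}(f)|^2\big)^{1/2}\big\|_{L_4(\N)}^2=\big\|\sum_l|T_{k,l}(f)|^2\big\|_{L_2(\N)}=\sup\sum_l\vphi(|T_{k,l}(f)|^2 y)$ over positive $y$ with $\|y\|_{L_2(\N)}\le1$, which by the dual theory of $L_{p'}(\M;\ell_\infty)$--$L_p(\M;\ell_1)$ (Lemma~\ref{l:21dual}) reduces to the bound for the maximal function $\|\sup_l \mathcal{K}_{R_{k,l}}(\cdot)\|_{L_2(\N)}$, where $R_{k,l}$ are rectangles of eccentricity $N\sim 2^{k/2}$ with orientation the major direction of $\Ga_{k,l}$. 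This is exactly where Theorem~\ref{t:21Kak} enters and contributes a factor $\log N\sim k$; one then checks that the $2^{-k/2}\cdot 2^{-k\lam}$-type gains from the symbol size and the number of plates ($\sim 2^{k/2}$) beat this logarithmic loss, producing the needed $2^{-k\eps}$.

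The main obstacle I expect is the kernel analysis showing that $T_{k,l}$ is pointwise dominated (after taking absolute values appropriately and splitting real/imaginary parts) by a sum of Kakeya averages $\mathcal{K}_{R}$ with $R$ in the single fixed direction of $\Ga_{k,l}$ and eccentricity $\sim 2^{k/2}$: this requires carefully tracking the kernel of the one-plate multiplier, which has the form of a wave-packet adapted to a $2^{k}\times 2^{k/2}$ tube, and controlling its tails so that the domination is uniform in $l$ and summable in the dyadic pieces. A secondary difficulty is that $T_{k,l}(f)$ is not positive, so one cannot directly invoke the positive maximal-norm formula \eqref{e:21positivem}; as in the proof of Theorem~\ref{t:21Kak} this is circumvented by decomposing into self-adjoint real and imaginary parts and using \eqref{e:21selfadjointm}, at the cost of some bookkeeping. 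The far-diagonal geometric overlap estimate, though classical in the commutative plane, must also be arranged so that it interfaces cleanly with the row square-function norm and the operator Cauchy--Schwarz inequality \eqref{e:21conv}; once these pieces are in place the summation in $k$ is routine.
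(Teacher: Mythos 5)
Your overall architecture matches the paper's: dyadic decomposition in $1-|\xi|$, angular microlocal decomposition into plates $\Ga_{k,l}$, the identity $\|T_kf\|_{L_4}^2=\|\sum_{l,l'}T_{k,l}(f)^*T_{k,l'}(f)\|_{L_2}$, near-/far-diagonal split, control of the near part by a column square function via H\"older, control of the far part by a row square function via operator Cauchy--Schwarz and the bounded-overlap of $\{\tilde\Ga_{k,l}+\Ga_{k,l'}\}$, pointwise domination of $|T_{k,l}f|^2$ by Kakeya averages in the fixed direction of the plate, and ultimately Theorem~\ref{t:21Kak}. That is also the paper's route, and you correctly flag the kernel-domination step and the geometric overlap interface as the delicate ingredients. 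However, two genuine gaps remain.

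First, the reduction to $p=4$ is not closed. Duality equates the $L_p$ and $L_{p'}$ multiplier norms; after that, interpolating the trivial $L_2$ bound against an $L_4$ bound only yields $4/3\le p\le 4$. To reach the remaining range $4<p<\fr{4}{1-2\lam}$ (equivalently $\fr{4}{3+2\lam}<p<4/3$) one must run the analytic family interpolation (Theorem~\ref{t:21inter}) between $L_4$ at $\Re\lam=\eps$ and the easy $L_\infty$ bound at $\Re\lam>\fr12$, where the kernel of $(1-|\cdot|^2)^\lam_+$ is integrable (Lemma~\ref{l:21bcritical}). Invoking ``the trivial $L_2$ bound for $\lam=0$ and beyond'' does not furnish the second endpoint. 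This in turn requires proving the $L_4$ estimate for complex $\lam$ along a vertical line with at most polynomial growth in $\Im\lam$ (the paper tracks a $(1+|\lam|)^3$ factor), another point the proposal does not address.

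Second, the dualization of the column square function is incomplete as written. Once $|T_{k,l}f|^2\lc(1+|\lam|)^3\,|\F^{-1}[m_{k,l}]|*|f_{k,l}|^2$ with $f_{k,l}$ frequency-localized in a strip $B_{k,l}$, the pairing with a positive $g\in L_2(\N)$ transposes to
\[
\sum_l\vphi\bigl(|f_{k,l}|^2\,\cdot\,|\F[m_{k,l}]|*g\bigr)
\ \le\ \big\|\sup_l |\F[m_{k,l}]|*g\big\|_{L_2(\N)}\cdot\big\|\textstyle\sum_l|f_{k,l}|^2\big\|_{L_2(\N)},
\]
so besides the Kakeya maximal bound you must also control $\big\|(\sum_l|f_{k,l}|^2)^{1/2}\big\|_{L_4(\N)}$ by $\|f\|_{L_4(\N)}$. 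That is a genuinely nontrivial ingredient: a noncommutative Littlewood--Paley--Rubio de Francia square-function inequality for congruent strips (Lemma~\ref{t:21Little}, from \cite{HTY09}). Your proposal reduces ``to the bound for the maximal function'' alone; without the strip square-function estimate the argument does not close, and the summation in $k$ cannot be carried out.
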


Before giving the proof, let us start with some definitions and lemmas.
We first define the following Fourier multiplier on $L_p(\N)$ for convenience.

\begin{definition}
We say $m:\R^2\rta\C$ is an $L_p(\N)$ Fourier multiplier if the operator $T_m$ defined by
\Bes
\wh{T_m(f)}(\xi)=m(\xi)\wh{f}(\xi),\quad f\in\mathbf{S}(\R^2)\otimes \S,
\Ees
extends to a bounded operator on $L_p(\N)$, where $\mathbf{S}(\R^2)$ is the class of Schwartz functions on $\R^2$ and $\S$ is the linear span of all $x\in\M_{+}$ whose support projections have finite trace defined in Subsection \ref{s:212lp}. Denote by $M_p(\N)$ the space of all $L_p(\N)$ multipliers and $\|\cdot\|_{M_p(\N)}$ the $L_p$ multiplier norm.
\end{definition}
Notice that for $\Re \lam>\fr{1}{2}$, the convolution kernel associated to $(1-|\cdot|^2)^\lam_+$ is integrable over $\R^2$ with the bound $e^{6|\Im\lam|^2}$ (see e.g. \cite[Proposition 5.2.2]{Gra14M}). Then we immediately get the following lemma.
\begin{lemma}\label{l:21bcritical}
Let $\lam\in\C$. If $\Re \lam>\fr{1}{2}$, then for all $1\leq p\leq\infty$, we have
\Bes
\|(1-|\cdot|^2)_+^\lam\|_{M_p(\N)}\lc e^{6|\Im\lam|^2}.
\Ees
\end{lemma}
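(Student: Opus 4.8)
The statement to be proved is that when $\Re\lam>\tfrac12$, the function $(1-|\cdot|^2)_+^\lam$ is an $L_p(\N)$ Fourier multiplier for every $1\le p\le\infty$, with norm bounded by $Ce^{6|\Im\lam|^2}$. The plan is to reduce everything to the classical scalar fact, already flagged in the excerpt via \cite[Proposition 5.2.2]{Gra14M}, that the inverse Fourier transform $K_\lam := \mathcal{F}^{-1}\big((1-|\cdot|^2)_+^\lam\big)$ is an integrable function on $\R^2$ with $\|K_\lam\|_{L_1(\R^2)}\lc e^{6|\Im\lam|^2}$. Once this is in hand, the multiplier operator $T_{(1-|\cdot|^2)_+^\lam}$ is nothing but convolution with the fixed scalar kernel $K_\lam$, i.e. for $f\in\mathbf S(\R^2)\otimes\S$ we have $T_{(1-|\cdot|^2)_+^\lam}(f)(x)=\int_{\R^2}K_\lam(y)\,f(x-y)\,dy$, where the integral is a Bochner (or weak-$*$) integral of the $L_p(\M)$-valued function $y\mapsto f(x-y)$ against the scalar $L_1$ weight $K_\lam(y)\,dy$.

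The key steps, in order: (1) Recall/quote the scalar estimate $\|K_\lam\|_{L_1(\R^2)}\lc e^{6|\Im\lam|^2}$ from \cite[Proposition 5.2.2]{Gra14M}; this is the only analytic input and it is done there. (2) Observe that on the dense subspace $\mathbf S(\R^2)\otimes\S$ the multiplier operator coincides with convolution by $K_\lam$, acting entrywise in the $\M$-variable — this uses only the definition of $T_m$ and the Fourier inversion formula, since $K_\lam\in L_1$ and $(1-|\cdot|^2)_+^\lam\in L^\infty$. (3) Apply the operator-valued Minkowski / Young-type inequality: for a scalar $g\in L_1(\R^2)$ and $f\in L_p(\N)$ with $\N=L_\infty(\R^2)\tens\M$,
\[
\Big\|\int_{\R^2}g(y)\,f(\cdot-y)\,dy\Big\|_{L_p(\N)}\le\int_{\R^2}|g(y)|\,\|f(\cdot-y)\|_{L_p(\N)}\,dy=\|g\|_{L_1(\R^2)}\,\|f\|_{L_p(\N)},
\]
using translation invariance of $\varphi=\int_{\R^2}dx\otimes\tau$ and the triangle inequality in $L_p(\N)$ (valid since $p\ge1$); for $p=\infty$ one argues directly with the operator norm. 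Taking $g=K_\lam$ gives $\|T_{(1-|\cdot|^2)_+^\lam}(f)\|_{L_p(\N)}\lc e^{6|\Im\lam|^2}\|f\|_{L_p(\N)}$ on the dense subspace, hence $T_{(1-|\cdot|^2)_+^\lam}$ extends boundedly to all of $L_p(\N)$ with the stated norm, which is exactly the assertion $\|(1-|\cdot|^2)_+^\lam\|_{M_p(\N)}\lc e^{6|\Im\lam|^2}$.

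There is essentially no hard obstacle here: the content is the scalar $L_1$ bound on the Bessel-type kernel, which is imported, and the only thing to be careful about is the operator-valued Minkowski inequality — one must check that $y\mapsto f(\cdot-y)$ is (weak-$*$) integrable as an $L_p(\N)$-valued function, which is immediate for $f$ in the dense Schwartz-tensor subspace, and that the triangle inequality in the noncommutative $L_p$ norm applies, which it does for $p\ge1$. For $p<1$ the statement is not claimed, consistent with this. So I would present the proof as: quote the kernel estimate, identify the multiplier with convolution by $K_\lam$, and invoke operator-valued Young's inequality — a short argument of a few lines.
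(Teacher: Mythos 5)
Your proposal is correct and follows precisely the paper's own (very brief) argument: the paper observes, citing \cite[Proposition 5.2.2]{Gra14M}, that the convolution kernel of $(1-|\cdot|^2)_+^\lam$ is integrable with bound $e^{6|\Im\lam|^2}$ when $\Re\lam>\tfrac12$, and then states that the lemma follows immediately. You have simply spelled out the ``immediate'' step — identifying the multiplier with convolution by the scalar $L_1$ kernel and applying the operator-valued Minkowski/Young inequality — which is exactly what the paper has in mind.
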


Next we introduce a noncommutative Littlewood-Paley-Rubio de Francia's square function inequality for equal intervals whose proof can be found in \cite{HTY09}. This inequality is also a key step in our study of Bochner-Riesz means.
\begin{lemma}\label{t:21Little}
Set $\N=L_\infty(\R)\tens\M$. Let $I_j$s be intervals of equal length with disjoint interior, $j\in\Z$ and $\bigcup_{j\in\Z}I_j=\R$. Define $\wh{P_jf}(\xi)=\chi_{I_j}(\xi)\wh{f}(\xi)$. Then for all $2\leq p<\infty$, we have
\Bes
\|\{P_j(f)\}\|_{L_p(\N;\ell_2^{rc})}\lc\|f\|_{L_p(\N)}.
\Ees
\end{lemma}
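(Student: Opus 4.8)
The plan is to reduce the square-function estimate to a uniform Fourier-multiplier bound and then prove the latter along the lines of Rubio de Francia's inequality for intervals of equal length. First, both sides of the asserted inequality are covariant under dilations of $\R$, so after a dilation we may assume every $I_j$ has length $1$; composing with a frequency translation (a modulation on the physical side, an $L_p(\N)$-isometry) reduces further to $I_j=[j,j+1)$, $j\in\Z$. It suffices to prove the bound for $f$ in the dense subspace $\mathbf S(\R)\otimes\S$, for which only finitely many $P_jf$ are nonzero, so all the sums below are finite. By the noncommutative Khintchine inequality (Lemma \ref{l:21Kin}), for $2\le p<\infty$,
\Bes
\|\{P_jf\}\|_{L_p(\N;\ell_2^{rc})}\approx\Big(\int_\Om\Big\|\sum_j\eps_j(\om)\,P_jf\Big\|_{L_p(\N)}^p\,dP(\om)\Big)^{\fr1p},
\Ees
so the lemma follows once we show that the Fourier multiplier
\Bes
m_\eps(\xi)=\sum_j\eps_j\chi_{[j,j+1)}(\xi),\qquad \eps=(\eps_j)\in\{\pm1\}^\Z,
\Ees
is bounded on $L_p(\N)$ with a norm independent of $\eps$ and of the number of intervals meeting the Fourier support of $f$. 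By the self-duality of $L_p(\N)$ it is then automatic for $1<p\le2$ as well, although only $p\ge2$ is needed here.

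For this multiplier bound I would transplant the classical argument to the operator-valued category. Choose a real $\theta\in\mathbf S(\R)$ with $\theta\equiv1$ on $[0,1]$ and $\supp\theta\subset(-1,2)$, set $\theta_j=\theta(\cdot-j)$ and $Q_j=T_{\theta_j}$; the $\theta_j$ have overlap at most $3$, satisfy $\theta_j\equiv1$ on $I_j$, and $\chi_{[j,j+1)}=\tfrac12\big(\mathrm{sgn}(\cdot-j)-\mathrm{sgn}(\cdot-j-1)\big)$ realizes the sharp edges of each cutoff through half-line Fourier projections, which are $L_p(\N)$-bounded via modulated Hilbert transforms. The two analytic inputs are: (a) the operator-valued Littlewood--Paley inequality at unit scale,
\Bes
\Big\|\Big(\sum_j|Q_jf|^2\Big)^{\fr12}\Big\|_{L_p(\N)}\lc\|f\|_{L_p(\N)},\qquad 2\le p<\infty,
\Ees
the operator-valued form of the classical Littlewood--Paley inequality for a partition of frequency space into unit blocks (it follows from the operator-valued Calder\'on--Zygmund and Hardy-space machinery in \cite{Mei07}, and it is sharp: it fails for $p<2$, which is exactly the range of the present lemma); and (b) the $L_p(\N)$-boundedness of the Hilbert transform and of its modulations, again from operator-valued Calder\'on--Zygmund theory. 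The row square function that also enters the $\ell_2^{rc}$-norm is handled by passing to adjoints: $(Q_jf)^*$ is governed by the frequency-reflected bumps $\theta(-\cdot-j)$, which again form a bounded-overlap unit-scale partition, so (a) applied to $f^*$ yields the row estimate too. Assembling these ingredients as in the classical proof — estimating $\langle T_{m_\eps}f,g\rangle$ for a test element $g$ by a Cauchy--Schwarz/H\"older split across the smooth pieces $Q_j$ (cf.\ Lemma \ref{l:21vecth}) and the modulated Hilbert transforms, while keeping every constant independent of $\eps$ and of the number of intervals — yields the multiplier bound, hence the lemma.

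The main obstacle is precisely the uniformity claimed in that last sentence: the passage from the smooth cutoffs $Q_j$ to the sharp ones $P_j$ with a bound that does not grow with the number of intervals. The function $m_\eps$ is not a Mikhlin multiplier, and not even a Marcinkiewicz one — its total variation on a dyadic block $[2^k,2^{k+1}]$ is of order $2^k$ — so no standard multiplier theorem applies and the edge contributions are individually unbounded; they can be controlled only through the cancellation of the jumps of $m_\eps$ at the integers against a Littlewood--Paley square function, which is the core mechanism of Rubio de Francia's equal-interval inequality. Implementing this mechanism in the operator-valued setting, where one must control the column and row square functions simultaneously (their maximum being the $\ell_2^{rc}$-norm for $p\ge2$, the row estimate corresponding to the frequency-reflected problem), is the real work; it is carried out in \cite{HTY09} by means of noncommutative $H_1$--BMO duality, and that is the route I would follow.
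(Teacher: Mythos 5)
The paper does not actually prove this lemma: it is imported verbatim from \cite{HTY09}, so all that is required here is the citation, and your closing remark (follow the noncommutative $H_1$--BMO duality argument of \cite{HTY09}) coincides with what the paper does. The problem is the self-contained route you sketch before that: its key intermediate claim is false, already in the scalar case $\M=\C$. The noncommutative Khintchine inequality (Lemma \ref{l:21Kin}) does give the equivalence of $\|\{P_jf\}\|_{L_p(\N;\ell_2^{rc})}$ with the \emph{average} $\big(\int_\Om\|\sum_j\eps_j(\om)P_jf\|_{L_p(\N)}^p\,dP(\om)\big)^{1/p}$, but it does not reduce the lemma to a bound on $T_{m_\eps}$ that is uniform in $\eps$, because no such uniform bound exists for $p\neq2$. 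Indeed, each $m_\eps$ is real-valued, so $T_{m_\eps}$ is self-adjoint and its $L_p$ and $L_{p'}$ norms agree (this is exactly your ``self-duality'' remark); a uniform bound for some $p>2$ would therefore give a uniform bound for $p'<2$, and averaging over $\eps$ with Khintchine would yield $\big\|(\sum_j|P_jf|^2)^{1/2}\big\|_{p'}\lc\|f\|_{p'}$ for $p'<2$. This fails even for scalar-valued $f$ on $\R$ with unit intervals: take $\wh f=\chi_{[0,N]}$, so that $\|f\|_{p'}\approx N^{1-1/p'}$ while $(\sum_j|P_jf(x)|^2)^{1/2}\approx\sqrt N\min(1,1/|x|)$ has $L_{p'}$-norm $\approx\sqrt N\gg N^{1-1/p'}$. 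Hence $\sup_\eps\|T_{m_\eps}\|_{L_p\to L_p}$ necessarily grows with the number of intervals meeting the Fourier support of $f$, and the plan ``prove a uniform randomized multiplier bound, then average'' cannot be completed; the fact that your reduction would also prove the (false) $p<2$ case is precisely the sign that it proves too much.

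The genuine content of the equal-interval Rubio de Francia inequality is the control of the randomized average itself (equivalently, of the column and row square functions directly), and this cannot be decoupled from the choice of signs by any pointwise-in-$\eps$ multiplier theorem; your smooth ingredients (the unit-scale operator-valued Littlewood--Paley bound and the $L_p(\N)$-boundedness of modulated Hilbert transforms from \cite{Mei07}) are fine but by themselves only handle each sharp edge separately, not the summation over $j$ with constants independent of the number of intervals. Your last paragraph correctly locates where that cancellation is actually exploited, namely the noncommutative $H_1$--BMO duality argument of \cite{HTY09}; so the fix is simply to delete the uniform-multiplier reduction and invoke \cite{HTY09} directly, which is all the paper itself does.
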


Now we begin to prove Theorem \ref{t:21BR}.
\begin{proof}[Proof of Theorem \ref{t:21BR}]
We first point out the norm convergence in $L_p(\N)$ is a direct consequence of \eqref{e:21BR}. In fact for any $f\in \mathbf{S}(\R^2)\otimes\S$, i.e. $f(x)=\sum_{i=1}^n\psi_i(x)a_i$ with $\psi_i\in \mathbf{S}(\R^2)$ and $a_i\in\S$, $B_R^\lam(f)$ converges to $f$ in $L_p(\N)$ as $R\rta\infty$.
Then by the density argument and the second inequality in \eqref{e:21BR}, we could get for every $f\in L_p(\N)$, $B_R^\lam(f)$ converges to $f$ in $L_p(\N)$.
So in our proof below, it is sufficient to consider \eqref{e:21BR}.
Since the multiplier in $M_p(\N)$ is invariant under the dilation: $$\|m(R^{-1}\cdot)\|_{M_p(\N)}=\|m\|_{M_p(\N)},$$
we only need to consider the first inequality in \eqref{e:21BR}.
Throughout the proof, we suppose that $\lam$ is a complex number with $\Re\lam>0$. When $p=2$, the estimate just follows from vector-valued Plancherel's theorem \eqref{e:21planch}. By the duality, it is enough to show \eqref{e:21BR} holds for $2<p<\fr{4}{1-2\Re\lam}$.

We first make a dyadic decomposition of the multiplier. To do that, we choose a smooth function $\phi$ supported in $[-\fr12,\fr12]$ and a smooth function $\psi$ supported in $[\fr18,\fr58]$ such that
\Bes
\phi(t)+\sum_{k=0}^\infty\psi(2^k(1-t))=1,\quad\forall t\in[0,1).
\Ees
Using this equality, we can decompose the multiplier $(1-|\xi|^2)_+^\lam$ as follows
\Be\label{e:21BRmud}
\begin{split}
(1-|\xi|^2)_+^\lam&=\Big[\phi(|\xi|)+\sum_{k=0}^\infty\psi(2^k(1-|\xi|))\Big](1-|\xi|^2)^\lam_+\\
&=\phi(|\xi|)(1-|\xi|^2)^\lam+\sum_{k=0}^{\infty}2^{-k\lam}(2^k(1-|\xi|))^\lam\psi(2^k(1-|\xi|))(1+|\xi|)^\lam\\
&=:m_{00}(\xi)+\sum_{k=0}^\infty2^{-k\lam}m_k(\xi).
\end{split}
\Ee

Below we give some observations of Fourier multipliers $m_{0,0}$ and $m_k$.
It is easy to see that $m_{00}$ is a smooth function with compact support, hence the multiplier $m_{00}$ lies in $M_p(\N)$ for all $1\leq p\leq\infty$. Notice that each function $m_k$ is also smooth, radial and supported in a small annulus:
$$\fr{1}{8}2^{-k}\leq1-|\xi|\leq\fr582^{-k}.$$
Therefore each $m_k$ also lies in $M_p(\N)$ for $1\leq p\leq\infty$, but its bound may depend on $k$.
To sum over the series in \eqref{e:21BRmud}, it is crucial to determine  exactly the bound of multiplier $m_k$ in $k$.
Our main goal in this proof is to show that for each $k$,
\Be\label{e:21BRkm}
\|m_k\|_{M_4(\N)}\lc(1+|k|)^{1/2}(1+|\lam|)^3.
\Ee

Suppose we have \eqref{e:21BRkm} for the moment.
Then summing over the series in \eqref{e:21BRmud} with these estimates in \eqref{e:21BRkm}, we get
\Bes
\begin{split}
\|B^\lam(f)\|_{L_4(\N)}&\lc\|\F^{-1}[{m_{00}}]*f\|_{L_4(\N)}+\sum_{k=0}^\infty2^{-k\Re\lam}\|\F^{-1}[m_k]*f\|_{L_4(\N)}\\
&\lc\|f\|_{L_4(\N)}+(1+|\lam|)^3\|f\|_{L_4({\N})}\lc(1+|\lam|)^3\|f\|_{L_4(\N)}.
\end{split}
\Ees
Applying the noncommutative Riesz-Thorin interpolation theorem (see \cite{Xu07}), we get \eqref{e:21BR} for $2<p<4$. Next we use the analytic interpolation theorem (see Theorem \ref{t:21inter} in the appendix) to show the remaining part. Let $\eps>0$ be a small constant (for example less than $\fr12$). Then
\begin{align*}
\|B^\lam(f)\|_{L_4(\N)}&\lc(1+|\Im\lam|)^3\|f\|_{L_4(\N)},& \Re\lam&=\eps;\\
\|B^\lam(f)\|_{L_\infty(\N)}&\lc e^{6|\Im\lam|^2}\|f\|_{L_\infty(\N)}, &\Re\lam&=\fr12+\eps,
\end{align*}
where the second inequality just follows from Lemma \ref{l:21bcritical}.   Define a new operator $T_z$ by
$$T_z(f)=B^{\fr12z+\eps}(f).$$
To apply Theorem \ref{t:21inter}, we should verify the hypothesis of Theorem \ref{t:21inter}.
It is easy to see that $z\rta T_z$ is an analytic family of linear operators with admissible growth and
\Bes
\begin{split}
\|T_{iy}(f)\|_{L_4(\N)}&\leq C_1(1+|y|)^3\|f\|_{L_4(\N)},\\
\|T_{1+iy}(f)\|_{L_\infty(\N)}&\leq C_2e^{\fr32|y|^2}\|f\|_{L_\infty(\N)}.
\end{split}
\Ees
Let $p$ and $\tet$ satisfy $\fr{1}{p}=\fr{1-\tet}{4}+\fr{\tet}{\infty}$ and $\fr{1}{2}\tet+\eps=\Re\lam$. Set
$M_0(y)=C_1(1+|y|)^3$ and  $M_1(y)=C_2e^{\fr32|y|^2}$. Then applying Theorem \ref{t:21inter}, we get
\Bes
\|T_{\tet}(f)\|_{L_p(\N)}\leq M(\tet)\|f\|_{L_p(\N)},
\Ees
where $M(\tet)$ is a finite constant defined in Theorem \ref{t:21inter}. This immediately implies our required estimate \eqref{e:21BR} since $B^{\Re\lam}(f)=T_\tet(f)$ and $p=\fr{4}{1-2\Re\lam+2\eps}<\fr{4}{1-2\Re\lam}$.

Now we turn to prove \eqref{e:21BRkm}. Fix $k$ in the rest of proof.
To estimate the $L_4$ norm of the Fourier multiplier $m_k$, we need an additional decomposition (the called microlocal decomposition) of  $m_k$ that takes into the radial nature of $m_k$.
We usually identify the plane $\R^2$ as complex plane $\C$ by $(x,y)\leftrightarrow z=x+iy$.
The reason to do this is that the expression $z=re^{2\pi i\tet}$ in $\C$ can be easily understood in the geometric point (note that $r$ is length of $|z|$ and $\tet$ is an argument).
Next we define  sectorial arcs as follows:
\Bes
\Ga_{k,l}=\{re^{2\pi i\tet}:\ (r,\tet)\in\R_+\times[0,1), |\tet-l2^{-\fr k2}|<2^{-\fr k2}, 1-\fr58 2^{-k}\leq r\leq 1-\fr 18 2^{-k}\},
\Ees
for all $l\in\{0,1,2,\cdots,\lfloor2^{k/2}\rfloor\}$. We choose a smooth function $\om$ such that $\om(u)=1$ for $|u|<\fr14$, $\om(u)=0$ for $|u|>1$ and $\sum_{l\in\Z}\om(x-l)=1$ holds for all $x\in\R$.
 Define
$$m_{k,l}(re^{2\pi i\tet})=m_k(re^{2\pi i\tet})\om(2^{k/2}\tet-l).$$
It should be pointed out that $m_k(re^{2\pi i\tet})$ is a function about variable $r$.
Then by our construction of $\om$, we see that
\Bes
m_k(\xi)=\sum_{l\in\Z}m_{k,l}(\xi)=\sum_{l=0}^{r(k)}m_{k,l}(\xi),
\Ees
where $r(k)=\lfloor2^{k/2}\rfloor-1$ if $k$ is even and $r(k)=\lfloor 2^{k/2}\rfloor$ if $k$ is odd. By some elementary calculations, it is not difficult to get that
\Be\label{e:21parirtet}
|\pari_r^\alp\pari_\tet^\beta m_{k,l}(re^{2\pi i\tet})|\lc(1+|\lam|)^{\alp}2^{k\alp}2^{\fr k2 \beta},\quad \supp\ m_{k,l}(re^{2\pi i\tet})\subset \Ga_{k,l}.
\Ee

Next we  split all $\{m_{k,l}\}_l$ into five subsets whose supports satisfy the following conditions:

\begin{enumerate}[\quad\quad (a).]
\item  $\supp m_{k,l}\subsetneq Q_a=:\{(x,y)\in\R^2: x>0,|y|<|x|\};$
\item  $\supp m_{k,l}\subsetneq Q_b=:\{(x,y)\in\R^2: x<0,|y|<|x|\};$
\item  $\supp m_{k,l}\subsetneq Q_c=:\{(x,y)\in\R^2: y>0,|y|>|x|\};$
\item  $\supp m_{k,l}\subsetneq Q_d=:\{(x,y)\in\R^2: y<0,|y|>|x|\};$
\item  The support $\supp m_{k,l}$ intersects $Q_e=\{(x,y)\in\R^2:|x|=|y|\}.$
\end{enumerate}

We first observe that there are only at most eight $m_{k,l}$s in the case (e). In such a case, it is straightforward to get that
\Be\label{e:21ecase}
\|m_{k,l}\|_{M_p(\N)}\lc(1+|\lam|)^3
\Ee
as we will see below (which will be pointed out in our later proof). By symmetry, we only need to concentrate on the case (a). Denote the index set $\mathfrak{I}=\{l:\ \Ga_{k,l}\subsetneq Q_a\}$.
For each $l\in\mathfrak{I}$, define a Fourier multiplier operator $T_{k,l}$ by
\Bes
\widehat{T_{k,l}g}(\xi)={m_{k,l}}(\xi)\widehat{g}(\xi).
\Ees
Our purpose below is to show $\sum_{l\in\mathfrak{I}}m_{k,l}$ satisfies \eqref{e:21BRkm}, i.e.
\Be\label{e:21BRk}
\Big\|\sum_{l\in\mathfrak{I}}T_{k,l}(f)\Big\|_{L_4(\N)}\lc(1+|k|)^{\fr12}(1+|\lam|)^3\|f\|_{L_4(\N)}.
\Ee

By decomposing $f$ as linear combination of four positive functions, we may suppose that $f$ is positive.
In the following we should separate the sum of $l\in\mathfrak{I}$ into two parts,
\Be\label{e:21Tkl}
\begin{split}
\Big\|\sum_{l\in\mathfrak{I}}T_{k,l}(f)\Big\|_{L_4(\N)}^4
&=\tau\int_{\R^2}|\sum_{l\in\mathfrak{I}}\sum_{l'\in\mathfrak{I}}T_{k,l}(f)^*(x)T_{k,l'}(f)(x)|^2dx\\
&\lc\tau\int_{\R^2}|\sum_{l\in\mathfrak{I}}\sum_{l'\in\mathfrak{I},|l-l'|\leq 10^3}T_{k,l}(f)^*(x)T_{k,l'}(f)(x)|^2dx\\
&\quad+\tau\int_{\R^2}|\sum_{l\in\mathfrak{I}}\sum_{l'\in\mathfrak{I},|l-l'|>10^3}T_{k,l}(f)^*(x)T_{k,l'}(f)(x)|^2dx\\
&=: I+II.
\end{split}
\Ee

For the term $I$, the sum of $l'$ is taking over $l-10^3\leq l'\leq l+10^3$  which is finite. By  H\"older's inequality of the square function in Lemma \ref{l:21vecth}, we get
\Bes
\begin{split}
I&\lc\sum_{i=-10^3}^{10^3}\tau\int_{\R^2}|\sum_{l\in\mathfrak{I},l+i\in\mathfrak{I}}T_{k,l}(f)^*(x)T_{k,l+i}(f)(x)|^2dx\\
&\leq\sum_{i=-10^3}^{10^3}\Big\|\Big(\sum_{l\in\mathfrak{I}}|T_{k,l}(f)|^2\Big)^{\fr12}\Big\|^2_{L_4(\N)}
\Big\|\Big(\sum_{l+i\in\mathfrak{I}}|T_{k,l+i}(f)|^2\Big)^{\fr12}\Big\|^2_{L_4(\N)}\\
&\lc\Big\|\Big(\sum_{l\in\mathfrak{I}}|T_{k,l}(f)|^2\Big)^{\fr12}\Big\|^4_{L_4(\N)}.
\end{split}\Ees

For the term $II$, using vector-valued Plancherel's theorem \eqref{e:21planch}, we get
$$II=\tau\int_{\R^2}\big|\sum_{l\in\mathfrak{I}}\sum_{l'\in\mathfrak{I},|l-l'|>10^3}\widehat{T_{k,l}(f)^*}*\widehat{T_{k,l'}(f)}(\xi)\big|^2 d\xi.$$
Since $f$ is positive, it is easy to see $T_{k,l}(f)^*=\F^{-1}[{\widetilde{m_{k,l}}}]*f$ where we use the notation $\tilde{a}(\cdot)=a(-\cdot)$. Then we have
\Bes
\supp\widehat{T_{k,l}(f)^*}\subset\tilde{\Ga}_{k,l},\quad
\supp\widehat{T_{k,l'}(f)}\subset\Ga_{k,l'},
\Ees
where $\tilde{\Ga}_{k,l}=\{x: -x\in\Ga_{k,l}\}$.
Applying these support conditions, we rewrite the above integral of $II$ as
$$\tau\int_{\R^2}\big|\sum_{l\in\mathfrak{I}}\sum_{l'\in\mathfrak{I},|l-l'|>10^3}\widehat{T_{k,l}(f)^*}*\widehat{T_{k,l'}(f)}(\xi)\chi_{\tilde\Ga_{k,l}+{\Ga}_{k,l'}}(\xi)\big|^2 d\xi.$$
Next using the convexity operator inequality \eqref{e:21conv}, this term is bounded by
$$\tau\int_{\R^2}\Big(\sum_{l\in\mathfrak{I}}\sum_{l'\in\mathfrak{I},|l-l'|>10^3}|\widehat{T_{k,l}(f)^*}*\widehat{T_{k,l'}(f)}(\xi)|^2\Big)\Big(\sum_{l\in\mathfrak{I}}\sum_{l'\in\mathfrak{I},|l-l'|>10^3}\chi_{\tilde\Ga_{k,l}+{\Ga}_{k,l'}}(\xi)\Big)d\xi.$$

Before proceeding our proof further, we need the following geometric estimate.
\begin{lemma}\label{l:21Geo}
There exists a constant $C$ independent of $k$ and $\xi$ such that
$$\sum_{l\in\mathfrak{I}}\sum_{l'\in\mathfrak{I},|l-l'|>10^3}\chi_{\tilde\Ga_{k,l}+{\Ga}_{k,l'}}(\xi)\leq C.$$
\end{lemma}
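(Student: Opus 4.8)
The plan is to reduce the overlap bound to a purely geometric counting problem about thin sectors near the diagonal of $Q_a$, and then exploit the fact that the angular separation $|l-l'|>10^3$ forces the difference sets $\tilde\Ga_{k,l}+\Ga_{k,l'}$ to spread out in the angular variable in a controlled, one-overlapping way. First I would record the basic shape of the pieces involved. Each $\Ga_{k,l}$ is (after identifying $\R^2\cong\C$) a ``brick'' of radial thickness $\approx 2^{-k}$ sitting at radius $\approx 1$ and of angular width $\approx 2^{-k/2}$ around the direction $l2^{-k/2}$; since $l\in\mathfrak I$ these directions all lie strictly inside the cone $Q_a=\{|y|<|x|,\,x>0\}$, i.e. $|l2^{-k/2}|$ is bounded away from $\tfrac18$. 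Consequently $\Ga_{k,l}$ is comparable to a rectangle $R_{k,l}$ of dimensions $\approx 2^{-k}\times 2^{-k/2}$ whose long side points in the direction $e^{2\pi i\,l2^{-k/2}}$, and $\tilde\Ga_{k,l}=-\Ga_{k,l}$ is the antipodal brick near radius $1$ in the opposite direction.

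Next I would analyze the sumset $\tilde\Ga_{k,l}+\Ga_{k,l'}$. Writing a point of $\Ga_{k,l'}$ as $r'e^{2\pi i\theta'}$ and of $\tilde\Ga_{k,l}$ as $-r\,e^{2\pi i\theta}$ with $r,r'\in[1-\tfrac58 2^{-k},\,1-\tfrac18 2^{-k}]$, $\theta\approx l2^{-k/2}$, $\theta'\approx l'2^{-k/2}$, the difference $r'e^{2\pi i\theta'}-re^{2\pi i\theta}$ has modulus $\approx |\theta-\theta'|\approx |l-l'|2^{-k/2}$ (using $r,r'$ close to $1$ and $|\theta-\theta'|$ small but $\gtrsim 10^3 2^{-k/2}$), and its argument is $\approx \tfrac{\theta+\theta'}{2}+\tfrac\pi2\,\mathrm{sign}(\theta'-\theta)$ up to an error $O(2^{-k/2})$ coming from the radial and angular widths. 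In other words, $\tilde\Ga_{k,l}+\Ga_{k,l'}$ is contained in a rectangle $\widetilde R_{l,l'}$ of dimensions $\approx 2^{-k}\times 2^{-k/2}$, at distance $\approx |l-l'|2^{-k/2}$ from the origin, pointing in the direction essentially orthogonal to the common angular sector, with angular position determined (mod $O(2^{-k/2})$) by the pair $(l,l')$ — more precisely by $l+l'$ and the sign of $l-l'$. Thus for a fixed $\xi$ we must count pairs $(l,l')$ with $|l-l'|>10^3$ such that $\xi\in\widetilde R_{l,l'}$.

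The heart of the argument is then the counting. Fix $\xi\ne0$; write $\xi=\rho e^{2\pi i\phi}$. Membership $\xi\in\widetilde R_{l,l'}$ pins down $|l-l'|2^{-k/2}\approx\rho$, i.e. $|l-l'|$ lies in a bounded-ratio window, and pins down the angle: $\tfrac{\theta+\theta'}{2}=\tfrac{(l+l')2^{-k/2}}{2}\equiv \phi\mp\tfrac14 \pmod{O(2^{-k/2})}$, so $l+l'$ is determined up to an additive $O(1)$. Combined with $l-l'$ being determined up to $O(1)$ once we also use the radial constraint $\rho\approx|l-l'|2^{-k/2}$ to bound $|l-l'|$ from above and below, we conclude that both $l+l'$ and $l-l'$ range over sets of size $O(1)$, hence there are only $O(1)$ admissible pairs $(l,l')$; this gives the claimed bound $C$. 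The main obstacle — and the point deserving the most care — is making the ``argument is $\approx\tfrac{\theta+\theta'}{2}+\tfrac\pi2$ up to $O(2^{-k/2})$'' claim rigorous uniformly in $k$: one must check that the radial thickness $2^{-k}$ contributes only an $O(2^{-k})$ angular error at distance $\gtrsim 10^3 2^{-k/2}$ from the origin (here the hypothesis $|l-l'|>10^3$ is exactly what prevents the difference from being too close to the origin, where the argument would be ill-controlled), and that staying inside $Q_a$ keeps all the relevant directions in a fixed compact arc so that the implied constants do not degenerate. Once this angular stability estimate is in hand, the counting is elementary and the lemma follows.
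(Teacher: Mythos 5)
Your proposal is essentially the same route as the paper's proof: identify $\tilde\Ga_{k,l}+\Ga_{k,l'}$ with a thin rectangle centered at $w(l,l')=e^{2\pi il\delta^{1/2}}-e^{2\pi il'\delta^{1/2}}$, observe that the radial and angular locations of that rectangle are governed respectively by $|l-l'|$ and $l+l'$ (together with $\mathrm{sign}(l-l')$), and then count: membership of a fixed $\xi$ in such a rectangle forces $|l-l'|$ and $l+l'$ each into $O(1)$ additive windows, hence $O(1)$ admissible pairs. (The paper phrases the count dually, fixing $(l,l')$ and bounding intersecting $(m,m')$, but the bookkeeping is equivalent.)

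One substantive imprecision you should correct. You claim the enclosing rectangle has dimensions $\approx 2^{-k}\times 2^{-k/2}$, attributing the short side to the radial thickness $2^{-k}$ of each brick. In fact the short (perpendicular-to-$w$) half-width is $\approx |l-l'|\,2^{-k}$, not $2^{-k}$. The dominant contribution is a cross term: the angular widths $O(2^{-k/2})$ of $\Ga_{k,l}$ and $\Ga_{k,l'}$ couple with $\sin\bigl(\pi(l-l')2^{-k/2}\bigr)\approx(l-l')2^{-k/2}$ to produce a perpendicular perturbation $\approx 2^{-k/2}\cdot(l-l')2^{-k/2}=|l-l'|2^{-k}$, which swamps the radial-thickness contribution $2^{-k}$ once $|l-l'|>10^3$. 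This is precisely the $4\pi^2\delta|l-l'|$ term in the paper's explicit half-width bound. Your final conclusion still stands because the center lies at distance $\approx|l-l'|2^{-k/2}$, so the angular spread is $|l-l'|2^{-k}\big/\bigl(|l-l'|2^{-k/2}\bigr)=2^{-k/2}$ --- still $O(2^{-k/2})$, hence $l+l'$ is still pinned down to $O(1)$ --- but note this is \emph{exactly} critical, not the comfortably subcritical $2^{-k/2}/|l-l'|$ your $2^{-k}$-width would give. To make the ``angular stability estimate'' you flag as the main obstacle genuinely rigorous, you have to carry the $|l-l'|2^{-k}$ half-width through the computation, as the paper's explicit rectangle $R(l,l')$ and the subsequent upper/lower bounds on $|w(l,l')-w(m,m')|$ do.
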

\begin{remark}
The geometric estimate of this lemma is different from the classical commutative one stated in \cite{Cor79}. The main difference is that there is no involution $*$ of $T_{k,l}f$ in the commutative case because of $|a|^4=|aa|^2$ if $a$ is a complex number. This yields a geometric estimate in the form $\sum_{l\in\mathfrak{I}}\sum_{l'\in\mathfrak{I}}\chi_{\Ga_{k,l}+\Ga_{k,l'}}(\xi)$ which is simpler than that of Lemma \ref{l:21Geo}.
In fact, all $\Ga_{k,l}$s are contained in $\{(x,y):x>0,|x|<|y|\}$ and the pieces $\Ga_{k,l}+\Ga_{k,l'}$s are well distributed.
However in the noncommutative setting, for an element $a\in\M$, we have $|a|^4=|a^*a|^2$.
Our argument above leads to the geometric estimate $\sum_{l\in\mathfrak{I}}\sum_{l'\in\mathfrak{I}}\chi_{\tilde{\Ga}_{k,l}+\Ga_{k,l'}}(\xi)$ .
Notice $\tilde{\Ga}_{k,l}$ stays in an oppositive direction of $\Ga_{k,l}$, i.e. $\tilde{\Ga}_{k,l}\subset\{(x,y): x<0, |x|<|y|\}$.
These pieces $\tilde{\Ga}_{k,l}+\Ga_{k,l'}$s may accumulate near the origin if $\Ga_{k,l}$ and $\Ga_{k,l'}$ are close enough, which can be easily seen in view of geometric observation.
Hence if $\xi$ is close to the origin, $\sum_{l\in\mathfrak{I}}\sum_{l'\in\mathfrak{I}}\chi_{\tilde{\Ga}_{k,l}+\Ga_{k,l'}}(\xi)$ may be infinite.
This is the reason why we split the sum of $\mathfrak{I}$ into two parts in \eqref{e:21Tkl}.
Luckily we can show  $\{\tilde{\Ga}_{k,l}+\Ga_{k,l'}\}_{l,l'}$ is finite overlapped if $|l-l'|>10^3$.
\end{remark}
The proof of Lemma \ref{l:21Geo} will be given later. Applying this geometric estimate, we get $II$ is bounded by
\Bes
\begin{split}
\sum_{l\in\mathfrak{I}}\sum_{l'\in\mathfrak{I}}\tau\int_{\R^2}|\widehat{T_{k,l}(f)^*}&*\widehat{T_{k,l'}(f)}(\xi)|^2 d\xi=\sum_{l\in\mathfrak{I}}\sum_{l'\in\mathfrak{I}}\tau\int_{\R^2}|{T_{k,l}(f)^*(x)}{T_{k,l'}(f)}(x)|^2 dx\\
&=\sum_{l\in\mathfrak{I}}\sum_{l'\in\mathfrak{I}}\tau\int_{\R^2}{T_{k,l'}(f)^*(x)}{T_{k,l}(f)}(x){T_{k,l}(f)^*(x)}{T_{k,l'}(f)}(x) dx\\
&=\tau\int_{\R^2}\Big(\sum_{l\in\mathfrak{I}}|{T_{k,l}(f)^*(x)}|^2\Big)^2 dx
=\Big\|\Big(\sum_{l\in\mathfrak{I}}|{T_{k,l}(f)^*}|^2\Big)^{\fr12} \Big\|^4_{L_4(\N)},
\end{split}
\Ees
where in the first equality we use vector-value's Plancherel theorem \eqref{e:21planch} and the third equality follows from the tracial property of $\tau$: $\tau(ab)=\tau(ba)$.
Combining these estimates of $I$ and $II$, together with \eqref{e:21Tkl},
we get
\Be\label{e:21Tklsqu}
\Big\|\sum_{l\in\mathfrak{I}}T_{k,l}(f)\Big\|_{L_4(\N)}^4\lc\Big\|\Big(\sum_{l\in\mathfrak{I}}|{T_{k,l}(f)}|^2\Big)^{\fr12} \Big\|^4_{L_4(\N)}+
\Big\|\Big(\sum_{l\in\mathfrak{I}}|{T_{k,l}(f)^*}|^2\Big)^{\fr12} \Big\|^4_{L_4(\N)}.
\Ee
This kind of estimate is consistent with the estimate of square functions  in noncommutative analysis in the sense that
we should use both row and column square functions to control the $L_p$ norm for $p\geq 2$ (see Subsection \ref{s:212squa}).

Now we should study carefully the multiplier $m_{k,l}$. Consider $m_{k,0}$ firstly.
Note that $m_{k,0}$ is supported in a rectangle parallel to axes with side length $2^{-k-1}$ and $2^{-\fr k2 +1}$ (along the $\xi_1$-axis and $\xi_2$-axis, respectively).
Roughly speaking in such a rectangle $\pari_{\xi_1}\approx\pari_{r}, \pari_{\xi_2}\approx \pari_\tet$. In fact by some straightforward calculations, we get the smooth function $m_{k,0}$ satisfies
\Be\label{e:21mk0}
|\pari_{\xi_1}^\alp\pari_{\xi_2}^\beta m_{k,0}(\xi_1,\xi_2)|\lc_{\alp,\beta}(1+|\lam|)^{\alp}2^{k\alp}2^{\fr k2\beta}
\Ee
for all positive integers $\alp$ and $\beta$ which is an analogue of \eqref{e:21parirtet}. We can rewrite \eqref{e:21mk0} as follows
\Bes
|\pari_{\xi_1}^\alp\pari_{\xi_2}^\beta [m_{k,0}(2^{-k}\xi_1,2^{-\fr k2}\xi_2)]|\lc_{\alp,\beta}(1+|\lam|)^{\alp}.
\Ees
With these estimates and using integration by parts, we obtain that
\Be\label{e:21mk0l}
2^{\fr32k}|\F^{-1}[m_{k,0}](2^kx_1,2^{\fr k2}x_2)|\lc(1+|\lam|)^3(1+|x_1|+|x_2|)^{-3}.
\Ee
Let $v_l$ and $v_l^{\bot}$ be the unit vectors corresponding the directions $e^{2\pi i l2^{-\fr k2}}$ and $ie^{2\pi il 2^{-\fr k2}}$, respectively.
According our definition, $m_{k,l}(\xi)=m_{k,0}(A\xi)$ where $A=\Big({v_l\atop v_l^{\bot}}\Big)$.
By a rotation, we see that
\Be\label{e:21rotation}
\F^{-1}[m_{k,l}](x_1,x_2)=\F^{-1}[m_{k,0}](Ax)=\F^{-1}[m_{k,0}](x\cdot v_l,x\cdot v_l^{\bot}).
\Ee
Hence combining with \eqref{e:21mk0l}, we have
\Bes
|\F^{-1}[m_{k,l}](x_1,x_2)|\lc 2^{-\fr32 k}(1+|\lam|)^3(1+2^{-k}|x\cdot v_l|+2^{-\fr k2}|x\cdot v_l^\bot|)^{-3},
\Ees
which immediately implies that
\Be\label{e:21mkl}
\sup_{k>0}\sup_{l\in\mathfrak{I}}\|\F^{-1}[m_{k,l}]\|_{L_1(\R^2)}\lc(1+|\lam|)^3.
\Ee
Here we point out that \eqref{e:21ecase} follows  from \eqref{e:21mkl}. Obviously all the above estimates hold for $m_{k,l}^*$ since $m_{k,l}^*=\overline{m_{k,l}}$.

We turn to give some geometric observations of the support of $m_{k,l}$.
Let $J_{k,l}$ be the $\xi_2$ projection of the support of $m_{k,l}$. If the support of $m_{k,l}$ lies in the upper half plane of $Q_a$ (i.e. $\xi_2>0$), then we see
\Bes
J_{k,l}=\R\times[(1-\tfrac58 2^{-k})\sin(2\pi 2^{-\fr k2}(l-1)), (1-\tfrac18 2^{-k})\sin(2\pi 2^{-\fr k2}(l+1))].
\Ees
Similarly if the support of $m_{k,l}$ lies in the lower half plane of $Q_a$ (i.e. $\xi_2\leq 0$), then
\Bes
J_{k,l}=\R\times[(1-\tfrac18 2^{-k})\sin(2\pi 2^{-\fr k2}(l-1)), (1-\tfrac58 2^{-k})\sin(2\pi 2^{-\fr k2}(l+1))].
\Ees
Since $k$ is a fixed integer, the sets $J_{k,l}$s are almost disjoint for different $l\in\mathfrak{I}$ because $\Ga_{k,l}$ is only joint with $\Ga_{k,l-1}$ and $\Ga_{k,l+1}$.
Next our goal is to construct congruent strips containing $J_{k,l}$. If we set $\tilde{J}_{k,l}$ as a strip centered at $\xi_2=(1-\fr 382^{-k})\sin(2\pi 2^{-\fr k2}l)$ with width $20\cdot 2^{-\fr{k}{2}}$, i.e.
$$
\tilde{J}_{k,l}=\R\times[(1-\tfrac38 2^{-k})\sin(2\pi 2^{-\fr k2}l)-10\cdot 2^{-\fr k2},(1-\tfrac 38 2^{-k})\sin(2\pi 2^{-\fr k2}l)+10\cdot2^{-\fr k2}].
$$
Then $J_{k,l}\subset \tilde{J}_{k,l}$. For any $\si\in\Z$, $\upsilon\in\{0,1,\cdots,39\}$,  define the strips as follows
\Bes
S_{k,\si,\ups}=\R\times[40\si2^{-\fr k2}+\ups2^{-\fr k2},40(\si+1)2^{-\fr k2}+\ups 2^{-\fr k2}].
\Ees
Note that $S_{k,\si,\ups}$ has width $40\cdot 2^{-\fr k2}$ and each $\tilde{J}_{k,l}$ must be contained in one of $S_{k,\si,\ups}$ for some $\si\in\Z$ and $\ups\in\{0,1,\cdots,39\}$ in view of some simple geometric observation of $S_{k,\si,\ups}$.
We say $\tilde{J}_{k,l}\subset S_{k,\si_l,\ups_l}$ and define $S_{k,\si_l,\ups_l}=B_{k,l}$.
Let $f_{k,l}=\F^{-1}[\chi_{B_{k,l}}\widehat{f}]=(\F^{-1}[\chi_{B_{k,l}}])*f$. According the definition of $T_{k,l}$, we see
\Bes
T_{k,l}(f)=\F^{-1}[m_{k,l}]*f_{k,l}=T_{k,l}(f_{k,l}).
\Ees

Now we come back to the estimate in \eqref{e:21Tklsqu}. We first consider the column square function.
Using the convexity operator inequality \eqref{e:21conv} and  the uniform boundedness of $\F^{-1}[m_{k,l}]$ in \eqref{e:21mkl}, we get
\Bes
\begin{split}
|T_{k,l}(f)(x)|^2&=\Big|\int_{\R^2}\F^{-1}[m_{k,l}](x-y)f_{k,l}(y)dy\Big|^2\\
&\leq\int_{\R^2}|\F^{-1}[m_{k,l}](x-y)|dy\cdot\int_{\R^2}|\F^{-1}
[m_{k,l}](x-y)|\cdot|f_{k,l}(y)|^2dy\\
&\lc(1+|\lam|)^3|\F^{-1}[m_{k,l}]|*|f_{k,l}|^2(x).
\end{split}
\Ees

Plugging the above inequality into the column square function in \eqref{e:21Tklsqu} and applying the monotone property \eqref{e:21mono}, we get
\Bes
\begin{split}
\Big\|\Big(\sum_{l\in\mathfrak{I}}|{T_{k,l}(f)}|^2&\Big)^{\fr12} \Big\|^4_{L_4(\N)}
\lc(1+|\lam|)^6\tau\int_{\R^2}\Big(\sum_{l\in\mathfrak{I}}|\F^{-1}[m_{k,l}]|*|f_{k,l}|^2(x)\Big)^2dx\\
&\ \ \ \  =(1+|\lam|)^6\sup_{g}\Big[\tau\int_{\R^2}\sum_{l\in\mathfrak{I}}|\F^{-1}[m_{k,l}]|*|f_{k,l}|^2(x)g(x)dx\Big]^2
\end{split}
\Ees
where in the last equality we use the duality $[L_p(\N)]^*=L_{p'}(\N)$ and the supremum is taken over all positive $g$ in $L_2(\N)$ with norm less than one. Continue the above estimate, we obtain
\Bes
\begin{split}
&\quad(1+|\lam|)^6\sup_{g}\Big(\tau\sum_{l\in\mathfrak{I}}\int_{\R^2}|f_{k,l}(y)|^2\cdot|\F[m_{k,l}]|*g(y)dy\Big)^2\\
&\leq(1+|\lam|)^6\sup_{g}\|\sup_{l\in\mathfrak{I}}|\F[m_{k,l}]|*g\|_{L_2(\N)}^2\big\|\sum_{l\in\mathfrak{I}}|f_{k,l}|^2\big\|_{L_2(\N)}^2\\
&=(1+|\lam|)^6\sup_{g}\|\sup_{l\in\mathfrak{I}}|\F[m_{k,l}]|*g\|_{L_2(\N)}^2\big\|(\sum_{l\in\mathfrak{I}}|f_{k,l}|^2)^{\fr12}\big\|_{L_4(\N)}^4,
\end{split}
\Ees
where the first inequality follows from the duality in Lemma \ref{l:21dual}. If we can prove the following two estimates:
\Be\label{e:21l2max}
\|\sup_{l\in\mathfrak{I}}|\F[m_{k,l}]|*g\|_{L_2(\N)}\lc(1+|\lam|)^3(1+k)\|g\|_{L_2(\N)},
\Ee
\Be\label{e:21l4}
\big\|(\sum_{l\in\mathfrak{I}}|f_{k,l}|^2)^{\fr12}\big\|_{L_4(\N)}\lc\|f\|_{L_4(\N)},
\Ee
then we finally get that
\Bes
\Big\|\Big(\sum_{l\in\mathfrak{I}}|{T_{k,l}(f)}|^2\Big)^{\fr12} \Big\|_{L_4(\N)}\lc(1+|\lam|^3)(1+k)^{\fr12}\|f\|_{L_4(\N)},
\Ees
which is the required estimate of \eqref{e:21BRk}.
For the row square function, we can use the similar method to obtain the desired estimate
\Bes
\Big\|\Big(\sum_{l\in\mathfrak{I}}|{T_{k,l}(f)}^*|^2\Big)^{\fr12} \Big\|_{L_4(\N)}\lc(1+|\lam|^3)(1+k)^{\fr12}\|f\|_{L_4(\N)}.
\Ees
In fact, $T_{k,l}(f)^*=\F[m_{k,l}^*]*f_{k,l}^*$. Then repeating these arguments for the column square function above, we see the proof is reduced to the following two inequalities:
\Be\label{e:21l2maxself}
\|\sup_{l\in\mathfrak{I}}|\F[m_{k,l}^*]|*g\|_{L_2(\N)}\lc(1+|\lam|)^3(1+k)\|g\|_{L_2(\N)},
\Ee
\Be\label{e:21l4self}
\big\|(\sum_{l\in\mathfrak{I}}|f_{k,l}^*|^2)^{\fr12}\big\|_{L_4(\N)}\lc\|f\|_{L_4(\N)}.
\Ee

Below we give the proofs of \eqref{e:21l2max}, \eqref{e:21l2maxself} and \eqref{e:21l4}, \eqref{e:21l4self}.
We first consider \eqref{e:21l2max} and \eqref{e:21l2maxself}.
Recall \eqref{e:21mk0l}, $\F[m_{k,0}]$ is integrable over $\R^2$ and satisfies $$2^{\fr32 k}|\F[m_{k,0}](2^{k}x_1,2^{\fr k2}{x_2})|\lc\fr{(1+|\lam|)^3}{(1+|x|)^3}\lc(1+|\lam|)^3\sum_{s=0}^\infty \fr{2^{-s}}{2^{2s}}\chi_{[-2^s,2^s]\times[-2^s,2^s]}(x).$$

By making a change of variables, we immediately get
\Bes
|\F[m_{k,0}](x)|\lc(1+|\lam|)^3\sum_{s=0}^\infty{2^{-s}}\fr{1}{|R_s|}\chi_{R_s}(x),
\Ees
where $R_s=[-2^{s+k},2^{s+k}]\times [-2^{s+\fr k2},2^{s+\fr k2}]$. Notice that a general function $\F[m_{k,l}]$ is obtained from $\F[m_{k,0}]$ by a rotation $A=\Big({v_l\atop v_l^{\bot}}\Big)$ (see \eqref{e:21rotation}). Therefore we get
\Bes
|\F[m_{k,l}](x)|\lc(1+|\lam|)^3\sum_{s=0}^\infty{2^{-s}}\fr{1}{|R_{s,l}|}\chi_{R_{s,l}}(x),
\Ees
where $R_{s,l}$ is a rectangle with principal axes along the directions $v_l$ and $v_l^{\bot}$ with half side length $2^{s+k}$ and $2^{s+\fr k2}$, respectively. Since $g$ is positive, we see that
\Bes
|\F[m_{k,l}]|*g(x)\lc(1+|\lam|)^3\sum_{s=0}^\infty
2^{-s}\fr{1}{|R_{s,l}|}\int_{R_{s,l}}g(x-y)dy,
\Ees
where $\lc$ should be understood as partial order in the positive cone of $L_2(\N)$.
For convenience, we set $M_{R_{s,l}}g(x)={|R_{s,l}|^{-1}}\int_{R_{s,l}}g(x-y)dy$. Notice that $M_{R_{s,l}}g=M_{\mathcal{R}_{2^{k/2}}}g$ the Kakeya average operator defined in \eqref{d:21Kak}. Now using the estimate of the Kakeya maximal function in Theorem \ref{t:21Kak}, we get
\Bes
\begin{split}
\|\sup_{l\in\mathfrak{I}}|\F[m_{k,l}]|*g\|_{L_2(\N)}&
\lc(1+|\lam|)^3\sum_{s=0}^\infty2^{-s}\|\sup_{s\in\mathbb{N}, l\in\mathfrak{I}}M_{R_{s,l}}g\|_{L_2(\N)}\\
&\lc(1+|\lam|)^3\sum_{s=0}^\infty 2^{-s}\|\sup_{}M_{\mathcal{R}_{2^{k/2}}}g\|_{L_2(\N)}\\
&\lc(1+|\lam|)^3(1+k)\|g\|_{L_2(\N)},
\end{split}
\Ees
which is the just required estimate of \eqref{e:21l2max}. The proof of \eqref{e:21l2maxself} is similar, we omit the details here.

We turn to the proofs of  \eqref{e:21l4} and \eqref{e:21l4self}.
Recall the strips
\Bes
S_{k,\si,\ups}=\R\times[40\si2^{-\fr k2}+\ups2^{-\fr k2},40(\si+1)2^{-\fr k2}+\ups 2^{-\fr k2}]
\Ees
which is defined for $\si\in\Z$ and $\ups\in\{0,1,\cdots,39\}$. These strips have width $40\cdot 2^{-\fr k2}$ and each $\tilde{J}_{k,l}$ belongs to one of them, which we call $S_{k,\si_l,\ups_l}=B_{k,l}$.

The family $\{B_{k,l}\}_{l\in\mathfrak{I}}$ may have duplicate sets, so we split it into $40$ subfamilies by placing $B_{k,l}$ into different subfamilies if the indices $\ups_l$ and $\ups_{l'}$ are different. Thus we write the set $\mathfrak{I}$ as $\mathfrak{I}=\mathfrak{I}^1\cup \mathfrak{I}^2\cup\cdots\cup\mathfrak{I}^{40}$ where elements in each $\mathfrak{I}^i$ are different.

We next observe that the multiplier operator
\Bes
\widehat{f_{k,l}}=\chi_{B_{k,l}}
\widehat{f}\Ees
satisfies $f_{k,l}=(id_{x_1}\otimes P_l)f$ where $id_{x_1}$ is the identity operator in $x_1$ variable and $P_l$ is an operator with the multiplier $\chi_{\{\xi_2: (\xi_1,\xi_2)\in B_{k,l}\}}$. Now using Lemma \ref{t:21Little}, we get for $2<p<\infty$,
\Bes
\begin{split}
\big\|\{f_{k,l}\}_{l\in\mathfrak{I}^i} \big\|_{L_p(\N;\ell_2^{rc})}&=
\big\|\{(id_{x_1}\otimes P_l)(f)\}_{l\in\mathfrak{I}^i} \big\|_{L_p(\N;\ell_2^{rc})}\\
&=\Big(\int_{\R}\big\|\{P_l(f(x_1,\cdot))\}_{l\in\mathfrak{I}^i}\Big\|^p_{L_p(L_\infty(\R)\tens\M;\ell_2^{rc})} dx_1\Big)^{\fr 1p}\\
&\lc\Big(\int_{\R}\|f(x_1,\cdot)\|^p_{L_p(L_\infty(\R)\tens\M)} dx_1\Big)^{\fr 1p}=\|f\|_{L_p(\N)}.
\end{split}\Ees
Specially the case $p=4$ is our required estimate for \eqref{e:21l4} and \eqref{e:21l4self}. Now we prove \eqref{e:21l4} and \eqref{e:21l4self} for one $\mathfrak{I}^i$. Since $\mathfrak{I}=\mathfrak{I}^1\cup \mathfrak{I}^2\cdots \cup \mathfrak{I}^{40}$, the full forms of \eqref{e:21l4} and \eqref{e:21l4self} just follow from Minkowski's inequality.
\end{proof}

In the remain part of this section, we turn to Lemma \ref{l:21Geo}. This lemma is stated in \cite[Exercise 3.4]{Dem20} without a proof. For the sake of self-containment, we give a proof here.

\begin{proof}[{Proof of Lemma \ref{l:21Geo}}]
We only need to show that for each integer $k$, the following estimate
$$\sum_{l\in\mathfrak{I}}\sum_{|l-l'|>10^3}\chi_{\Ga_{k,l}-{\Ga}_{k,l'}}(\xi)\leq C$$
holds.
If $k$ is finite, there exist only finite many pairs of sets $\Ga_{k,l}-\Ga_{k,l'}$ depending on $k$ and this lemma is trivial.
Therefore we can assume that $k$ is a large integer. Set $\del=2^{-k}$. Then $\del$ is sufficient small. For simplicity, denote the set $\Ga_{k,l}$ by $\Ga_l$.
Elements of $\Ga_l-\Ga_{l'}$ have the form
\Bes
E:=re^{2\pi i(l+\alp)\del^{\fr12}}-r'e^{2\pi i(l'+\alp')\del^{\fr12}},
\Ees
where $\alp,\alp'\in[-1,1]$ and $r,r'\in[1-\fr58\del,1-\fr18\del]$. We first give some geometric observations of $\Ga_{l}-\Ga_{l'}$. Set
\Be\label{e:21wllmain}
w(l,l'):=e^{2\pi il\del^{\fr12}}-e^{2\pi il'\del^{\fr12}}=2\sin(\pi(l-l')\del^{\fr12})ie^{\pi i(l+l')\del^{\fr12}},
\Ee
then it is easy to see  that the direction of $w(l,l')$ is along $ie^{\pi i(l+l')\del^{\fr12}}$. Next we rewrite $E$ as follows
\Bes
w(l,l')+\Big[re^{2\pi i(l+\alp)\del^{\fr12}}-re^{2\pi i(l'+\alp')\del^{\fr12}}-rw(l,l')\Big]
+\Big[(r-1)w(l,l')+(r-r')e^{2\pi i(l'+\alp')\del^{\fr12}}
\Big].
\Ees
By some elementary calculations, we further get the second term above equals to
$$rw(l,l')(\fr{e^{2\pi i\alp\del^{\fr12}}+e^{2\pi i\alp'\del^{\fr12}}-2}{2})+r(e^{2\pi il'\del^{\fr12}}+e^{2\pi il\del^{\fr12}})(\fr{e^{2\pi i\alp\del^{\fr12}}-e^{2\pi i\alp'\del^{\fr12}}}{2}).$$
Combining this equality, we see that $E$ equals
\Bes
\begin{split}
w(l,l')&+ir(e^{2\pi il'\del^{\fr12}}+e^{2\pi il\del^{\fr12}})\fr{\sin2\pi\alp\del^{\fr12}-\sin2\pi\alp'\del^{\fr12}}{2}\\
&+r(e^{2\pi il'\del^{\fr12}}+e^{2\pi il\del^{\fr12}})\fr{\cos2\pi\alp\del^{\fr12}-\cos2\pi \alp'\del^{\fr12}}{2}+E(r,r')
\end{split}
\Ees
where
\Bes
\begin{split}
E(r,r')&=(r-1)w(l,l')+(r-r')e^{2\pi i(l'+\alp')\del^{\fr12}}+rw(l,l')\fr{\cos2\pi\alp\del^{\fr12}+\cos2\pi\alp'\del^{\fr12}-2}{2}\\
&\quad+irw(l,l')\fr{\sin2\pi\alp\del^{\fr12}+\sin2\pi\alp'\del^{\fr12}}{2}.
\end{split}\Ees

On the other hand, utilizing the following equality
$$e^{2\pi il'\del^{\fr12}}+e^{2\pi il\del^{\fr12}}=2\cos(\pi (l-l')\del^{\fr12})e^{\pi i(l+l')\del^{\fr12}},$$
we see that $i(e^{2\pi il'\del^{\fr12}}+e^{2\pi il\del^{\fr12}})$ has the same direction as $w(l,l')$. Thus, $(e^{2\pi il'\del^{\fr12}}+e^{2\pi il\del^{\fr12}})$ is perpendicular to $w(l,l')$.
Before proceeding further, we need the following inequalities which will be frequently used in later manipulations:
\Be\label{e:21sincos}
\begin{split}
\rm{(i):\ |\sin t|\leq |t|;\quad (ii):\ |1-\cos t|\leq {|t|}^2/2;\quad (iii):\ |\sin t|\geq\fr{2|t|}{\pi}, \forall |t|\leq\fr{\pi}{2}}.
\end{split}
\Ee
By the first and second inequalities in \eqref{e:21sincos}, it is straightforward to verify that the error term satisfies
\Bes
|E(r,r')|\leq\fr54\del+\fr{\del}{2}+4\pi^2\del+4\pi^2\del|l-l'|\leq50\del+4\pi^2\del|l-l'|.
\Ees
We therefore conclude that $\Ga_{l}-\Ga_{l'}$ contains in a rectangle $R(l,l')$ centered at $w(l,l')$ with half-length
$$4\pi\del^{\fr12}+50\del+4\pi^2\del|l-l'|<100\del^{\fr12}$$
in the direction along $w(l,l')$ since $\del$ is sufficient small and half-width
$$4\pi^2\del+50\del+4\pi^2\del|l-l'|<90|l-l'|\del<100\del^{\fr12}$$
in the direction along $i w(l,l')$ since $|l-l'|>10^3$. Moreover $R(l,l')\subset B(w(l,l'),150\del^{\fr12})$ a disk centered at $w(l,l')$ with radius $150\del^{\fr12}$.

Our next goal is to show that for a fixed $(l,l')\in(\mathfrak{I},\mathfrak{I})$ with $|l-l'|>10^3$, there exist only finite pairs $(m,m')\in(\mathfrak{I},\mathfrak{I})$ with $|m-m'|>10^3$ such that $(\Ga_m-\Ga_{m'})\cap(\Ga_l-\Ga_{l'})\neq\emptyset$. Once we prove this fact, the geometric estimate in Lemma \ref{l:21Geo} immediately follows from it.
If $|w(l,l')-w(m,m')|>300\del^{\fr12}$, then $(\Ga_m-\Ga_{m'})\cap(\Ga_l-\Ga_{l'})=\emptyset$. Therefore if these two sets intersect, we get
\Be\label{e:21wllup}
|w(l,l')-w(m,m')|\leq 300\del^{\fr12}.
\Ee

In the following we should give a lower bound for $|w(l,l')-w(m,m')|$. Before that we need a fundamental inequality which could be found in \cite[Exercise 5.2.2]{Gra14M}:
\Be\label{e:21cos}
|r_1e^{i\tet_1}-r_2e^{i\tet_2}|\geq\min(r_1,r_2)\sin|\tet_1-\tet_2|,\quad \forall\ |\tet_1|,|\tet_2|<\fr{\pi}{4}.
\Ee
By some elementary calculations and using the inequality \eqref{e:21cos} with (iii) of \eqref{e:21sincos}, we get a lower bound as follows
\Bes
\begin{split}
|w(l,l')-w(m,m')|&=|2\cos(\pi(l-m')\del^{\fr12})e^{\pi i(l+m')\del^{\fr12}}-2\cos(\pi(l'-m)\del^{\fr12})e^{\pi i(l'+m)\del^{\fr12}}|\\
&\geq\fr{4}{\pi}\cos(\fr\pi4)|\pi[((l+m')-\pi(l'+m))]|\del^{\fr12}
\end{split}\Ees
where we use our hypothesis that all the supports of $m_{k,l}$ are contained in $Q_a$ (i.e. $|2\pi l\del^{\fr12}|, |2\pi l'\del^{\fr12}|, |2\pi m\del^{\fr12}|, |2\pi m'\del^{\fr12}|<\fr{\pi}{4}$). Thus combining the above estimate and \eqref{e:21wllup}, we obtain
\Be\label{e:21llup}
|(l-l')-(m-m')|<200.
\Ee
However \eqref{e:21llup} is not enough to show that the number of $(m,m')$ is finite. In the following, we give more exact information of lower and upper bound of $|w(l,l')-w(m,m')|$.

According our condition, $|l-l'|>10^3$ and $|m-m'|>10^3$. Without loss of generality, we suppose that $l>l',m>m'$. Then by \eqref{e:21llup}, we get
\Bes
l-l'\approx m-m'.
\Ees
This implies $|w(l,l')|\approx|w(m,m')|\approx (l-l')\del^{\fr12}$ in view of the identity \eqref{e:21wllmain} and (i), (iii) of \eqref{e:21sincos}.

Next we see that the rectangle $R({l,l'})$ is centered at point $w(l,l')$ with a distance to the origin $|w(l,l')|=2\sin(\pi(l-l')\del^{\fr12})$.
Recall that $R({l,l'})$ has half-length $100\del^{\fr12}$ in the direction $w(l,l')$ and in the direction $iw(l,l')$ has half-width $90(l-l')\del$ (see Figure \ref{f:21inter} below). This implies that $R({l,l'})$ is far from the origin $(0,0)$. In fact, by (iii) of \eqref{e:21sincos},
\Bes
dist(R(l,l'),(0,0))\geq2\sin(\pi(l-l')\del^{\fr12})-100\del^{\fr12}\geq3{(l-l')\del^{\fr12}}.
\Ees

Similar properties also hold for $R({m,m'})$. Since $\Ga_{l}-\Ga_{l'}$ intersects $\Ga_{m}-\Ga_{m'}$, then $R({l,l'})$ intersects $R(m,m')$.
Note that the angle $\tet$ between $w(l,l')$ and $w(m,m')$ is $\pi|l+l'-m-m'|\del^{\fr12}$ (see Figure \ref{f:21inter} below).

\begin{figure}[!h]
\centering
\includegraphics[height=0.4\textwidth]{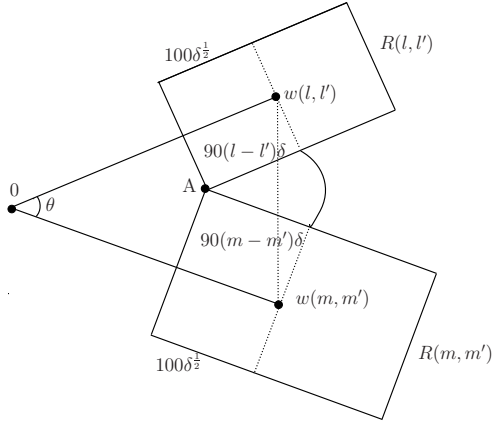}
\caption{\small $R({l,l'})$ has half-length $100\del^{\fr12}$ in the direction along $w(l,l')$ and in the direction along $iw(l,l')$ has half-width $90(l-l')\del$. $R(l,l')$ intersects $R(m,m')$ at the point $A$.}\label{f:21inter}
\end{figure}

Then the distance of $w(l,l')$ and $w(m,m')$ has an upper bound
$$|w(l,l')-w(m,m')|\leq90(l-l')\del+90(m-m')\del+100\del^{\fr12}\tet.$$
On the other hand, the distance of $w(l,l')$ and $w(m,m')$ has the following lower bound
\Bes
\begin{split}
|w(l,l')-w(m,m')|&\geq2\Big(\sin(\pi(l-l')\del^{\fr12})+\sin(\pi(m-m')\del^{\fr12})\Big)\sin\tfrac{\tet}{2}\\
&\geq 4((l-l')+(m-m'))|l+l'-m-m'|\del,
\end{split}\Ees
where we use (iii) of \eqref{e:21sincos}. The upper and lower estimates, together with our hypothesis $l-l'>10^3$, $m-m'>10^3$, finally yield that
\Be\label{e:21lllow}
|l+l'-m-m'|\leq200.
\Ee
Combining \eqref{e:21lllow} and \eqref{e:21llup}, we get that $|l-m|<200$, $|l'-m'|<200$. Hence we prove that for a fixed $(l,l')$, there exist only finite pairs $(m,m')$ such that $(\Ga_{l}-\Ga_{l'})\cap(\Ga_m-\Ga_{m'})\neq\emptyset$,
which completes our proof.
\end{proof}

\vskip0.24cm

\section{Bochner-Riesz means on quantum tori}\label{s:21appli}
In this section, our goal is to establish the full boundedness of Bochner-Riesz means on two-dimensional quantum tori, i.e. Theorem \ref{t:21BRqtorusi}.
We first establish the corresponding results on usual torus based on Theorem \ref{t:21BR} in Section \ref{s:21BR}.
\subsection{Bochner-Riesz means on usual tori}
Recall that the $d$-torus $\T^d$ is defined as $\R^d/\Z^d$. We often set $\T^d$ as the cube $[0,1]^d$ with opposite sides identified. The function on $\T^d$ can be regarded as an $1$-periodic function on $\R^d$ in every coordinate. Haar measure on $\T^d$ is the restriction of Lebesgue measure to $[0,1]^d$ which is still denoted by $dx$.

Our previous main theorem of Bochner-Riesz means in Section \ref{s:21BR} is in the frame of the tensor von Neumann algebra $L_\infty(\R^d)\tens\M$. To extend our main result to the fully noncommutative quantum torus, we should first transfer them  to the setting of the tensor von Neumann algebra $L_\infty(\T^d)\tens\M$.

Given a function $f:\T^d\rta\M$, define the Bochner-Riesz means $\B^\lam_R(f)(x)$  by
\eqref{e:21BRtorus}.
By the transference method (see Theorem \ref{t:21transft} later), we can formulate the noncommutative Bochner-Riesz conjecture on tori with ranges of indexes $r$ and $p$ the same as that in the commutative case in the introduction.
Our main result in this subsection is stated as follows.
\begin{theorem}\label{t:21BRtorus}
Suppose $0<\lam\leq\fr12$ and $\fr{4}{3+2\lam}<p<\fr{4}{1-2\lam}$. Let $\B_R^\lam$ be defined in \eqref{e:21BRtorus} for $d=2$. Then we have
\Bes
\sup_{R>0}\|\B_R^\lam(f)\|_{L_p(L_\infty(\T^2)\tens\M)}\lc\|f\|_{L_p(L_\infty(\T^2)\tens\M)}.
\Ees
Consequently for $f\in L_p(L_\infty(\T^2)\tens\M)$, $\B_R^\lam(f)$ converges to $f$ in $L_p(L_\infty(\T^2)\tens\M)$ as $R\rta\infty$.
\end{theorem}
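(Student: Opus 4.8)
The plan is to deduce Theorem~\ref{t:21BRtorus} from the Euclidean estimate of Theorem~\ref{t:21BR} by means of the operator-valued transference theorem (Theorem~\ref{t:21transft}), which is the noncommutative analogue of de~Leeuw's restriction theorem for Fourier multipliers. The starting observation is that, for each fixed $R>0$, the torus operator $\B_R^\lam$ defined in \eqref{e:21BRtorus} is precisely the $L_\infty(\T^2)\tens\M$ Fourier multiplier whose symbol is the restriction to the lattice $\Z^2$ of the function $m_R(\xi)=(1-|\xi/R|^2)^\lam_+$, $\xi\in\R^2$. Since $\lam>0$, each $m_R$ is continuous on all of $\R^2$, so the hypotheses needed to invoke Theorem~\ref{t:21transft} are satisfied.

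First I would record that, under the assumptions $0<\lam\le\fr12$ and $\fr{4}{3+2\lam}<p<\fr{4}{1-2\lam}$, the second displayed inequality of Theorem~\ref{t:21BR} gives $\sup_{R>0}\|m_R\|_{M_p(L_\infty(\R^2)\tens\M)}=\sup_{R>0}\|B_R^\lam\|_{L_p(L_\infty(\R^2)\tens\M)\rta L_p(L_\infty(\R^2)\tens\M)}\lc1$; the uniformity in $R$ equally follows from the case $R=1$ together with the dilation invariance of the multiplier norm. Applying Theorem~\ref{t:21transft} to each $m_R$ then yields
\Bes
\|\B_R^\lam\|_{L_p(L_\infty(\T^2)\tens\M)\rta L_p(L_\infty(\T^2)\tens\M)}\le\|m_R\|_{M_p(L_\infty(\R^2)\tens\M)}\lc1,
\Ees
with implied constant independent of $R$. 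Taking the supremum over $R>0$ proves the first assertion of the theorem.

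For the convergence statement I would argue by density. Finite sums $f=\sum_i\psi_i(x)a_i$, with $\psi_i$ trigonometric polynomials on $\T^2$ and $a_i\in\S$, form a dense subspace of $L_p(L_\infty(\T^2)\tens\M)$; regrouping, such an $f$ has the form $\sum_{|m|\le N_0}b_m e^{2\pi i\inn{m}{x}}$ with $b_m\in\S$ and finitely many nonzero terms, so $\B_R^\lam(f)=\sum_{|m|\le N_0}(1-|m/R|^2)^\lam_+\,b_m e^{2\pi i\inn{m}{x}}\rta\sum_{|m|\le N_0}b_m e^{2\pi i\inn{m}{x}}=f$ in $L_p$ as $R\rta\infty$, because $(1-|m/R|^2)^\lam_+\rta1$ for each fixed $m$. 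Combining this with the uniform boundedness established above, a routine $\eps/3$ argument upgrades the convergence $\B_R^\lam(f)\rta f$ to all $f\in L_p(L_\infty(\T^2)\tens\M)$.

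The only step that is not pure bookkeeping is the invocation of the operator-valued transference theorem, and there the essential hypothesis---continuity of $m_R$ at every point of $\Z^2$---is exactly what the restriction $\lam>0$ guarantees (at $\lam=0$ the symbol $\chi_{\{|\xi|\le R\}}$ is discontinuous on the sphere and this approach would break down). One should also be careful to use Theorem~\ref{t:21transft} in the form passing from an $\R^2$-multiplier to its \emph{restriction} to the lattice, rather than a periodization version; apart from that, the dilation invariance of $M_p(L_\infty(\R^2)\tens\M)$ and the density of trigonometric polynomials with coefficients in $\S$ are entirely standard.
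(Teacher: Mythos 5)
Your proposal is correct and follows essentially the same route as the paper: note that $m(\xi)=(1-|\xi|^2)_+^\lam$ is continuous (hence regulated) for $\lam>0$, invoke Theorem~\ref{t:21BR} for the uniform $L_p(L_\infty(\R^2)\tens\M)$ multiplier bound, and then transfer to the torus via Theorem~\ref{t:21transft}, with the convergence statement obtained by density. The paper states this in two sentences; you supply the same argument with a bit more detail (and correctly flag that $\lam>0$ is what makes the regulated hypothesis hold), but there is no meaningful difference in method.
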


To prove Theorem \ref{t:21BRtorus}, we should introduce some definitions and lemmas.
\begin{definition}
For $x\in\R^d$, a bounded function $m$ on $\R^d$ taking values in  $\C$ is called regulated at the point $x$ if
\Bes
\lim_{\eps\rta0}\fr{1}{\eps^d}\int_{|t|\leq\eps}(m(x-t)-m(x))dt=0.
\Ees
The function $m$ is called regulated if it is regulated at every $x\in\R^d$.
\end{definition}

The above definition of regulated function was first appeared in \cite{deL65}. Next we introduce the Fourier multiplier on $L_\infty(\T^d)\tens\M$.

\begin{definition}
Given a bounded function $m:\Z^d\rta \C$, we say $\{m(z)\}_{z\in\Z^d}$ is a Fourier multiplier on $L_p(L_\infty(\T^d)\tens\M)$ if the operator $S$ defined by
\Bes
{S(f)}(x)=\sum_{z\in\Z^d}m(z)\widehat{f}(z)e^{2\pi ix\cdot z}
\Ees
where $f:\T^d\rta\M$ and $\widehat{f}(z)=\int_{\T^d}e^{-2\pi izx}f(x)dx$, extends to a bounded operator on $L_p(L_\infty(\T^d)\tens\M)$.
The space of all these multipliers is denoted by $M_p(L_\infty(\T^d)\tens\M)$.
For simplicity, we denote the norm of $L_p(L_\infty(\T^d)\tens\M)$ multiplier by $\|\cdot\|_{M_p(L_\infty(\T^d)\tens\M)}$.
\end{definition}

\begin{theorem}\label{t:21transft}
Let $m$ be a regulated function which lies in $M_p(L_\infty(\R^d)\tens\M)$ for some $1\leq p<\infty$. Then the sequence $\{m(z)\}_{z\in\Z^d}$ belongs to $M_p(L_\infty(\T^d)\tens\M)$ and more precisely
\Bes
\|\{m(z)\}_{z\in\Z^d}\|_{M_p(L_\infty(\T^d)\tens\M)}\leq \|m\|_{M_p(L_\infty(\R^d)\tens\M)}.
\Ees
Moreover for all $R>0$, the sequence $\{m(z/R)\}_{z\in\Z^d}$ lies in $M_p(L_\infty(\T^d)\tens\M)$ and
\Bes
\sup_{R>0}\|\{m(z/R)\}_{z\in\Z^d}\|_{M_p(L_\infty(\T^d)\tens\M)}\leq \|m\|_{M_p(L_\infty(\R^d)\tens\M)}.
\Ees
\end{theorem}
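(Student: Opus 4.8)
The plan is to run the operator-valued version of de~Leeuw's transference argument, using the trace to reduce the limiting steps to scalar computations. Since trigonometric polynomials with coefficients in $\S$ are dense in $L_p(L_\infty(\T^d)\tens\M)$ for $1\le p<\infty$, it suffices to prove
\Bes
\|S(f)\|_{L_p(L_\infty(\T^d)\tens\M)}\le\|m\|_{M_p(L_\infty(\R^d)\tens\M)}\,\|f\|_{L_p(L_\infty(\T^d)\tens\M)}
\Ees
for every $f(x)=\sum_{z\in F}a_z e^{2\pi i z\cdot x}$ with $F\subset\Z^d$ finite and $a_z\in\S$, where $S=S_{\{m(z)\}}$. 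The second assertion is then automatic: if $m$ is regulated so is $m(\cdot/R)$, and since the dilation $D_R\colon g\mapsto g(\cdot/R)$ satisfies $T_{m(\cdot/R)}=D_{1/R}T_mD_R$ together with $\|D_Rg\|_{L_p(L_\infty(\R^d)\tens\M)}=R^{d/p}\|g\|_{L_p(L_\infty(\R^d)\tens\M)}$, one gets $\|m(\cdot/R)\|_{M_p(L_\infty(\R^d)\tens\M)}=\|m\|_{M_p(L_\infty(\R^d)\tens\M)}$; now apply the first part to $m(\cdot/R)$.

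First I would reduce to the case that $m$ is smooth with bounded derivatives. Given a nonnegative $\Phi\in C_0^\infty(\R^d)$ with $\int\Phi=1$, put $\Phi_\eps=\eps^{-d}\Phi(\cdot/\eps)$ and $m^\eps=m*\Phi_\eps$. From the identity $T_{m^\eps}=\int_{\R^d}\Phi_\eps(\eta)\,M_{\eta}T_mM_{-\eta}\,d\eta$, where $M_\eta$ denotes modulation by $e^{2\pi i\eta\cdot x}$ (an isometry of $L_p(L_\infty(\R^d)\tens\M)$), one obtains $\|m^\eps\|_{M_p(L_\infty(\R^d)\tens\M)}\le\|m\|_{M_p(L_\infty(\R^d)\tens\M)}$; moreover $m^\eps$ is $C^\infty$ with bounded derivatives, $\sup_\eps\|m^\eps\|_\infty\le\|m\|_\infty$, and, $m$ being regulated, $m^\eps(z)\to m(z)$ for each $z\in\Z^d$ as $\eps\to0$ (first averaging over balls, which is exactly the regulatedness hypothesis, then mollifying once more for smoothness). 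Since $S_{\{m^\eps(z)\}}(f)\to S_{\{m(z)\}}(f)$ in $L_p$ for trigonometric polynomials $f$ (a finite sum with $m^\eps(z)a_z\to m(z)a_z$), it is enough to establish the displayed inequality for smooth $m$.

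Next comes the Gaussian transference. Set $\phi_R(x)=e^{-\pi|x|^2/R^2}$ and $g_R=f\,\phi_R\in\mathbf{S}(\R^d)\otimes\S$. Using $\wh{\phi_R}(\xi)=R^de^{-\pi R^2|\xi|^2}$ and the substitution $\xi=z+\eta/R$, one computes
\Bes
T_m(g_R)(x)=\sum_{z\in F}e^{2\pi i z\cdot x}\,v_R^z(x)\,a_z,\qquad v_R^z(x)=\int_{\R^d}m(z+\eta/R)\,e^{-\pi|\eta|^2}\,e^{2\pi i x\cdot\eta/R}\,d\eta.
\Ees
After the change of variables $x=Ry$, each $y\mapsto v_R^z(Ry)$ is the (inverse) Fourier transform of $\eta\mapsto m(z+\eta/R)e^{-\pi|\eta|^2}$; since $m$ is smooth with bounded derivatives and $R\ge1$, this family is Schwartz with bounds uniform in $R$, so $v_R^z(Ry)\to m(z)e^{-\pi|y|^2}$ for every $y$ with a uniform $L_p(dy)$-majorant. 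Combining this with the triangle inequality in $L_p(\M)$ over the finite sum yields $R^{-d/p}\|T_m(g_R)-\phi_R\,S(f)\|_{L_p(L_\infty(\R^d)\tens\M)}\to0$. On the other hand, since $x\mapsto\|f(x)\|_{L_p(\M)}^p$ and $x\mapsto\|S(f)(x)\|_{L_p(\M)}^p$ are continuous and $\Z^d$-periodic, the standard averaging lemma (expand a continuous periodic function in Fourier series, use Riemann--Lebesgue) gives $R^{-d}\|g_R\|_{L_p(L_\infty(\R^d)\tens\M)}^p\to c_p\|f\|_{L_p(L_\infty(\T^d)\tens\M)}^p$ and $R^{-d}\|\phi_R S(f)\|_{L_p(L_\infty(\R^d)\tens\M)}^p\to c_p\|S(f)\|_{L_p(L_\infty(\T^d)\tens\M)}^p$ with the same constant $c_p=\int_{\R^d}e^{-p\pi|y|^2}dy$. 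Feeding $\|T_m(g_R)\|_{L_p(L_\infty(\R^d)\tens\M)}\le\|m\|_{M_p(L_\infty(\R^d)\tens\M)}\|g_R\|_{L_p(L_\infty(\R^d)\tens\M)}$ into $\|\phi_R S(f)\|\le\|T_m(g_R)\|+\|T_m(g_R)-\phi_R S(f)\|$, multiplying by $R^{-d/p}$, and letting $R\to\infty$, the factor $c_p^{1/p}$ cancels and the displayed inequality follows.

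The main obstacle is exactly this limiting step: one needs the error $T_m(g_R)-\phi_R S(f)$ to be $o(R^{d/p})$ in $L_p$, which uses crucially that $m$ is smooth — for merely continuous (let alone merely regulated) $m$ the functions $v_R^z(Ry)$ need not decay in $y$ with a rate uniform in $R$, so dominated convergence fails — and this is why the reduction to smooth symbols in the first step cannot be skipped. Everything genuinely noncommutative here is benign: the $L_p$ norm of an $\M$-valued function $F$ reduces via $\tau$ to the scalar integral $\int_{\R^d}\|F(x)\|_{L_p(\M)}^p\,dx$; modulations and dilations act isometrically up to scalar Jacobians; the triangle inequality in $L_p(\M)$ handles the finite sums; and, since the coefficients $a_z$ lie in $\S$, the relevant $x$-dependent quantities are genuine continuous periodic scalar functions, so the commutative averaging lemma applies unchanged.
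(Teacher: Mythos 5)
Your proof is correct and follows exactly the route the paper has in mind: the paper omits the argument, noting only that it is ``quite similar to that in the commutative case'' and pointing to Grafakos's Theorem 4.3.7, which is precisely the mollification-plus-Gaussian transference scheme you carry out, with the observation that the only genuinely noncommutative points (trace reduces $L_p(\N)$ norms to scalar integrals of $\|\cdot\|_{L_p(\M)}^p$, modulations and dilations stay isometric, finite sums are handled by the triangle inequality) are benign. No discrepancy.
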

\begin{proof}
The proof of this theorem is quite similar to that in the commutative case (see e.g. the proof of Theorem 4.3.7 in \cite{Gra14C}), so we omit the details of the proof here.
\end{proof}

Now we can apply Theorem \ref{t:21BR} and Theorem \ref{t:21transft} to show Theorem \ref{t:21BRtorus}.
Indeed notice that the function $m(z)=(1-|z|^2)_+^\lam$ is continuous on $\R^d$. Hence it is regulated. Now utilizing Theorem \ref{t:21BR} and Theorem \ref{t:21transft}, it is easy to obtain Theorem \ref{t:21BRtorus}.

\subsection{Bochner-Riesz means on quantum tori}\label{s:21appliquan}
In this subsection, we finally consider the Bochner-Riesz means on two-dimensional quantum tori and prove our main results Theorem \ref{t:21BRqtorusi}. We begin by introducing some notation.

Suppose that $d\geq 2$, $\tet=(\tet_{k,j})_{1\leq k,j\leq d}$ is a real skew symmetric $d\times d$ matrix.
The $d$-dimensional noncommutative torus $\A_\tet$ is a universal $C^*$-algebra generated by $d$ unitary operators $U_1,\cdots,U_d$ satisfying the following commutation relation:
\Bes
U_kU_j=e^{2\pi i\tet_{k,j}}U_jU_k,\quad 1\leq k,j\leq d.
\Ees
By the unitary property of $U_k$s, if we multiply $U^*_k$ in both left and right sides of the above equality, we get
\Bes
U_jU^*_k=e^{2\pi i\tet_{k,j}}U_k^*U_j,\quad 1\leq k,j\leq d.
\Ees
For a multi-index $k=(k_1,\cdots,k_d)\in\Z^d$ and $U=(U_1,\cdots,U_d)$, we define $U^k=U_1^{k_1}\cdots U_d^{k_d}$. We call
\Be\label{e:21quanpoly}
f=\sum_{k\in\Z^d}\alp_k U^k,\quad\text{with}\ \  \alp_k\in \C,
\Ee
is a polynomial in $U$ if the sum in \eqref{e:21quanpoly} is finite, i.e. $\alp_k\neq 0$ holds only for finite multi-indexes.
Let $\mathcal{P}_\tet$ be the involution algebra of all such polynomials. Then $\mathcal{P}_\tet$ is dense in $\A_\tet$.
A well-known fact is that $\A_\tet$ admits a faithful tracial state $\tau$ such that $\tau(U_1^{k_1}\cdots U_d^{k_d})=1$ if and only if $k=(k_1,\cdots,k_d)=\textbf{0}$ where $\textbf{0}=(0,\cdots,0)$ (see e.g. \cite{Rie90}).
Hence for any polynomial $f$ with form  \eqref{e:21quanpoly}, we can define
\Bes
\tau(f)=\alp_{\textbf{0}}.
\Ees
Define $\T_\tet^d$ the weak $*$-closure of $\A_\tet$ in the GNS representation of $\tau$.
We call $\T_\tet^d$ the $d$-dimensional quantum torus associated to $\tet$. The state $\tau$ also extends to a normal faithful state on $\T_\tet^d$, which will be denoted again by $\tau$.
Notice that when $\tet=0$, $\A_\tet=C(\T^d)$ and $\T_\tet^d=L_\infty(\T^d)$. Thus quantum torus $\T_\tet^d$ is a deformation of classical torus $\T^d$.

Let $L_p(\T_\tet^d)$ be the noncommutative space associated with $(\T_\tet^d,\tau)$.
Using $\tau$ is a state and H\"older's inequality, we see that $L_q(\T_\tet^d)\subset L_p(\T^d_\tet)$ for $0<p<q<\infty$.
For any $f\in L_1(\T_\tet^d)$, there exists a formal Fourier series
\Bes
f\sim\sum_{m\in\Z^d}\wh{f}(m)U^m
\Ees
where $\wh{f}(m)=\tau((U^m)^*f)$ is called the Fourier coefficient of $f$.
Analogous to the classical analysis, a fundamental problem here is that when the Fourier series
$\sum_{m\in\Z^d}\wh{f}(m)U^m$ converges to $f$.
In the following, we consider the most important Bochner-Riesz means on quantum tori which is defined by
\Be\label{e:21BRqtorus}
\B^\lam_R(f)=\sum_{m\in\Z^d}\big(1-|\tfrac{m}{R}|^2\big)^\lam_+\hat{f}(m)U^m.
\Ee
The Bochner-Riesz means on quantum tori was firstly studied by Z. Chen, Q. Xu and Z. Yin \cite{CXY13}. A fundamental problem raised in \cite[Page 762]{CXY13} is that in which senses $\B^\lam_R(f)$ converge back to $f$. We consider the $L_p$ convergence here. In fact similar to that of the commutative case, one can formulate the following problem of quantum Bochner-Riesz means.
\begin{conj}
Suppose $\lam>0$ and $\fr{2d}{d+1+2\lam}<p<\fr{2d}{d-1-2\lam}$. Consider the Bochner-Riesz means defined in \eqref{e:21BRqtorus}, then we have
\Bes
\sup_{R>0}\|\B_R^\lam(f)\|_{L_p(\T_\tet^d)}\lc\|f\|_{L_p(\T_\tet^d)}.
\Ees
\end{conj}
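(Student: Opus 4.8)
Since the Euclidean Bochner--Riesz conjecture is open in dimensions $d\ge 3$, and the whole machinery of this paper is planar, the proposal below settles only the case $d=2$ of this conjecture, and only in the sense of $L_p$-convergence; this is exactly Theorem \ref{t:21BRqtorusi}. The plan is to reduce the fully noncommutative statement on $\T^2_\tet$ to the operator-valued statement on the usual torus already recorded as Theorem \ref{t:21BRtorus}, by means of the transference trick of \cite{CXY13}.

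\textbf{Step 1 (transference from the quantum torus to the operator-valued torus).} First I would invoke the device of \cite{CXY13}: the torus $\T^d$ acts on $\T^d_\tet$ by the trace-preserving $*$-automorphisms $\sigma_z(U^m)=z^m U^m$, and this action yields a trace-preserving $*$-embedding of $L_p(\T^d_\tet)$ into $L_p(L_\infty(\T^d)\tens\T^d_\tet)$ whose range is the image of a normal conditional expectation, intertwining $\B_R^\lam$ on $\T^d_\tet$ with the operator-valued Bochner--Riesz operator on $L_p(L_\infty(\T^d)\tens\T^d_\tet)$ carrying the same Fourier symbol $(1-|m/R|^2)^\lam_+$. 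Consequently, uniformly in $R>0$,
\[
\big\|\B_R^\lam\big\|_{L_p(\T^2_\tet)\to L_p(\T^2_\tet)}\;\le\;\big\|\B_R^\lam\big\|_{M_p(L_\infty(\T^2)\tens\M)},\qquad \M=\T^2_\tet .
\]

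\textbf{Step 2 (invoke the operator-valued torus result).} For $0<\lam\le\tfrac12$ the right-hand side is bounded for $\tfrac{4}{3+2\lam}<p<\tfrac{4}{1-2\lam}$ by Theorem \ref{t:21BRtorus} applied with $\M=\T^2_\tet$, which is a semifinite (indeed finite) von Neumann algebra through its normal faithful trace. For $\lam>\tfrac12$ the conjectured range of $p$ is $1<p<\infty$, and there the convolution kernel of $(1-|\cdot|^2)^\lam_+$ on $\R^2$ is integrable, so Lemma \ref{l:21bcritical} together with Theorem \ref{t:21transft} gives $\sup_{R>0}\|\B_R^\lam\|_{M_p(L_\infty(\T^2)\tens\M)}\lc 1$ for all $1\le p\le\infty$. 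Combining the two ranges of $\lam$ yields $\sup_{R>0}\|\B_R^\lam(f)\|_{L_p(\T^2_\tet)}\lc\|f\|_{L_p(\T^2_\tet)}$ for every $0<\lam<\infty$ and every $p$ with $\tfrac{4}{3+2\lam}<p<\tfrac{4}{1-2\lam}$.

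\textbf{Step 3 ($L_p$-convergence) and where the real difficulty sits.} Convergence then follows from a routine density argument: on the dense subalgebra $\mathcal{P}_\tet$ of polynomials in $U$ one has $\B_R^\lam(f)=f$ as soon as $R$ exceeds the degree of $f$, and the uniform bound of Step 2 passes this to the limit for arbitrary $f\in L_p(\T^2_\tet)$. The transference steps are soft and mimic the commutative theory verbatim; the genuine content lies upstream, in Theorem \ref{t:21BR}, which itself rests on the sharp $\log N$ bound for the noncommutative Kakeya maximal function (Theorem \ref{t:21Kak}) and on the planar overlap estimate of Lemma \ref{l:21Geo}. These are precisely the ingredients that are not available for $d\ge 3$, and this is why the present approach resolves the conjecture only in two dimensions.
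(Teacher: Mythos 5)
Your proposal is correct and takes essentially the same route as the paper: transfer from $\T_\tet^2$ to the operator-valued torus via the $\pi_x$-embedding of \cite{CXY13} (Lemma~\ref{l:21transq}), invoke Theorem~\ref{t:21BRtorus}, and conclude by density; your explicit handling of $\lam>\tfrac12$ via Lemma~\ref{l:21bcritical} is also how the paper extends from $0<\lam\le\tfrac12$ to all $\lam>0$. One small factual slip in Step~3: for a polynomial $f$ one does \emph{not} have $\B_R^\lam(f)=f$ once $R$ exceeds $\deg f$ (the coefficients $(1-|m/R|^2)^\lam_+$ are strictly less than $1$ for $m\ne0$); rather one has $\B_R^\lam(f)\to f$ as $R\to\infty$ because the sum is finite and each coefficient tends to $1$, which is all the density argument needs.
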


The main result here is to show that the preceding conjecture holds for two dimensions, i.e. Theorem \ref{t:21BRqtorusi}. For the reader's convenience, we  restate this theorem as follows.
\begin{theorem}\label{t:21BRqtorus}
Suppose $0<\lam\leq\fr12$ and $\fr{4}{3+2\lam}<p<\fr{4}{1-2\lam}$. Let the Bochner-Riesz means $\B_R^\lam$ be defined in \eqref{e:21BRqtorus} for $d=2$. Then we have
\Bes
\sup_{R>0}\|\B_R^\lam(f)\|_{L_p(\T_\tet^2)}\lc\|f\|_{L_p(\T^2_\tet)}.
\Ees
Consequently for $f\in L_p(\T_\tet^2)$, $\B_R^\lam(f)$ converges to $f$ in $L_p(\T_\tet^2)$ as $R\rta\infty$.
\end{theorem}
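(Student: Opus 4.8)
The plan is to reduce Theorem \ref{t:21BRqtorus} to the operator-valued result on the usual torus, Theorem \ref{t:21BRtorus}, via the transference trick of \cite{CXY13}. First I record that the norm convergence is a formal consequence of the uniform boundedness: for a polynomial $f=\sum_{m}\alp_m U^m\in\mathcal{P}_\tet$ the sum defining $\B_R^\lam(f)$ is finite and its coefficients tend to those of $f$ as $R\rta\infty$, so $\B_R^\lam(f)\rta f$ in $L_p(\T_\tet^2)$; since $\mathcal{P}_\tet$ is dense in $L_p(\T_\tet^2)$ for $p<\infty$, a standard $\eps/3$ argument using $\sup_{R>0}\|\B_R^\lam\|<\infty$ upgrades this to convergence for every $f\in L_p(\T_\tet^2)$. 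Hence it suffices to establish the uniform bound $\sup_{R>0}\|\B_R^\lam(f)\|_{L_p(\T_\tet^2)}\lc\|f\|_{L_p(\T_\tet^2)}$.

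The key device is the gauge action of $\T^2$ on $\T_\tet^2$: for each $x\in\T^2$ the assignment $U_j\mapsto e^{2\pi i x_j}U_j$ extends to a normal $*$-automorphism $\sigma_x$ of $\T_\tet^2$, and $\tau\circ\sigma_x=\tau$. Define a map $\Phi$ on polynomials by $\Phi(f)(x)=\sigma_x(f)=\sum_m\wh f(m)e^{2\pi i\inn{m}{x}}U^m$, which takes values in $L_\infty(\T^2)\tens\T_\tet^2$. Since each $\Phi(f)(x)$ is a $\tau$-preserving automorphic image of $f$, one has $\|\Phi(f)\|_{L_p(L_\infty(\T^2)\tens\T_\tet^2)}^p=\int_{\T^2}\tau(|\sigma_x(f)|^p)\,dx=\tau(|f|^p)=\|f\|_{L_p(\T_\tet^2)}^p$, so $\Phi$ extends to an isometric embedding $L_p(\T_\tet^2)\rta L_p(L_\infty(\T^2)\tens\T_\tet^2)$. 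Let $S_R$ be the operator-valued Bochner-Riesz means on $L_\infty(\T^2)\tens\T_\tet^2$, i.e. the operator acting on the $\T^2$-variable through the scalar symbol $(1-|m/R|^2)_+^\lam$, which is exactly the operator treated in Theorem \ref{t:21BRtorus} with $\M=\T_\tet^2$. Comparing Fourier expansions in the $\T^2$-variable shows that $S_R\circ\Phi=\Phi\circ\B_R^\lam$; combined with the isometry this gives $\|\B_R^\lam(f)\|_{L_p(\T_\tet^2)}=\|S_R\Phi(f)\|_{L_p(L_\infty(\T^2)\tens\T_\tet^2)}$ for every $f\in\mathcal{P}_\tet$.

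It then remains to bound the right-hand side uniformly in $R$, and here I invoke Theorem \ref{t:21BRtorus} with the von Neumann algebra $\M=\T_\tet^2$ --- which carries a normal faithful tracial state $\tau$, hence is finite and so meets the hypotheses --- to obtain, for $0<\lam\le\fr12$ and $\fr{4}{3+2\lam}<p<\fr{4}{1-2\lam}$, that $\sup_{R>0}\|S_R\|_{L_p(L_\infty(\T^2)\tens\T_\tet^2)\rta L_p(L_\infty(\T^2)\tens\T_\tet^2)}\lc 1$. Chaining this with the previous identity proves $\sup_{R>0}\|\B_R^\lam(f)\|_{L_p(\T_\tet^2)}\lc\|f\|_{L_p(\T_\tet^2)}$ on $\mathcal{P}_\tet$, and the general case follows by density; together with the first paragraph this also yields the $L_p$ convergence. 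The only real obstacle is making the transference rigorous: one must verify that $\sigma_x$ is a well-defined trace-preserving $*$-automorphism of $\T_\tet^2$ depending weak$^*$ measurably on $x$ (so that $\Phi(f)$ genuinely lies in $L_\infty(\T^2)\tens\T_\tet^2$), that $\Phi$ is isometric on each $L_p$, and that it intertwines $\B_R^\lam$ with $S_R$; once this is in place the statement is an immediate consequence of Theorem \ref{t:21BRtorus}.
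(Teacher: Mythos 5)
Your proposal is correct and follows essentially the same route as the paper: the gauge action $\sigma_x$ you define is exactly the automorphism $\pi_x$ the paper uses, and $\Phi$ is the paper's isometric embedding $f\mapsto\tilde f$ from Lemma \ref{l:21transq} (taken from \cite{CXY13}), with the intertwining $S_R\circ\Phi=\Phi\circ\B_R^\lam$ and an appeal to Theorem \ref{t:21BRtorus} with $\M=\T_\tet^2$ giving the result. The only cosmetic differences are notational and the order in which the uniform bound and the density limiting argument are presented.
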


The proof is based on a transference technique which is a standard method.
Consider the tensor von Neumann algebra $\N_\tet=L_\infty(\T^d)\tens\T_\tet^d$ equipped with the tensor trace $v=\int_{\T^d} dx\otimes\tau$.
Let $L_p(\N_\tet)$ be the noncommutative $L_p$ space associated with $(\N_\tet,v)$. Observe that
\Bes
L_p(\N_\tet)\cong L_p(\T^d;L_p(\T^d_\tet))
\Ees
where the space on the right hand side is the Bochner $L_p$ space on $\T^d$ with values in $L_p(\T_\tet^d)$.

For every $x\in\Z^d$, we define $\pi_x$ as
\Bes
\pi_{x}(U^k)=e^{2\pi ix\cdot k}U^k=e^{2\pi ix_1k_1}\cdots e^{2\pi ix_dk_d}U^{k_1}\cdots U^{k_d}
\Ees
which is an isomorphism of $\T_\tet^d$. It is easy to see that $\tau(\pi_x(f))=\tau(f)$ for any $x\in\T^d$. Hence $\pi_x$ is trace preserving. Therefore it extends to an isometry on $L_p(\T^d_\tet)$ for $1\leq p<\infty$, i.e.
\Be\label{e:21isome}
\|\pi_x(f)\|_{L_p(\T_\tet^d)}=\|f\|_{L_p(\T^d_\tet)}.
\Ee

The following transference method has been showed by Z. Chen,  Q. Xu and Z. Yin in \cite[Proposition 2.1 \& Corollary 2.2]{CXY13}.

\begin{lemma}\label{l:21transq}
For any $f\in L_p(\T^d_\tet)$, the function $\tilde{f}:x\rta\pi_x(f)$ is continuous from $\T^d$ to $L_p(\T^d_\tet)$ (with respect to the weak $*$-topology for $p=\infty$).
Moreover $\tilde{f}\in L_p(\N_\tet)$ and $\|\tilde{f}\|_{L_p(\N_\tet)}=\|f\|_{L_p(\T^d_\tet)}$ for $1\leq p\leq\infty$. Thus, $f\rta\tilde{f}$ is an isometric embedding from $L_p(\T_\tet^d)$ to $L_p(\N_\tet)$.
\end{lemma}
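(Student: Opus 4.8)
The plan is to establish the continuity assertion first and then read off membership and the norm identity from the isometry \eqref{e:21isome}: the three claims are really a single density argument once each $\pi_x$ is known to be a trace-preserving isometry of $L_p(\T^d_\tet)$.

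First I would check norm continuity on the dense involution algebra $\mathcal P_\tet$ of polynomials. For $f=\sum_k\alp_kU^k$ a finite sum, $\pi_x(f)-\pi_{x_0}(f)=\sum_k\alp_k(e^{2\pi ix\cdot k}-e^{2\pi ix_0\cdot k})U^k$, and since each $U^k$ is unitary we have $\|U^k\|_{L_p(\T^d_\tet)}=1$ (and $\|U^k\|_{\T^d_\tet}=1$), so $\|\pi_x(f)-\pi_{x_0}(f)\|_{L_p(\T^d_\tet)}\leq\sum_k|\alp_k|\,|e^{2\pi ix\cdot k}-e^{2\pi ix_0\cdot k}|\to0$ as $x\to x_0$ because the sum is finite; this covers $1\leq p\leq\infty$ on polynomials. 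For general $f\in L_p(\T^d_\tet)$ with $1\leq p<\infty$, I would approximate $f$ by a polynomial $g$ in $L_p$-norm and use the triangle inequality together with $\|\pi_x(f-g)\|_{L_p(\T^d_\tet)}=\|f-g\|_{L_p(\T^d_\tet)}$ from \eqref{e:21isome} to get $\|\pi_x(f)-\pi_{x_0}(f)\|_{L_p(\T^d_\tet)}\leq2\|f-g\|_{L_p(\T^d_\tet)}+\|\pi_x(g)-\pi_{x_0}(g)\|_{L_p(\T^d_\tet)}$; letting $x\to x_0$ and then shrinking $\|f-g\|_{L_p(\T^d_\tet)}$ yields norm continuity. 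The case $p=\infty$ I would treat by duality: for $\eta$ in the predual $L_1(\T^d_\tet)$, trace-invariance of $\pi_x$ and $\pi_x^{-1}=\pi_{-x}$ give $\tau(\pi_x(f)\eta)=\tau(f\,\pi_{-x}(\eta))$, and the already-proved $p=1$ continuity of $x\mapsto\pi_{-x}(\eta)$ makes $x\mapsto\tau(\pi_x(f)\eta)$ continuous, i.e. $x\mapsto\pi_x(f)$ is weak-$*$ continuous.

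From continuity the rest is immediate. For $1\leq p<\infty$, norm continuity of $x\mapsto\pi_x(f)$ on the compact group $\T^d$ makes it Bochner measurable and bounded, hence Bochner integrable, so $\tilde f\in L_p(\T^d;L_p(\T^d_\tet))\cong L_p(\N_\tet)$; then $\|\tilde f\|_{L_p(\N_\tet)}^p=\int_{\T^d}\|\pi_x(f)\|_{L_p(\T^d_\tet)}^p\,dx=\|f\|_{L_p(\T^d_\tet)}^p$ by \eqref{e:21isome} and $|\T^d|=1$. For $p=\infty$, weak-$*$ continuity gives weak-$*$ measurability, so $\tilde f$ determines an element of $L_\infty(\T^d)\tens\T^d_\tet=L_\infty(\N_\tet)$, and $\|\tilde f\|_{L_\infty(\N_\tet)}=\esssup_x\|\pi_x(f)\|_{\T^d_\tet}=\|f\|_{L_\infty(\T^d_\tet)}$ again by \eqref{e:21isome}. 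Linearity of $f\mapsto\tilde f$ is clear, so it is an isometric embedding $L_p(\T^d_\tet)\hookrightarrow L_p(\N_\tet)$ for every $1\leq p\leq\infty$.

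I expect the only point requiring care to be the $p=\infty$ case: one must interpret ``continuous with respect to the weak-$*$ topology'' through the predual $L_1(\T^d_\tet)$ and verify that a weak-$*$ continuous, hence weak-$*$ measurable, essentially bounded function $\T^d\to\T^d_\tet$ genuinely represents an element of the von Neumann algebra $L_\infty(\N_\tet)$. Everything else rests only on the density of $\mathcal P_\tet$ and the isometry property \eqref{e:21isome}, both already available.
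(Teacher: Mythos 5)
Your argument is correct. Note that the paper itself gives no proof of this lemma: it simply cites \cite{CXY13} (Proposition 2.1 and Corollary 2.2), and your write-up is essentially the standard argument behind that citation --- verify norm continuity on the dense polynomial algebra $\mathcal P_\tet$ using that each $U^k$ has $L_p$-norm one, extend by a $3\varepsilon$/density argument via the isometry \eqref{e:21isome} for $1\leq p<\infty$, handle $p=\infty$ by pairing against the predual through $\tau(\pi_x(f)\eta)=\tau(f\,\pi_{-x}(\eta))$, and then read off membership and the norm identity from $L_p(\N_\tet)\cong L_p(\T^d;L_p(\T^d_\tet))$ together with \eqref{e:21isome}. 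You also correctly single out the only genuinely delicate step, namely that a weak-$*$ continuous, bounded $\T^d_\tet$-valued function represents an element of $L_\infty(\T^d)\tens\T^d_\tet$; this is the standard identification of that tensor product with essentially bounded weak-$*$ measurable functions, so no gap remains.
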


\begin{proof}[{Proof of Theorem \ref{t:21BRqtorus}}]
The proof just follows from Theorem \ref{t:21BRtorus} and Lemma \ref{l:21transq}.
In fact, by the density argument, it suffices to consider $f$ as a polynomial $\sum_{k\in\Z^2}\hat{f}(k) U^k$.  Define $\tilde{f}$ in Lemma \ref{l:21transq}. Then $\tilde{f}\in L_p(\N_\tet)$ and $\|\tilde{f}\|_{L_p(\N_\tet)}=\|f\|_{L_p(\T^2_\tet)}$ by Lemma \ref{l:21transq}.
Using Theorem \ref{t:21BRtorus}, we get
\Be\label{e:21suprqu}
\sup_{R>0}\|\B_R^\lam(\tilde{f})\|_{L_p(\T^2;L_p(\T_\tet^2))}\lc\|\tilde{f}\|_{L_p(\T^2;L_p(\T_\tet^2))}.
\Ee
On the other hand, it is easy to see $\widehat{\tilde{f}}(k)=\wh{f}(k)U^k$. According the definition of Bochner-Riesz means,
\Bes
\begin{split}
\B^\lam_R(\tilde{f})(x)=\sum_{k\in\Z^2}\big(1-\big|\tfrac{k}{R}\big|^2\big)_+^\lam\widehat{\tilde{f}}(k)e^{2\pi ik\cdot x}=\sum_{k\in\Z^2}\big(1-\big|\tfrac{k}{R}\big|^2\big)_+^\lam\widehat{f}(k)e^{2\pi ik\cdot x}U^k
=\pi_x[\B^\lam_R(f)].
\end{split}
\Ees
This, together with \eqref{e:21isome} and \eqref{e:21suprqu}, implies the desired estimate in Theorem \ref{t:21BRqtorus}. The convergence follows from the standard limiting argument.
\end{proof}
\vskip0.24cm

\appendix
\section{Interpolation of analytic families of operators on noncommutative $L_p$ spaces}

In this appendix, we state  precisely an analytic interpolation theorem which may be known to experts.
Let $\S$ be the linear span of all $x\in\M_{+}$ whose support projections have finite trace.
Suppose that $T_z$ is a linear operator mapping $\mathcal{S}$ to itself for every  $z$ in the closed strip $\bar{S}=\{z\in\C:0\leq \text{Re} z\leq1\}$.
We say the family $\{T_z\}_z$ is analytic if the function
$$z\rta\tau(gT_z(f))$$
is analytic in the open strip $S=\{z\in\C:0<\Re z<1\}$ and continuous on $\bar{S}$ for any functions $f$ and $g$ in $\mathcal{S}$.
Moreover we say the analytic family $\{T_z\}_z$ is of admissible growth if there exists a constant $0<a<\pi$ such that
\Bes
e^{-a|\Im z|}\log|\tau(gT_z(f))|<\infty
\Ees
for all $z\in\bar{S}$. Now we can state the following analytic interpolation theorem.
\begin{theorem}\label{t:21inter}
Suppose that $T_z$ is an analytic family of linear operators of admissible growth. Let $p_0,p_1,q_0,q_1\in(0,\infty)$ and assume that $M_0,M_1$ are positive functions on $\R$ such that
\Be
\sup_{y\in\R}e^{-b|y|}\log M_j(y)<\infty
\Ee
for $j=0,1$ and some $b\in(0,\pi)$. Let $p,\tet$ satisfy $\fr{1}{p}=\fr{1-\tet}{p_0}+\fr{\tet}{p_1}$ and $\fr{1}{q}=\fr{1-\tet}{q_0}+\fr{\tet}{q_1}$.
Suppose that
\Be
\begin{split}
\|T_{iy}(f)\|_{L_{q_0}(\M)}&\leq M_0(y)\|f\|_{L_{p_0}(\M)},\quad
\|T_{1+iy}(f)\|_{L_{q_1}(\M)}\leq M_1(y)\|f\|_{L_{p_1}(\M)}
\end{split}
\Ee
hold for all $f\in\mathcal{S}$. Then for any $\tet\in(0,1)$, we have
\Bes
\|T_{\tet}(f)\|_{L_q(\M)}\leq M(\tet)\|f\|_{L_p(\M)},
\Ees
where for $0<t<1$,
\Bes
M(t)=\exp\Big\{\fr{\sin(\pi t)}{2}\int_{-\infty}^{\infty}\Big[\fr{\log M_0(y)}{\cosh(\pi y)-\cos(\pi t)}+\fr{\log M_1(y)}{\cosh(\pi y)+\cos(\pi t)}\Big]dy\Big\}.
\Ees
\end{theorem}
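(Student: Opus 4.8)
The plan is to run Stein's interpolation argument for analytic families of operators, the only genuinely noncommutative ingredients being H\"older's inequality, the duality $L_q(\M)^*=L_{q'}(\M)$, and the Borel functional calculus used to build the interpolating families; all the complex analysis takes place with a single scalar-valued function on the strip $\bar S$. First I would reduce to a pairing estimate: fix $f\in\mathcal S$ with $\|f\|_{L_p(\M)}=1$, and, for $q\ge1$ (the range needed in this paper), use duality to reduce the claim to $|\tau(gT_\tet(f))|\le M(\tet)$ uniformly over $g\in\mathcal S$ with $\|g\|_{L_{q'}(\M)}\le1$. Next, setting $\alpha(z)=p\big(\tfrac{1-z}{p_0}+\tfrac z{p_1}\big)$ and $\beta(z)=q'\big(\tfrac{1-z}{q_0'}+\tfrac z{q_1'}\big)$, so that $\alpha(\tet)=\beta(\tet)=1$, and using the polar decompositions $f=v|f|$, $g=w|g|$, I would define $f_z=v|f|^{\alpha(z)}$ and $g_z=w|g|^{\beta(z)}$ via functional calculus. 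A spectral-truncation and density argument (treating first the dense subclass where $|f|,|g|$ are invertible on their support projections, then passing to the limit) shows that $z\mapsto f_z,g_z$ are analytic $\mathcal S$-valued maps on $\bar S$, that $f_\tet=f$ and $g_\tet=g$, and that on the two edges $\|f_{iy}\|_{L_{p_0}(\M)}=\|g_{iy}\|_{L_{q_0'}(\M)}=1$ and $\|f_{1+iy}\|_{L_{p_1}(\M)}=\|g_{1+iy}\|_{L_{q_1'}(\M)}=1$, since $\Re\alpha(iy)=p/p_0$, $\Re\alpha(1+iy)=p/p_1$ and similarly for $\beta$.

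Then I would study the scalar function $F(z)=\tau\big(g_zT_z(f_z)\big)$. Because $\{T_z\}$ is analytic of admissible growth and $z\mapsto f_z,g_z$ are analytic with at most single-exponential growth of their norms in $|\Im z|$, the function $F$ is analytic on the open strip, continuous on $\bar S$, and of admissible growth; combined with $\sup_y e^{-b|y|}\log M_j(y)<\infty$ this places $\log|F|$ in the class to which the Poisson representation on the strip applies. By H\"older's inequality and the endpoint hypotheses, $|F(iy)|\le\|g_{iy}\|_{L_{q_0'}(\M)}\|T_{iy}(f_{iy})\|_{L_{q_0}(\M)}\le M_0(y)\|f_{iy}\|_{L_{p_0}(\M)}=M_0(y)$, and likewise $|F(1+iy)|\le M_1(y)$. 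Finally, the classical Poisson--Jensen inequality on the strip---subharmonicity of $\log|F|$ together with the explicit Poisson kernel of $S$, transported from the disc by a conformal map---gives
\[
\log|F(\tet)|\le\int_{-\infty}^{\infty}\frac{\sin(\pi\tet)\log|F(iy)|}{2\big(\cosh(\pi y)-\cos(\pi\tet)\big)}\,dy+\int_{-\infty}^{\infty}\frac{\sin(\pi\tet)\log|F(1+iy)|}{2\big(\cosh(\pi y)+\cos(\pi\tet)\big)}\,dy.
\]
Inserting the boundary bounds and exponentiating yields $|F(\tet)|\le M(\tet)$ with exactly the stated constant; since $F(\tet)=\tau(gT_\tet(f))$, taking the supremum over $g$ finishes the argument.

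The hard part is bookkeeping rather than conceptual. First, one must carefully verify that $z\mapsto f_z$ is analytic as an $\mathcal S$-valued (equivalently, weakly) function and that $F$ genuinely lies in the admissible-growth class after the conformal change of variables, which is precisely where the spectral truncation and the density reduction are used, and where one must handle the boundary exponents with some care. Second, one must record the Poisson representation on the strip in exactly the normalization that produces the kernels above, and hence the formula for $M(\tet)$. Beyond these points the proof is parallel to the commutative one, since no structure of $\M$ other than H\"older's inequality and the $L_q$--$L_{q'}$ duality is invoked.
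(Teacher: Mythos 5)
Your proposal follows essentially the same route as the paper: reduce via duality to bounding $|\tau(g\,T_\theta(f))|$, build the interpolating families $f_z,g_z$ by functional calculus on the polar decompositions with exponents linear in $z$ (after a density reduction to ``finite spectrum'' elements of $\mathcal S$), verify that the $L_{p_j}$, $L_{q_j'}$ norms are $1$ on the edges, bound $F(z)=\tau(g_zT_z(f_z))$ there by H\"older plus the hypotheses, and close with the Poisson/three-lines inequality on the strip; the paper quotes exactly this strip estimate from Stein--Weiss (\cite[p.~206, Lemma~4.2]{SW71}) rather than re-deriving it from the disc Poisson kernel. The only point worth flagging is that the duality step silently assumes $q\ge1$, which you note and which the paper does not comment on; this does not affect the application in the paper.
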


To prove this theorem, we need an extension of the three lines theorem  which could be found in \cite[Page 206, Lemma 4.2]{SW71}.
\begin{lemma}\label{l:21ethreeline}
Suppose that $F$ is analytic on the open strip $S$ and continuous on its closure such that
\Bes
\sup_{z\in\bar{S}}e^{-a|\Im z|}\log |F(z)|<\infty
\Ees
for some $a\in(0,\pi)$. Then for any $0<x<1$, we have
\Bes
|F(x)|\leq\exp\Big\{\fr{\sin(\pi x)}{2}\int_{-\infty}^{\infty}\Big[\fr{\log |F(iy)|}{\cosh(\pi y)-\cos(\pi x)}+\fr{\log |F(1+iy)|}{\cosh(\pi y)+\cos(\pi x)}\Big]dy\Big\}.
\Ees
\end{lemma}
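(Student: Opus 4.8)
The plan is to prove the lemma by the classical device of conformal mapping combined with a correction factor that forces decay at the two ends of the strip, so that the subharmonic function $\log|F|$ can be majorized by the Poisson integral of its boundary values. First I would record the Poisson kernel of the strip $S=\{0<\Re z<1\}$ at an interior real point $x\in(0,1)$. The map $w=e^{\pi i z}$ sends $S$ conformally onto the upper half-plane $\{\Im w>0\}$, carrying the line $\Re z=0$ onto the positive real axis, the line $\Re z=1$ onto the negative real axis, and the point $z=x$ to $w_0=e^{\pi i x}$ (so $\Im w_0=\sin\pi x$, $\Re w_0=\cos\pi x$). Transporting the half-plane Poisson kernel $\frac1\pi\frac{\Im w_0}{|t-w_0|^2}$ through this map and substituting $t=e^{-\pi y}$ on the positive axis and $t=-e^{-\pi y}$ on the negative axis, a routine computation gives the harmonic measures
\Bes
d\omega_0(y)=\fr{\sin\pi x}{2(\cosh\pi y-\cos\pi x)}\,dy\ \text{ on }\{\Re z=0\},\qquad d\omega_1(y)=\fr{\sin\pi x}{2(\cosh\pi y+\cos\pi x)}\,dy\ \text{ on }\{\Re z=1\},
\Ees
with $\int_{\R}d\omega_0+\int_{\R}d\omega_1=1$ (alternatively one may simply invoke this standard formula). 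These are exactly the kernels appearing in the statement, so it remains to prove $\log|F(x)|\le\int_{\R}\log|F(iy)|\,d\omega_0(y)+\int_{\R}\log|F(1+iy)|\,d\omega_1(y)$, the inequality being read in $[-\infty,\infty)$.

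Next, fix $\beta$ with $a<\beta<\pi$ and $\eps>0$, and set $G_\eps(z)=F(z)\exp\!\big(-\eps\cos(\beta(z-\tfrac12))\big)$. Writing $z=x+iy$ one computes $\Re\cos(\beta(z-\tfrac12))=\cos(\beta(x-\tfrac12))\cosh(\beta y)$, and since $0\le x\le1$ forces $|\beta(x-\tfrac12)|\le\beta/2<\pi/2$, this real part is $\ge\cos(\beta/2)\cosh(\beta y)\ge0$ on all of $\bar S$ and grows like $e^{\beta|y|}$. Three consequences follow. (i) On $\partial S$ the correction term is nonnegative, so $\log|G_\eps(z)|\le\log|F(z)|$ for $z\in\partial S$. (ii) Using the hypothesis in the form $\log|F(z)|\le Ce^{a|\Im z|}$, one gets $\log|G_\eps(z)|\le Ce^{a|y|}-\tfrac{\eps\cos(\beta/2)}{2}e^{\beta|y|}\to-\infty$ uniformly in $x$ as $|y|\to\infty$ (because $\beta>a$); in particular $\log|G_\eps|$ is bounded above on $S$. (iii) At the real point $x$, $\log|G_\eps(x)|=\log|F(x)|-\eps\cos(\beta(x-\tfrac12))$, and the correction tends to $0$ as $\eps\to0^+$.

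Now $\log|G_\eps|$ is subharmonic on $S$, continuous on $\bar S$, bounded above, and tends to $-\infty$ at the two ends. Applying the maximum principle to $\log|G_\eps|$ minus the solution of the Dirichlet problem on the truncated rectangle $S_R=\{0\le\Re z\le1,\ |\Im z|\le R\}$, and letting $R\to\infty$ — the harmonic measure of the two horizontal sides as seen from $x$ being $O(e^{-\pi R})$ while $\log|G_\eps|$ is bounded above there, and the harmonic measure on the vertical sides converging to $d\omega_0$ and $d\omega_1$ — yields $\log|G_\eps(x)|\le\int_{\R}\log|G_\eps(iy)|\,d\omega_0(y)+\int_{\R}\log|G_\eps(1+iy)|\,d\omega_1(y)$, and by (i) the right-hand side is $\le\int_{\R}\log|F(iy)|\,d\omega_0(y)+\int_{\R}\log|F(1+iy)|\,d\omega_1(y)$. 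Combining with (iii) and letting $\eps\to0^+$ gives the asserted inequality, hence the lemma after exponentiating.

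The hard part will be items (ii)--(iii): one must calibrate the correction factor so that it simultaneously decays fast enough at $\pm i\infty$ across the strip to overcome the permitted growth $e^{a|\Im z|}$ of $\log|F|$, remains nonnegative on the two boundary lines so that the passage to the Poisson integral only errs in the favorable direction, and becomes negligible at the interior point $x$ as $\eps\to0$. The factor $\exp(-\eps\cos(\beta(z-\tfrac12)))$ with $a<\beta<\pi$ is exactly what achieves all three; by contrast, the conformal computation of the strip's Poisson kernel and the truncated-rectangle maximum-principle limit are routine, and I would present them briefly.
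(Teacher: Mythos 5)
Your proof is correct: the conformal computation of the strip's harmonic measure, the correction factor $\exp(-\eps\cos(\beta(z-\tfrac12)))$ with $a<\beta<\pi$ (whose real part is bounded below by $\cos(\beta/2)\cosh(\beta y)\ge 0$ and dominates the allowed growth $e^{a|y|}$ of $\log|F|$), and the truncated-rectangle maximum-principle limit together give exactly the stated Poisson-integral bound, with the hypothesis $a<\pi$ also guaranteeing that the limiting boundary integrals are well defined in $[-\infty,\infty)$. The paper does not prove this lemma but cites it from Stein--Weiss (Lemma 4.2, p.~206), and your argument is essentially that classical proof, so there is nothing to add.
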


\begin{proof}[Proof of Theorem \ref{t:21inter}]
The proof is quite similar to that in the commutative case. Let $f,g\in\mathcal{S}$ with polar decompositions $f=u|f|$ and $g=v|g|$.
Without loss of generality, we may suppose that $\|f\|_{L_p(\M)}=1=\|g\|_{L_{q'}(\M)}$. By the duality, to prove our theorem, it suffices to show
\Bes
|\tau(gT_{\tet}(f))|\leq M({\tet}).
\Ees
For $z\in\bar{S}$, define $f(z)=u|f|^{\fr{p(1-z)}{p_0}+\fr{pz}{p_1}}$ and $g(z)=v|g|^{\fr{q'(1-z)}{q'_0}+\fr{q'z}{q'_1}}$ where the continuous functional calculus is defined by complex powers of positive operators. By the density argument, we could suppose that $|f|$ and $|g|$ are linear combinations of mutually orthogonal projections of finite trace, i.e.
\Bes
|f|=\sum_{j=1}^n\alp_je_j,\quad |g|=\sum_{k=1}^m\beta_k\tilde{e}_k
\Ees
where $\alp_j$s, $\beta_k$s are real and $e_j$s, $\tilde{e}_k$s are mutually orthogonal basis. Then
\Bes
f(z)=\sum_{j=1}^n\alp_j^{\fr{p(1-z)}{p_0}+\fr{pz}{p_1}}ue_j.
\Ees
Therefore the function $z\rta f(z)$ is an analytic function on $\C$ taking values in $\M$. Similar properties hold for the function $z\rta g(z)$.
Define
\Bes
F(z)=\tau(g(z)T_z(f(z))).
\Ees
Then we have
\Bes
F(z)=\sum_{j=1}^n\sum_{k=1}^m\alp_j^{\fr{p(1-z)}{p_0}+\fr{pz}{p_1}}
\beta_k^{\fr{q'(1-z)}{q'_0}+\fr{q'z}{q'_1}}\tau(v\tilde{e}_kT_z(ue_j)).
\Ees

By our assumption, $\tau(v\tilde{e}_kT_z(ue_j))$ is analytic. Hence $F(z)$ is an analytic function satisfying the hypothesis of Lemma \ref{l:21ethreeline}.  Recall a property of polar decomposition: $|f|=|f|u^*u=u^*u|f|$, then by the continuous functional calculus of $|f|$, we obtain
\Be\label{e:21contuf}
\om(|f|)=\om(|f|)u^*u=u^*uw(|f|),
\Ee
where $\om$ is a continuous function on $\R_+$.
Since
\Bes
f(iy)=u|f|^{iyp(\fr{1}{p_1}-\fr{1}{p_0})+\fr{p}{p_0}},
\Ees
then by \eqref{e:21contuf}, we get
\Bes
\begin{split}
|f(iy)|^2=f^*(iy)f(iy)=|f|^{-iyp(\fr{1}{p_1}-\fr{1}{p_0})+\fr{p}{p_0}}
u^*u|f|^{iyp(\fr{1}{p_1}-\fr{1}{p_0})+\fr{p}{p_0}}
=|f|^{\fr{2p}{p_0}}.
\end{split}
\Ees
Therefore we get $\|f(iy)\|_{L_{p_0}(\M)}=1$. Similarly $\|f(1+iy)\|_{L_{p_1}(\M)}=1=\|g(iy)\|_{L_{q'_0}(\M)}=\|g(1+iy)\|_{L_{q'_1}(\M)}$.
H\"older's inequality and our assumption show that for all $y\in\R$,
\Bes
\begin{split}
|F(iy)|&\leq\|T_{iy}(f(iy))\|_{L_{q_0}(\M)}\|g(iy)\|_{L_{q'_0}(\M)}\\
&\leq M_0(y)\|f(iy)\|_{L_{p_0}(\M)}\|g(iy)\|_{L_{q'_0}(\M)}=M_0(y).
\end{split}
\Ees
Similarly for all $y\in\R$, $|F(1+iy)|\leq M_1(y)$. Now applying Lemma \ref{l:21ethreeline} with the preceding two estimates and notice that $\cosh(\pi y)=\fr12(e^{\pi y}+e^{-\pi y})\geq1\geq\cos(\pi x)$, we get
\Bes
|\tau(gT_\tet(f))|
=|F(\tet)|\leq M(\tet),
\Ees
which implies the required estimate.
\end{proof}

\subsection*{Acknowledgement} The author would like to thank Guixiang Hong for some helpful suggestions when preparing this paper and the referees for their very careful reading and valuable suggestions.

\vskip1cm

\bibliographystyle{amsplain}
\bibliography{rf}

\end{document}